\newcommand{\nc}{\newcommand}
\DeclareRobustCommand{\SkipTocEntry}[4]{}
\nc{\sff}{\operatorname{II}}
\nc{\QbHm}{\mathsf{Q}_{\beta}^{\Hh}(\mg)}
\nc{\KbHm}{\mathsf{K}_{\beta}^{\Hh}(\mg)}
\nc{\kgss}{{\kg_{\mathsf{ss}}}}
\nc{\Kss}{{\K_{\mathsf{ss}} } }
\nc{\Z}{{\mathsf{Z}}}
\nc{\mug}{{\mu^\ggo}}
\nc{\kgub}{{\kg_{\ug_{\Beta}}}}
\nc{\qgb}{{\qg_\Beta}}
\nc{\qgHb}{{\qg_\Beta^\Hh}}
\nc{\mcal}{\mca_\mathsf{left}}
\nc{\detb}{\det_{\Beta_\mg}}
\nc{\ggl}{\mathfrak{g}}
\nc{\ppm}{\mathfrak{p}}
\nc{\GG}{G}
\nc{\Gm}{\Gl(\mg)}
\nc{\Gg}{\Gl(\ggl)}
\nc{\glgg}{{\mathfrak{gl}(\mathfrak{g})}}
\nc{\GH}{\Gl^{\! \Hh}}
\nc{\GHm}{\Gl^{\! \Hh} (\mg)}
\nc{\GHmk}{\Gl^H(\mg_k)}
\nc{\gHm}{{\mathfrak{gl}^H(\mathfrak{m})}}
\nc{\OHm}{\Or^{\Hh}(\mg)}
\nc{\sogHm}{ {\mathfrak{so}^H(\mathfrak{m})} }
\nc{\Vm}{V(\mg)}
\nc{\Vg}{V(\ggl)}
\nc{\Vmi}{V(\mg_\infty)}
\nc{\Om}{\Or(\mg)}
\nc{\Og}{\Or(\ggl)}
\nc{\Symm}{{\rm Sym}(\mg)}
\nc{\Symg}{{\rm Sym}(\ggl)}
\nc{\gm}{\mathfrak{gl}(\mg)}
\nc{\som}{\sog(\mg)}
\nc{\sogg}{{\mathfrak{so}(\mathfrak{g})}}
\nc{\hml}{{\mu^{\ggl}}}
\nc{\ml}{{\mu^{\ggl}_{\mg}}}
\nc{\Ol}{{\mathcal{O}_{\ggl}}}
\nc{\OlH}{{\mathcal{O}^H_{\ggl}}}
\nc{\Vn}{V(n)}
\nc{\SymV}{{\rm Sym}(V)}
\nc{\mub}{{\bar \mu}}
\nc{\mumg}{{\mu_\mg}}
\nc{\Betam}{{\Beta_\mg}}
\nc{\Betag}{{\Beta_\ggo}}
\nc{\Betagp}{{\Beta_\ggo^+}}
\nc{\ii}{{\mathrm{i}}}
\nc{\Nrm}{{\mathrm{N}}}
\nc{\Srm}{{\mathrm{S}}}
\nc{\iR}{{\ii\RR}}
\nc{\Vs}{{V(\sg)}}
\nc{\Syms}{{\Sym(\sg)}}
\nc{\glgs}{{\glg(\sg)}}
\nc{\slgb}{{\slg_\Beta}}
\nc{\Vzerog}{V_{\Beta_\ggo^+}^{0}}
\nc{\Vnng}{V_{\Beta_\ggo^+}^{\geq 0}}
\nc{\Vnnssg}{U_{\Beta_\ggo^+}^{\geq 0}}
\nc{\Vzerossg}{U_{\Beta_\ggo^+}^{0}}
\nc{\musol}{{\mu_{\mathsf{sol}}}}
\nc{\Go}{\mathsf{\bar G}} 
\nc{\No}{\mathsf{\bar N}} 
\nc{\Hho}{ \mathsf{\bar H}}
\nc{\ggoo}{\mathfrak{\bar g}}
\nc{\mgo}{{\mathfrak{\bar m}}}
\nc{\hgo}{ \mathfrak{\bar h}}
\nc{\mcvo}{{\operatorname{H}}} 
\nc{\mcvot}{{\operatorname{H}_t}}
\nc{\mcvoz}{\operatorname{ H}_0} 
\nc{\mgos}{\mathfrak{\bar m^*}}
\nc{\ngoo}{\mathfrak{\bar n}}
\nc{\QbHmo}{\mathsf{Q}_{\beta}^{\Hho}(\mgo)}
\nc{\KbHmo}{\mathsf{K}_{\beta}^{\Hh}(\mgo)}
\nc{\Omo}{\Or(\mgo)}
\nc{\Gmo}{\Gl(\mgo)}
\nc{\Gl}{\mathsf{GL}} 	\nc{\Or}{\mathsf{O}}  	\nc{\SO}{\mathsf{SO}}	\nc{\Sl}{\mathsf{SL}}
\nc{\G}{\mathsf{G}} 
	\nc{\K}{\mathsf{K}}  	\nc{\T}{\mathsf{T}} 	\nc{\Ll}{\mathsf{L}}
\nc{\Hh}{\mathsf{H}}
\nc{\Zz}{\mathsf{Z}}
 	\nc{\PPP}{\mathsf{P}}  	\nc{\Ss}{\mathsf{S}} 	\nc{\U}{\mathsf{U}}
\nc{\Aa}{\mathsf{A}} 	\nc{\N}{\mathsf{N}}		\nc{\B}{\mathsf{B}}		\nc{\Rr}{\mathsf{R}}
\nc{\Q}{\mathsf{Q}}
\nc{\Uu}{\mathsf{U}}  \nc{\Vv}{\mathsf{V}}
\nc{\Qb}{\mathsf{Q}_\Beta} 				\nc{\Hb}{\mathsf{H}_\Beta} 			\nc{\Ub}{\mathsf{U}_\Beta} 
\nc{\Gb}{\mathsf{G}_\Beta} 				\nc{\Kb}{\mathsf{K}_\Beta} 			\nc{\Slb}{\Sl_{\Beta}} 
\nc{\Slbm}{\Sl_{\Beta_\mg}^H} 			\nc{\slbm}{\slg_{\Beta_\mg}^H(n)} 	\nc{\Beta}{{\beta}}
\nc{\Alpha}{A}							\nc{\Vr}{V_{\Beta^+}^{r}}   		\nc{\Vzero}{V_{\Beta^+}^{0}}
\nc{\Vnn}{V_{\Beta^+}^{\geq 0}}			\nc{\Vnnss}{U_{\Beta^+}^{\geq 0}}	\nc{\Vzeross}{U_{\Beta^+}^{0}}
\nc{\Vnnt}{V_{\tilde\Beta^+}^{\geq 0}}	\nc{\Betap}{ {\Beta + \Vert{\Beta}\Vert^2 \Id} }
\nc{\Ap}{ {A + \Vert{A}\Vert^2 \Id} }
\nc{\Gs}{{\Gl(\sg)}}  \nc{\Os}{{\Or(\sg)}}
\nc{\gsol}{{g_{\mathsf{sol}}}} \nc{\bgsol}{{\bar g_{\mathsf{sol}}}}
\nc{\GGs}{S}
\nc{\ggs}{\mathfrak{s}}
\nc{\ggo}{\mathfrak{g}}
\nc{\ggob}{\overline{\mathfrak{g}}}
\nc{\lamg}{\Lambda^2\ggo^*\otimes\ggo}
\nc{\gkp}{(\ggo=\kg\oplus\pg,\ip)} \nc{\ukh}{(\ug=\kg\oplus\hg,\ip)}
\nc{\tgkp}{(\tilde{\ggo}=\kg\oplus\pg,\ip)}
\nc{\fg}{\mathfrak{f}}  	\nc{\vg}{\mathfrak{v}} 		\nc{\wg}{\mathfrak{w}} 		\nc{\zg}{\mathfrak{z}} 
\nc{\ngo}{\mathfrak{n}} 	\nc{\kg}{\mathfrak{k}} 		
\nc{\mg}{\mathfrak{m}}
\nc{\bg}{\mathfrak{b}}
\nc{\sog}{\mathfrak{so}} 	\nc{\sug}{\mathfrak{su}} 	\nc{\spg}{\mathfrak{sp}} 	\nc{\slg}{\mathfrak{sl}}
\nc{\glg}{\mathfrak{gl}} 	\nc{\cg}{\mathfrak{c}} 		\nc{\rg}{\mathfrak{r}}  	
\nc{\hg}{\mathfrak{h}} 
\nc{\tgo}{\mathfrak{t}} 	\nc{\ug}{\mathfrak{u}} 		\nc{\dg}{\mathfrak{d}} 		\nc{\ag}{\mathfrak{a}} 
\nc{\pg}{\mathfrak{p}} 		\nc{\sg}{\mathfrak{s}} 		\nc{\affg}{\mathfrak{aff}} 	\nc{\qg}{\mathfrak{q}} 
\nc{\eg}{\mathfrak{e}}		\nc{\Xg}{\mathfrak{X}} 		\nc{\lgo}{\mathfrak{l}} 	\nc{\tg}{\mathfrak{t}}
\nc{\pca}{\mathcal{P}} 		\nc{\nca}{\mathcal{N}} 		\nc{\lca}{\mathcal{L}} 		\nc{\oca}{\mathcal{O}} 
\nc{\mca}{\mathcal{M}} 		\nc{\tca}{\mathcal{T}} 		\nc{\aca}{\mathcal{A}} 		\nc{\cca}{\mathcal{C}} 
\nc{\gca}{\mathcal{G}} 		\nc{\sca}{\mathcal{S}} 		\nc{\hca}{\mathcal{H}} 		\nc{\bca}{\mathcal{B}} 
\nc{\dca}{\mathcal{D}} 		\nc{\fca}{\mathcal{F}} 		\nc{\Qca}{\mathcal{Q}}
\nc{\im}{\mathtt{i}}    \renewcommand{\Im}{{\rm Im}}
\nc{\RR}{{\mathbb R}} \nc{\HH}{{\mathbb H}} \nc{\CC}{{\mathbb C}} \nc{\ZZ}{{\mathbb Z}}
\nc{\FF}{{\mathbb F}} \nc{\NN}{{\mathbb N}} \nc{\QQ}{{\mathbb Q}} \nc{\PP}{{\mathbb P}}
\nc{\KK}{{\mathbb K}}
\nc{\vs}{\vspace{.2cm}} \nc{\vsp}{\vspace{1cm}} 
\nc{\ip}{{\langle \,\cdot \,,\cdot \,\rangle }}
 \nc{\la}{\langle} \nc{\ra}{\rangle} \nc{\unm}{\tfrac{1}{2}}
\nc{\unc}{\tfrac{1}{4}} \nc{\und}{\tfrac{1}{16}} \nc{\no}{\vs\noindent}
\nc{\lam}{\Lambda^2(\RR^n)^*\otimes\RR^n} \nc{\tangz}{{\rm T}^{\rm Zar}}
\nc{\nor}{{\sf n}}  \nc{\mum}{/\!\!/} \nc{\kir}{/\!\!/\!\!/}
\nc{\Ri}{\tfrac{4\Ric_{\mu}}{||\mu||^2}} \nc{\ds}{\displaystyle}
\nc{\ben}{\begin{enumerate}} \nc{\een}{\end{enumerate}} \nc{\f}{\frac}
\nc{\lb}{{[\cdot,\cdot]}} \nc{\isn}{\tfrac{1}{||v||^2}}
\nc{\wt}{\widetilde}
\nc{\raw}{\rightarrow} \nc{\lraw}{\longrightarrow} \nc{\hqn}{\mathcal{H}_{q,n}}
\nc{\minimatrix}[4]{\left[\begin{smallmatrix} {#1} & {#2} \\ {#3} & {#4} \end{smallmatrix}\right]}
\nc{\twomatrix}[4]{\left[\begin{array}{cc} {#1} & {#2} \\ {#3} & {#4} \end{array} \right]}
\nc{\threematrix}[9]{\left[\begin{array}{ccc} {#1} & {#2} & {#3} \\ {#4} & {#5} & {#6}\\ {#7} & {#8} & {#9} \end{array} \right]}
\nc{\mut}{\tilde{\mu}} \nc{\mur}{{\mu_r}} \nc{\mutr}{{\tilde{\mu}_r}}
\nc{\alert}{\color{blue}}
\nc{\ad}{\operatorname{ad}}  	\nc{\Aut}{\operatorname{Aut}}   	\nc{\Inn}{\operatorname{Inn}}   
\nc{\Lie}{\operatorname{Lie}} 	\nc{\Ad}{\operatorname{Ad}} 		\nc{\Der}{\operatorname{Der}} 
\nc{\rad}{\operatorname{rad}} 	\nc{\kf}{\operatorname{B}} 
\nc{\End}{\operatorname{End}} \nc{\rank}{\operatorname{rank}} \nc{\Ker}{\operatorname{Ker}} \nc{\tr}{\operatorname{tr}} \nc{\Sym}{\operatorname{Sym}} \nc{\diag}{\operatorname{diag}} \nc{\proy}{\operatorname{pr}} \nc{\Adj}{\operatorname{Adj}} \nc{\proj}{\operatorname{pr}} \nc{\Id}{{\operatorname{Id}}} \nc{\Span}{\operatorname{span}}  \nc{\spec}{{\operatorname{sp}}}
\nc{\Hess}{\operatorname{Hess}}  		\nc{\dif}{\operatorname{d}} 	\nc{\sen}{\operatorname{sen}} 
\nc{\grad}{\operatorname{grad}} 		\nc{\Order}{\operatorname{O}} 	\nc{\divg}{\operatorname{div}}
\nc{\dd}{{\rm d}}  						\nc{\ddt}{\tfrac{{\rm d}}{{\rm d}t}}        
\nc{\dds}{\tfrac{{\rm d}}{{\rm d}s}}	\nc{\ddtbig}{\frac{{\rm d}}{{\rm d}t}}      
\nc{\dpar}{\tfrac{\partial}{\partial t}}    
\nc{\Iso}{\operatorname{Iso}} 	\nc{\Diff}{\operatorname{Diff}} \nc{\Rc}{\operatorname{Rc}} 
\nc{\Ricci}{\operatorname{Ric}}
\nc{\ric}{\operatorname{ric}}  \nc{\ricci}{\operatorname{ric}} 
\nc{\riccim}{\operatorname{ric}^\star} 
\nc{\Riem}{\operatorname{Rm}} \nc{\scal}{\operatorname{scal}} \nc{\scalm}{\operatorname{scal}^\star} \nc{\Riccim}{\operatorname{Ric}^{\star}} \nc{\tang}{\operatorname{T}} \nc{\vol}{\operatorname{vol}} 
\nc{\mcv}{{\operatorname{H}}} 
\nc{\inj}{\operatorname{inj}}
\nc{\isog}{\mathfrak{iso}}
\nc{\mm}{\operatorname{M}} \nc{\CH}{\operatorname{CH}} \nc{\Irr}{\operatorname{Irr}} \nc{\mcc}{\operatorname{mcc}} \nc{\Sb}{\mathcal{S}_\Beta} \nc{\mmm}{\operatorname{m}}   
\nc{\zero}{ {\backslash \{0\} } }
\nc{\normmm}{{\rm F}}
\nc{\ipp}{\la\,\cdot \,,\cdot\,\ra^*_g}
\nc{\ippk}{\la\,\cdot \,,\cdot\,\ra^*_{g_k}}
\nc{\ippi}{\la\,\cdot \,,\cdot\,\ra^*_{g_\infty}}
\nc{\ipnew}{\la \la \cdot , \cdot \ra\ra}
\nc{\II}{{\mathbb I}}
\theoremstyle{plain}
\newtheorem{theorem}{Theorem}[section]
\newtheorem{proposition}[theorem]{Proposition}
\newtheorem{corollary}[theorem]{Corollary}
\newtheorem{lemma}[theorem]{Lemma}
\newtheorem{teointro}{Theorem}
\newtheorem{corintro}[teointro]{Corollary}
\theoremstyle{definition}
\newtheorem{definition}[theorem]{Definition}
\theoremstyle{remark}
\newtheorem{remark}[theorem]{Remark}
\begin{document}
\begin{titlepage}

\title{Homogeneous Einstein metrics on Euclidean spaces are Einstein solvmanifolds}

\author{Christoph B\"ohm}	
\address{University of M\"unster, Einsteinstra{\ss}e 62, 48149 M\"unster, Germany}
\email{cboehm@math.uni-muenster.de}

\author{Ramiro A.~ Lafuente} 
\address{School of Mathematics and Physics, The University of Queensland, St Lucia QLD 4072, Australia}
\email{r.lafuente@uq.edu.au}

\begin{abstract}
We show that 
 homogeneous Einstein metrics on Euclidean spaces
 are  Einstein solv\-manifolds,
using that they admit  
periodic,  integrally minimal foliations
by homogeneous hypersurfaces.  For the geometric flow induced  by the orbit-Einstein condition,  we construct a Lyapunov function
based on curvature estimates which come from real GIT.
\end{abstract}

\end{titlepage}


\maketitle
\setcounter{page}{1}
\setcounter{tocdepth}{0}

A Riemannian manifold $(M^n,g)$ is called Einstein, if its Ricci tensor
satisfies $\ric_g=\lambda\cdot g$, for some Einstein constant $\lambda \in \RR$.  As is well known, for $n\leq 3$ this implies constant sectional curvature. Topological obstructions to the existence
of compact Einstein $4$-manifolds are known, see \cite{Tho1969}, \cite{Hit1974}, \cite{Leb1}, \cite{Leb2} and \cite{And2010}.   In dimensions $n\geq 5$,
compact simply-connected Einstein manifolds
with positive Einstein constant are also topologically obstructed, see \cite{Sto},
whereas for  negative Einstein constant no such obstruction is known. 

Many examples of compact Einstein manifolds have been constructed using bundle, symmetry and holonomy assumptions:
see  \cite{Bss}, \cite{Wang0}, \cite{Wang},  \cite{Joybook}
 and references therein.
 Among many others, we mention homogeneous Einstein metrics \cite{WZ86}, \cite{BWZ}, 
Einstein metrics on  spheres \cite{Bohm98}, \cite{BGK}, \cite{FH2017}, Ricci-flat manifolds with holonomy
$\G_2$ and ${\rm Spin}(7)$ \cite{Joy1}, \cite{Joy2},
 and   K\"ahler-Einstein manifolds, whose classification could be completed recently
\cite{Aub}, \cite{Yau},
\cite{CDS1},\cite{CDS2}, \cite{CDS3}, \cite{Tian2015}.

Non-compact Einstein manifolds have non-positive Einstein constant
and, in contrast to the compact case,
no topological obstruction is known in dimensions $n\geq 4$. 
As in the compact case, there exist plenty of examples, see
\cite{Bry}, \cite{BrSa}, \cite{AKL}, \cite{Biq2013}, \cite{Biq2017} and  the survey articles mentioned above. These include irreducible non-compact symmetric spaces, and more generally non-compact homogeneous Einstein spaces. The latter are flat for zero Einstein constant 
by \cite{AlkKml}, and for 
negative Einstein constant, Alekseevskii's conjecture states that $M^n$ is  
 diffeomorphic to  
 Eucli\-dean space $\RR^n$:  see \cite{Alek75}, \cite{Bss}.


The main result of this paper is 

\begin{teointro}\label{thm_main}
Homogeneous Einstein metrics on  $\RR^n$  are isometric to
Einstein solvmanifolds.
\end{teointro}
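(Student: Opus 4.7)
Let $(M^n,g)$ be a homogeneous Einstein manifold diffeomorphic to $\RR^n$. Since $\RR^n$ is simply connected and non-compact, Bonnet--Myers rules out a positive Einstein constant, and the zero case is flat by \cite{AlkKml}, so we may assume the Einstein constant is strictly negative. Write $M = \G/\Hh$ with $\G$ a connected transitive group of isometries and $\Hh$ compact, fix a reductive decomposition $\ggo = \hg \oplus \mg$, and encode $g$ as an $\Hh$-invariant inner product on $\mg$. The strategy is to exhibit a canonical codimension-one foliation $\mathcal{F}$ of $M$ by $\G$-homogeneous hypersurfaces, use it to convert the Einstein equation into a flow on the Lie bracket data, and construct a Lyapunov function for the flow, based on real-GIT curvature estimates, whose rest points are precisely the Einstein solvmanifold brackets.

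\emph{Step 1: Construction of $\mathcal{F}$.} First I would produce the periodic, integrally minimal foliation $\mathcal{F}$ announced in the abstract. The natural candidate is the orbit foliation of a closed codimension-one subgroup $\Q \subset \G$, with the transverse direction integrated by a one-parameter subgroup. One has to verify two further properties: \emph{periodicity}, meaning that transverse translation returns the induced bracket/inner-product data on the leaf to itself up to an $\Hh$-gauge; and \emph{integral minimality}, meaning that the mean curvature $\mcv$ of each leaf averages to zero along the transverse direction. Existence of such a $\Q$ should use contractibility of $M$ together with the algebraic structure of $\G$ (radical and nilradical).

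\emph{Step 2: Orbit-Einstein flow and GIT estimates.} Parameterizing the leaves by $t \in \RR$, the Einstein equation on $(M,g)$ splits into a leafwise \emph{orbit-Einstein} system coupled to transverse Weingarten and mean curvature terms. This gives a geometric flow $t \mapsto \mu_t \in \Vm$ on the $\GHm$-variety of $\Hh$-invariant brackets. Real GIT for the action of $\GHm$ on $\Vm$, via the moment map for a maximal compact subgroup and its Kirwan--Ness stratification, then supplies uniform curvature estimates on each stratum: lower bounds on the norm of the moment map, sharp inequalities for the scalar curvature in terms of the stratum, and an identification of the lowest stratum with Einstein solvmanifold brackets in the sense of Lauret.

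\emph{Step 3: Lyapunov function and rigidity.} Using these estimates I would construct a function $L$ on $\Vm$ whose derivative along the orbit-Einstein flow is nonpositive and vanishes only at solvmanifold brackets. Periodicity of $\mathcal{F}$ combined with monotonicity of $L$ forces $\mu_t$ to return to values arbitrarily close to an $\omega$-limit while still losing Lyapunov value, which is only possible if $\mu_t$ is already critical; this identifies $(M,g)$ isometrically with an Einstein solvmanifold. The main obstacle I anticipate is precisely this step: the construction of $L$ must simultaneously exploit the differential-geometric content of the orbit-Einstein equation and the integral minimality of $\mathcal{F}$, together with the algebraic GIT stratification, in such a way that both monotonicity and the correct critical set are controlled. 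The delicate interplay between periodicity and strict monotonicity is what collapses the dynamics onto the solvmanifold locus, and is the technical core of the argument.
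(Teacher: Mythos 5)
Your plan reproduces the high-level architecture announced in the paper's introduction, but at the two places where the actual work happens it either leaves the decisive idea unspecified or points in the wrong direction. In Step 1, the foliation cannot be produced from ``the radical and nilradical'' of $\G$: in the hardest cases (e.g.\ $\RR^{3k}=\Sl_*(2,\RR)^k$) the radical is trivial and yields nothing, and the whole difficulty sits in the semisimple part. The paper first reduces to a minimal presentation in which $\Ll/\Hh$ is an $\RR^r$-bundle over a Hermitian symmetric space of non-compact type (Proposition \ref{prop_Rnhomognew}), and then takes $\Go$ to be an \emph{Iwasawa subgroup} obtained by deleting $\kg_1$ from one simple factor $\lgo_1=\kg_1\oplus\ag_1\oplus\ngo_1$; this subgroup is far from codimension one in $\G$, but its orbits are hypersurfaces (homogeneous under $\Go$, not under $\G$). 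Periodicity also fails on $\RR^n$ itself, where the orbit space is $\RR$: one must pass to a quotient by a discrete central subgroup $\Gamma\leq Z(\G)\cap\Ll$ so that $Z(\K_1)\simeq S^1$ acts transitively on the orbit space (Proposition \ref{prop_periodicity}). Integral minimality is then a genuine computation (Theorem \ref{thm_minimal}), which in the hyperquadric case requires exhibiting an isometric involution reversing the normal direction, since there the individual orbits need not be minimal.

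The second, more serious gap is the endpoint of Step 3. The rigidity coming from the Lyapunov function $w_\Beta=l_\Beta\, e^{\int_0^t\tr L_s\,ds}$ (non-decreasing by the GIT estimate of Theorem \ref{thm_beta}, periodic by integral minimality, hence constant) does not identify $(M,g)$ with a solvmanifold; it only shows that each leaf is a \emph{standard} homogeneous space, i.e.\ the orthogonal complement of the nilradical of $\Lie(\Go)$ is a subalgebra (Theorem \ref{thm_main2}). There is no ``lowest stratum equal to Einstein solvmanifold brackets'' here: the critical set of the Lyapunov function is the standardness locus. Getting from there to solvability of $\G$ is a separate and essential phase that your plan omits entirely: one applies standardness for Iwasawa subgroups attached to every simple factor to show that the simple factors of $\lgo$ are pairwise orthogonal (Proposition \ref{prop_simpleperp}); this forces the induced metric on $\Ll/\Hh$ to be awesome, and Nikonorov's theorem (Theorem \ref{thm_niko}) then contradicts the orbit-Einstein identities on $\kg_1$ and $\pg_1$ unless the Levi factor is trivial. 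Without this last stage your argument stops at ``the leaves are standard'', which is information of the type of Theorem \ref{thm_Lauret}, not the conclusion of Theorem \ref{thm_main}.
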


Recall that a simply-connected solvmanifold is
a solvable Lie group endowed with a left-invariant metric.
Theorem \ref{thm_main}
 was known for Ricci-flat homogeneous spaces \cite{AlkKml},
 for  homogeneous $\RR$-bundles over irreducible  Hermitian symmetric spaces, see \cite{BB} and
Remark \ref{rem_euclhom},
and in dimensions $n\leq 5$ and $n=7$ \cite{AL16}, leaving open
the case of the $6$-dimensional universal cover $\Sl_*(2,\RR)^2$ 
of $\Sl(2,\RR)^2$. In this direction, we would like to mention 
 that our methods also yield new results beyond Theorem \ref{thm_main}, such as the
 non-existence of left-invariant Einstein metrics on $\Sl_*(2,\RR)^k$ for all $k\geq 2$: see Corollary \ref{cor_nonexist}.

The main difficulty in proving Theorem \ref{thm_main} is that
 Euclidean spaces admit
 extremely different presentations as a homogeneous space $\G/\Hh$. Firstly,
$\RR^n$ is a solvmanifold, which  in dimension $n=3$ already provides uncountably many algebraically distinct choices \cite{Bian1898}.
Diametrically opposed, it is also a homogeneous space for $\G$ semisimple, 
such as $\RR^{3k}=\Sl_*(2,\RR)^k$, $k\geq 1$,
or $\RR^{2m+1}$ being the universal cover of
$\SO(m,2)/\SO(m)$, $m\geq 3$: see Remark \ref{rem_euclhom}
for further examples. Finally and more generally,  semidirect products  of the above cases occur.

  
Next, we restate the following important classification result of Lauret:

\begin{teointro}[\cite{standard}]\label{thm_Lauret}
Einstein solvmanifolds are standard.
\end{teointro}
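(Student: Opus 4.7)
The plan is to exploit the moment-map / GIT viewpoint on left-invariant metrics. Fix a background inner product on $\mathfrak{s}$ and identify the Einstein solvmanifold $(S,g)$ with its Lie bracket $\mu\in V:=\Lambda^2\mathfrak{s}^*\otimes\mathfrak{s}$; the action of $\Gl(\mathfrak{s})$ on $V$ by change of basis realises the left-invariant metrics on $S$, modulo isometry, as the $\Gl(\mathfrak{s})$-orbit of $\mu$. In these coordinates the Ricci operator decomposes as
\[
\Ricci_\mu \;=\; M_\mu \;-\; \tfrac{1}{2}\bigl(\ad_\mu H + (\ad_\mu H)^t\bigr),
\]
where $M_\mu$ is the moment map of the $\Gl(\mathfrak{s})$-action on $V$ and $H\in\mathfrak{a}:=\mathfrak{n}^\perp$ is the mean-curvature vector defined by $\langle H,\cdot\rangle=\tr\,\ad_\mu(\cdot)$, with $\mathfrak{n}$ the nilradical. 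The conclusion $[\mathfrak{a},\mathfrak{a}]=0$ thus becomes a statement about the interplay of these two terms.

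First I would project $\Ricci_\mu=c\,\Id$ onto $\End(\mathfrak{n})$ and compare it with the analogous formula for the restricted nilpotent bracket $\mu_\mathfrak{n}$. This forces $\mu_\mathfrak{n}$ to be a \emph{nilsoliton}: a minimum of $g\mapsto\|g\cdot\mu_\mathfrak{n}\|^2$ along its $\Sl(\mathfrak{n})$-orbit. Equivalently, $M_{\mu_\mathfrak{n}}=c\,\Id_\mathfrak{n}+D$ for a symmetric positive-definite derivation $D$ of $(\mathfrak{n},\mu_\mathfrak{n})$, which one then identifies with $\ad_\mu H|_\mathfrak{n}$. Real GIT (Kempf--Ness, Richardson--Slodowy) pins down $\mu_\mathfrak{n}$ and $D$ up to conjugation and identifies the stabiliser of $\mu_\mathfrak{n}$ in $\Gl(\mathfrak{n})$ as a reductive subgroup whose self-adjoint part consists precisely of the symmetric derivations of $\mu_\mathfrak{n}$ commuting with $D$.

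The core of the argument is then to show that each $\ad_\mu A|_\mathfrak{n}$, $A\in\mathfrak{a}$, is normal and commutes with $D$. I would approach this by deforming $\mu$ along the one-parameter subgroup generated by the skew part of $\ad_\mu A|_\mathfrak{n}$: convexity of the Kempf--Ness functional $\|g\cdot\mu_\mathfrak{n}\|^2$ along such subgroups, combined with the mixed $\mathfrak{a}$--$\mathfrak{n}$ block of the Einstein equation, should force the deformation to be trivial. Once normality is in hand, computing $\tr(\ad_\mu A)^2$ two ways---once via the $\mathfrak{a}$-block of the Einstein equation and once via the Jacobi identity---yields $[\mathfrak{a},\mathfrak{a}]=0$, which is Heber's characterisation of standard.

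The hardest step, and where I expect the real work to lie, is precisely this normality. The Einstein equation directly controls only the symmetric parts of endomorphisms, while the skew parts of $\ad_\mu A|_\mathfrak{n}$ are \emph{a priori} unconstrained, and it would be circular to invoke $[\mathfrak{a},\mathfrak{a}]=0$ inside the argument. Normality must therefore be extracted variationally, via Kempf--Ness convexity along $\Gl(\mathfrak{n})$-orbits combined with careful bookkeeping of how $\ad_\mu\mathfrak{a}$ interacts with the real GIT stratification of $V$.
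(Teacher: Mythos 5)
Your proposal does not follow the paper's route, and as written it has a genuine gap at exactly the point you yourself flag as hardest. Worse, the gap starts one step earlier than you think: the assertion that restricting $\Ricci_\mu=c\,\Id$ to $\End(\mathfrak{n})$ ``forces $\mu_{\mathfrak{n}}$ to be a nilsoliton'' is a theorem about \emph{standard} Einstein solvmanifolds. For a metric solvable Lie algebra that is not yet known to be standard, the $\mathfrak{n}\times\mathfrak{n}$ block of the Ricci operator contains, besides $\Ricci_{\mu_{\mathfrak{n}}}$ and the symmetrised $\ad H$ term, additional contributions from $[\mathfrak{a},\mathfrak{a}]$ and from the $\mathfrak{a}$-components of $[\mathfrak{a},\mathfrak{n}]$, and the mixed $\mathfrak{a}$--$\mathfrak{n}$ block need not vanish; so the comparison you propose cannot be carried out before standardness is established. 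The subsequent normality step is then only gestured at (``convexity of the Kempf--Ness functional \dots should force the deformation to be trivial''): no inequality is exhibited whose equality case yields normality, and that is precisely where all the work lies. In short, the proposal is a plausible outline of the Heber--Lauret circle of ideas, but both substantive implications are asserted rather than proved.

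For comparison, the paper does not reprove Lauret's theorem by a direct algebraic argument at all: it deduces Theorem \ref{thm_Lauret} in one line from Theorem \ref{thm_main2}, applied to the Riemannian product of a circle with the Einstein solvmanifold. There the orbits of the solvable group are totally geodesic copies of the solvmanifold (hence integrally minimal), the orbit space is $S^1$, and the product is orbit-Einstein with negative constant, so Theorem \ref{thm_main2} applies and its conclusion is exactly standardness of the orbits. The GIT input you are reaching for is packaged there in a different form: the stratum label $\Beta$ of Appendix \ref{app_beta}, the curvature estimate $\tr \big(\Ricci_g\, q\Beta^+ q^{-1}\big) + g(\mcv_g,\mcv_g)\geq 0$ with equality if and only if $q\Beta^+ q^{-1}\in\Der(\ggo)$, and the monotone quantity $w_\Beta$ of Lemma \ref{lem_lmonotone}, whose periodicity forces the equality case. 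If you want a self-contained proof along your lines you would essentially have to reproduce Lauret's original argument, whose decisive step is a stratification inequality of this type rather than a Kempf--Ness convexity argument along $\Gl(\mathfrak{n})$-orbits.
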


The standard condition, described below, is an algebro-geometric condition introduced and extensively studied by Heber in \cite{Heb}. 
For standard Einstein solvmanifolds of fixed dimension, he showed
 finiteness of the eigenvalue
type of the \emph{modified Ricci curvature}, see below,
a result intimately related to
the finiteness of critical values of a (real) moment map.
Even though a complete classification  is out of reach at the moment, see \cite{cruzchica}, Theorem \ref{thm_main} together with \cite{standard} and \cite{Heb} would provide a very precise understanding of \emph{all} non-compact homogeneous Einstein manifolds, provided the Alexseevskii conjecture holds true.

 
A Ricci soliton $(M^n,g)$ is a Riemannian manifold satisfying
$\ric_g+\mathcal{L}_X g= \lambda\cdot g$, $\lambda \in \RR$, where
 $\mathcal{L}_X g$ denotes the Lie derivative of $g$ in the direction of a smooth vector field $X$ on $M^n$. 
Recall now that 
a homogeneous Ricci soliton on $\RR^n$
gives rise to a homogeneous Einstein space $(\RR^{n+1},\hat g)$:
see \cite{alek}, \cite{HePtrWyl}. Since 
by work of Jablonski \cite{Jab15} Einstein solvmanifolds
are \emph{strongly solvable}  spaces,  from
Theorem \ref{thm_main} we deduce

\begin{corintro}\label{cor_main1}
Homogeneous Ricci solitons on $\RR^n$ are isometric to solvsolitons.
\end{corintro}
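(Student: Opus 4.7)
The plan is to reduce Corollary \ref{cor_main1} to Theorem \ref{thm_main} via the standard Einstein extension of a homogeneous Ricci soliton, and then to use Jablonski's strong solvability theorem \cite{Jab15} to transport the solvable structure back down to $\RR^n$.

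First, given a homogeneous Ricci soliton $(\RR^n, g)$, I would invoke the one-dimensional extension construction of \cite{alek}, \cite{HePtrWyl} to produce a homogeneous Einstein metric $\hat g$ on $\RR^{n+1}$. In this construction, the extra direction corresponds to the one-parameter family of diffeomorphisms realising the soliton flow, and the transitive $\G$-action on $(\RR^n, g)$ extends to a transitive isometric action on $(\RR^{n+1}, \hat g)$ by a Lie group containing $\G$ together with an additional $\RR$-factor. Since the underlying manifold is $\RR^{n+1}$, Theorem \ref{thm_main} applies and yields an isometry $(\RR^{n+1}, \hat g) \cong (\Ss, g_\Ss)$, where $\Ss$ is a simply-connected solvable Lie group and $g_\Ss$ is a left-invariant Einstein metric.

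Second, I would bring in Jablonski's result \cite{Jab15} that Einstein solvmanifolds are strongly solvable. This provides a canonical normal transitive solvable subgroup inside $\Iso^0(\RR^{n+1}, \hat g)$, and in particular identifies the soliton direction, which is a one-parameter group of isometries of $\hat g$ centralising the original $\G$-action, with a distinguished one-parameter subgroup of $\Ss$. Its codimension-one complement $\Ss_0 \subset \Ss$ is then a solvable Lie subgroup acting simply transitively on the $\RR^n$-slice by isometries of $g$, and unwinding the Einstein equation on $(\RR^{n+1}, \hat g)$ along this slice specialises to the algebraic solvsoliton equation for the induced left-invariant metric on $\Ss_0$. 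The main technical point is this last descent step: one must verify that the strongly solvable decomposition of $\Ss$ is compatible with the hypersurface slice on which the soliton lives, so that a genuine simply transitive solvable isometric action on $(\RR^n, g)$ is indeed recovered.
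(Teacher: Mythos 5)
Your overall route is the same as the paper's: extend the soliton to a homogeneous Einstein metric $\hat g$ on $\RR^{n+1}$, apply Theorem \ref{thm_main} together with Jablonski's strong solvability theorem, and descend back to $\RR^n$. However, two steps are genuinely missing. First, the case $\lambda \geq 0$ is not covered: the one-dimensional Einstein extension is only available (and only needed) for expanding solitons, so your first paragraph does not get off the ground when $\lambda\geq 0$. The paper disposes of this case beforehand via the rigidity results of Naber and Petersen--Wylie, by which a homogeneous shrinking or steady soliton is a quotient of a product of a compact homogeneous Einstein manifold with a flat factor, hence flat on the contractible manifold $\RR^n$ and trivially a solvsoliton. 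Relatedly, to carry out the extension one must first know the soliton is \emph{algebraic} with a symmetric (hence normal) derivation; the paper makes this explicit by citing Jablonski's earlier theorem before invoking the extension of Heber--Petersen--Wylie.

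Second, the descent step, which you correctly flag as the main technical point, is exactly where the content lies, and your sketch of it would fail as written: a codimension-one complement $\Ss_0$ of the soliton direction inside a solvable group $\Ss$ need not be a subgroup. The paper's mechanism is different: strong solvability yields a transitive solvable subgroup $\hat\Ss$ of the \emph{given} transitive group $\hat\G$ (not merely of $\Iso(\RR^{n+1},\hat g)$, which matters for the next step), and one intersects $\hat\sg$ with the codimension-one ideal $\ggo$ of $\hat\ggo$. That intersection is automatically a subalgebra, has codimension one in $\hat\sg$ by transitivity of $\hat\Ss$, and spans the tangent space to the $\G$-orbits; the corresponding connected subgroup of $\G$ is therefore solvable and transitive on $\RR^n$. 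Finally, instead of ``unwinding the Einstein equation'' along the slice to re-derive the solvsoliton equation, the paper concludes at once: a homogeneous Ricci soliton admitting a transitive solvable isometry group is isometric to a solvsoliton by Jablonski's theorem. Your plan is salvageable, but these two points must be filled in for it to constitute a proof.
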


Solvsolitons,  introduced in \cite{solvsolitons}, are homogeneous Ricci solitons admitting a transitive solvable Lie group $\Ss$ of isometries. It is shown in \cite{Jbl2015} that  $\Ss$ may be chosen to be simply-transitive,  such that the Ricci endomorphism 
satisfies $\Ricci_g - \lambda \cdot \Id ={\rm D}$, where ${\rm D}$
is a derivation of the Lie algebra $T_e\Ss$. It follows that the 
corresponding Ricci flow solution is driven by the one-parameter group of automorphisms of $\Ss$ corresponding to ${\rm D}$.
Automorphisms  are precisely those diffeomorphisms  which preserve the space of left-invariant metrics on $\Ss$.

A simply-connected homogeneous space is diffeomorphic to $\RR^n$
if and only if \emph{all} its homogeneous metrics have
 non-positive scalar curvature  \cite{BB}. As a consequence 
homogeneous Ricci flow solutions on $\RR^n$ always exist for all positive 
times  \cite{scalar}. 
By \cite{BL17}  and Corollary \ref{cor_main1}  it then follows
that any parabolic blow-down of such a solution subconverges to a limit solvsoliton in pointed Cheeger-Gromov topology. 
Notice that for the semisimple Lie group  
$\Sl_*(2,\RR)^k$ 
this means that for generic left-invariant
initial metrics the Lie group structure 
of their full isometry group changes completely when passing to a limit.

Turning to the proof of Theorem \ref{thm_main}, we would like to mention that all the algebraic structure results for non-compact homogeneous Einstein manifolds developed in \cite{alek}, \cite{JblPet14}
and \cite{AL16}  yield for instance no information whatsoever for the presentation  $\RR^{3k}=\Sl_*(2,\RR)^k$. Therefore,
 a purely algebraic proof of Theorem \ref{thm_main} is elusive at the moment.
  To overcome this, we prove that homogeneous Euclidean Einstein spaces
  admit  cohomogeneity-one actions by 
non-unimodular subgroups $\Go$ of $\G$, with orbit space $\RR$.
We then show \emph{periodicity} of  the corresponding foliation by orbits, 
  meaning that after passing to a quotient, which is still homogeneous, the orbit space becomes $S^1$, and 
  that the integral of the
mean curvature of the orbits over the orbit space vanishes, a condition
we call  \emph{integral minimality}.
 This then reduces the proof of Theorem \ref{thm_main} essentially
  to the following second main result
 of this paper.

\begin{teointro}\label{thm_main2}
Suppose that $(M^n, g)$ admits an effective, cohomogeneity-one action of  a Lie group $\Go$ with closed, integrally minimal orbits and $M^n/\Go = S^1$. If in addition $(M^n,g)$ is \emph{orbit-Einstein} with
negative Einstein constant, then all orbits are standard homogeneous spaces.
\end{teointro}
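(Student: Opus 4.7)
Since $M^n/\Go = S^1$, pass to the universal cover and write the metric as $dt^2 + g_t$ on $\RR \times F$ with $F = \Go/\Hho$ the common orbit model, where $(g_t)_{t\in\RR}$ is a one-parameter family of $\Go$-invariant metrics on $F$. The $S^1$ quotient arises from a deck transformation $(t,x)\mapsto(t+T,\psi(x))$ where $\psi$ is a $\Go$-equivariant isometry of $F$ induced by an automorphism of $\Go$ preserving $\Hho$, so in particular $\psi^*g_{t+T}=g_t$. Following Lauret's varying-bracket correspondence, fix a background inner product $\ip$ on $\mgo=\ggoo/\hgo$ and represent the curve $(g_t)$ by a curve $\mu_t = h_t\cdot\mu_0$ in $V := \Lambda^2\mgo^*\otimes(\mgo\oplus\hgo)$, where $h_t\in\Gl^{\Hh}(\mgo)$ and $\mu_0$ is the Lie bracket of $\ggoo$. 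The shape operator $\sff_t$ of the orbit at parameter $t$ becomes the symmetric part of $h_t^{-1}\dot h_t$, and the integral minimality hypothesis reads $\int_0^T\mcv(t)\,dt = 0$ with $\mcv(t)=\tr\sff_t$.

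\textbf{The orbit-Einstein ODE and the Lyapunov function.}
The tangential Einstein condition, together with the Gauss--Codazzi equations, produces a Riccati-type ODE along the transverse geodesic of the schematic form
\[
\dot\sff_t \;=\; 2\sff_t^2 \;-\; \mcv(t)\,\sff_t \;+\; \ricci^{g_t} \;-\; \lambda\,\Id,
\]
which in the varying-bracket picture translates into an evolution of $\mu_t$ driven by the moment map $\mm(\mu_t)$ of the $\Gl^{\Hh}(\mgo)$-action on $V$. The plan is to construct a function $F:V\setminus\{0\}\to\RR$ from the real GIT framework (a suitably normalized Kempf--Ness length, or a function involving $\|\mm(\mu)\|^2$) whose Hessian along $\Gl^{\Hh}(\mgo)$-orbits is controlled by the convexity of the Kempf--Ness integral along geodesics in the symmetric space $\Gl^{\Hh}(\mgo)/\Or^{\Hh}(\mgo)$. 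Combined with the above ODE and $\lambda<0$, this should yield a differential identity of the form
\[
\ddt F(\mu_t) \;=\; c\cdot\mcv(t) \;+\; Q(t),
\]
where $c$ is an explicit constant depending on $\lambda$ and $Q(t)\geq 0$ is a non-negative defect whose pointwise vanishing characterises standardness of $\mu_t$.

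\textbf{Closing the argument and main obstacle.}
Integrating this identity over $[0,T]$, the left-hand side vanishes by periodicity $F(\mu_T)=F(\mu_0)$ (using that $\psi$ acts by an automorphism, which is an isometry of the $\Gl^{\Hh}(\mgo)/\Or^{\Hh}(\mgo)$-picture), the term $c\,\mcv(t)$ integrates to zero by the integral-minimality hypothesis, and therefore $\int_0^T Q(t)\,dt = 0$. Non-negativity then forces $Q(t)\equiv 0$, so each orbit $(F, g_t)$ is standard. The main obstacle is the construction of $F$ with simultaneously the correct linear $\mcv$-coefficient (so that integral minimality exactly kills it) and a semidefinite defect $Q$ whose zero locus is precisely the standardness stratum. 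This is where $\lambda<0$ is essential: the convexity estimate along $\Gl^{\Hh}(\mgo)$-geodesics derived from real GIT has the correct sign only in the negative Einstein regime, and matching its second-order information with the transverse Riccati-type ODE for $\sff_t$ — on a space of invariant metrics that need not be linear and whose Kempf--Ness stratification is genuinely $\Hho$-equivariant — is the central technical point.
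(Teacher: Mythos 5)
Your high-level architecture does agree with the paper's: write $g=dt^2+g_t$ for a curve of invariant metrics, derive the Riccati equation $L_t'=-(\tr L_t)L_t+\Ricci_t+\Id_\mgo$, produce a quantity monotone along the flow, and use periodicity together with integral minimality to force the monotonicity defect to vanish, with the rigidity case characterising standardness. But your proposal defers exactly the step that constitutes the proof: the construction of the Lyapunov function. The candidates you name (a normalized Kempf--Ness length, or a function involving $\Vert\mm(\mu)\Vert^2$) are not the objects that work. What the paper uses is the \emph{stratum label} $\Beta$ of $\Go/\Hho$ from the Ness--Kirwan stratification of the variety of Lie brackets, and the function
\[
w_\Beta(t)\;=\;\tr\big(L_t\,q_t\,\Beta^+q_t^{-1}\big)\cdot e^{\int_0^t\tr L_s\,ds},\qquad \Beta^+=\tfrac{1}{\Vert\Beta\Vert^2}\Beta+\Id,
\]
where $g_t=q_t\cdot\bar g$ with $q_t$ chosen in the parabolic $\Qb$. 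Its monotonicity rests on two inputs absent from your sketch: (i) the GIT estimate $\tr(\Ricci_t\,q_t\Beta^+q_t^{-1})\geq-2h(t)$, with equality iff $q_t\Beta^+q_t^{-1}\in\Der(\ggoo)$; and (ii) a genuine \emph{maximum principle} for $h(t)=\tfrac12\Vert\mcvot\Vert_t^2$, namely $\tr\Beta^+\geq 2h(t)$, which is what makes $\tr((\Ricci_t+\Id)\,q_t\Beta^+q_t^{-1})\geq -2h(t)+\tr\Beta^+\geq 0$. Item (ii) is itself nontrivial: it follows from the a priori Cauchy--Schwarz estimate $-\ricci_t(\mcvot,\mcvot)\cdot\tr\Beta^+\geq 4h^2(t)$ combined with the evolution equation for $h$ and periodicity of $h$. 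Without it the sign of your would-be defect $Q(t)$ is simply not controlled, and this is where the "correct sign in the negative Einstein regime" actually enters --- not through a convexity statement for a Kempf--Ness functional.

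Two further points. The periodicity $F(\mu_T)=F(\mu_0)$ is not automatic from "$\psi$ is induced by an automorphism": for the function above one must check that the deck transformation, written as $q_T^{-1}(\Ad f)$, lies in $\Or(\ggoo)\cap\Qb=\Kb$ and hence commutes with $\Beta$; this uses $\Aut(\ggoo)\leq\Slb\subset\Qb$, again a property of the stratum label. The rigidity at the end is also not that "the zero locus of $Q$ is the standardness stratum" in a tautological sense: equality forces $\Beta^+\in\Der(\ggoo)$ and $[q_t,\Beta]=0$, and standardness follows because the $g_t$-orthogonal complement of the nilradical $\ngoo=\operatorname{Im}\Beta^+$ is then $\ker\Beta^+$, the kernel of a derivation, hence a subalgebra. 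Finally, your schematic Riccati equation carries a spurious quadratic term $2\sff_t^2$; combining the Gauss and Riccati equations cancels the quadratic term and yields the equation displayed above. In short: the plan is the right plan, but the theorem \emph{is} the Lyapunov function, and it is missing.
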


A cohomogeneity-one manifold $(M^n, g)$ is called
\emph{orbit-Einstein} with negative Einstein constant, 
if for $\lambda <0$ we have 
$\ricci_{g}(X,X) = \lambda\cdot g(X,X)$
 for all vectors $X$ tangent to orbits.
Generalizing \cite{Heb}, we say that a homogeneous space $(\Go/\Hho,\bar g)$  is \emph{standard},
if the Riemannian submersion induced by the free isometric action of the maximal connected normal nilpotent subgroup $\No \leq \Go$  on $\Go/\Hho$ has \emph{integrable} horizontal distribution: 
see Definition \ref{def_stand}. 

Finally, let us mention that Theorem \ref{thm_main2}
  immediately implies \mbox{Theorem \ref{thm_Lauret}} when
applying it  to the Riemannian product of a circle and an Einstein solvmani\-fold. 
Other non-existence results on homogeneous and cohomogeneity-one Einstein metrics include
\cite{WZ86},  \cite{Bhm99}, \cite{Nkn2},  \cite{Bhm05}, \cite{JblPet14}, \cite{AL16}.

We turn now to the proof of Theorem \ref{thm_main2}.
Since the $\Go$-orbits form a family of
 equidistant hypersurfaces in $M^n$, we write
$g =dt^2 +g_t$, $t \in \RR$,
for a smooth curve of homogeneous metrics $g_t$
on $\bar M=\Go/\Hho$.  By the
 Gau\ss{} equation, the Riccati equation  and
 \cite{Bhm1999},  the orbit-Einstein equation \eqref{eqn_Riccati} on $(M^n,g)$ with Einstein constant $-1$
can be considered as the following `second order Ricci flow' on the space of 
homogeneous metrics on $\bar M$:
\begin{eqnarray}\label{eqn_2RF}
 \tfrac{D^2 }{dt^2} g_t =-(\tr L_t) \cdot g_t' +2  \ric_{g_t} +2 g_t\,.
\end{eqnarray}
Here $\tr L_t$ denotes the mean curvature of an orbit
and $\tfrac{D}{dt}$ the covariant derivative of the symmetric
 metric on the symmetric space of $\Go$-homo\-ge\-neous metrics on $\Go/\Hho$.
 
We decompose
the Ricci tensor $\ricci_{\bar g}$ of a homogeneous space
$(\bar M=\Go/\Hho,\bar g)$  as a sum of two tensors, one tangent to the ${\rm Diff}_{\Go}(\bar M)$-orbit 
through $\bar g$, and  another one, the \emph{modified Ricci curvature}
 $\riccim_{\bar g}$, orthogonal to it. 
  Here, 
  ${\rm Diff}_{\Go}(\bar M)$ denotes the set of 
  those diffeomorphisms 
  of $\bar M$ which preserve the space of  $\Go$-invariant metrics when acting by pull-back;
algebraically these are just the automorphisms of $\Go$
preserving $\Hho$.
Denoting $\scalm_{\bar g} = \tr_{\bar g}\riccim_{\bar g}$,
we set 
\[
  h(\bar g):=\tfrac{1}{2}\cdot (\scalm_{\bar g}-\scal_{\bar g})\geq  0\,,
\] 
see Lemma \ref{lem_h} and Remark \ref{rem_defh}. 
 Algebraically,
$h=0$ if and only if the group $\Go$ is unimodular.

Using the orbit-Einstein equation for $(M^n,g)$ and the compactness
of the orbit space
we establish a maximum principle for the real-valued function 
$h(t):=h(g_t)$:
see Lemma \ref{lem_h} and Lemma \ref{lem_betapgeqh}.
This yields an upper bound for  $2h$ given by 
$\tr \Beta^+=n-1/\Vert \Beta\Vert^2\geq 0$. Here
 $\Beta$ is the stratum label of the homogeneous space
$\Go/\Hho$, a self-adjoint endomorphism, coming from the
Morse-type,  Ness-Kirvan stratification of the space of Lie 
brackets on $T_e \Go$ introduced by Lauret \cite{standard}: 
see  Appendix \ref{app_beta}  and \cite{BL17}.
From this  a priori estimate for $h$ 
we establish in \mbox{Lemma \ref{lem_lmonotone}}
the existence of a Lyapunov function
for the orbit-Einstein equation. Using that the orbits are integrally minimal
it follows that this Lyapunov function is periodic,  hence constant. As a consequence, several inequalities
become equalities and Theorem \ref{thm_main2} follows.

Finally, we indicate how Theorem \ref{thm_main2} implies
Theorem \ref{thm_main}. Using the standard condition for the given homogeneous Einstein space $(\RR^n=\G/\Hh,g)$, see \cite{alek},
and all the codimension-one orbits, one shows that the simple factors of a Levi factor $\Ll \leq \G$
are pairwise orthogonal: see Proposition \ref{prop_simpleperp}.
Together with the condition $\G/\Hh \simeq \RR^n$, this implies that the induced metric on $\Ll/\Hh$ is \emph{awesome}, that is, it admits an orthogonal Cartan decomposition.
This leads to a contradiction by \cite{Nkn2}, unless
the Levi factor is trivial, in which case $\G$ is solvable.

The article is organized as follows. In Section 1 we discuss the cohomogeneity one Einstein equation and derive the evolution equation
for $h(t)$  induced by \eqref{eqn_2RF}. In Section  \ref{sec_maxp} we prove Theorem \ref{thm_main2} by
establishing a maximum principle for $h$ and by defining 
a new Lyapunov function for \eqref{eqn_2RF}.
In Section  \ref{sec_homspaces} we introduce the assumption $M\simeq \RR^n$ which will be used from here on, and characterise  certain minimal presentations of homogeneous Euclidean spaces
and prove their periodicity in Section \ref{sec_period}.
In Section \ref{sec_integralm} integral minimality of the so called 
Levi presentations is shown, and finally,
in Section \ref{sec_proofmain} we prove Theorem \ref{thm_main}  and Corollary \ref{cor_main1}.
In Appendix \ref{app_homogeneous}
we address elementary algebraic properties of homogeneous spaces, and Appendix \ref{app_beta} contains the necessary preliminaries on GIT and the $\beta$-endomorphism associated to a homogeneous space.

\vs \noindent {\it Acknowledgements.} The authors would like to thank Michael Jablonski for his useful comments on a first version of this article.

\section{The cohomogeneity-one Einstein equation}\label{sec_cohom1}

Let $\Go$ be a connected Lie group acting 
on a complete, connected Riemannian manifold $(M^n,g)$ 
by $(g,p)\mapsto g \cdot p$ for $g\in \Go$ and  $p\in M^n$.
We assume that the action is
proper, isometric, effective and with cohomogeneity one.
By \cite{Kos65},
properness ensures that the orbits are embedded submanifolds, and that the isotropy subgroups are compact. 

Let us in addition assume  that all orbits are principal. 
Then by \cite{Pal61},
the orbit space ${M^n}/\Go$ is diffeomorphic to  $\RR$ or $S^1$. 
Choose $\gamma:\RR \to M^n$ a unit speed geodesic intersecting all orbits orthogonally, and set $\Sigma_t := \Go \cdot \gamma(t)$.
Let $N$ denote the unit normal vector field of the foliation
$\Sigma_t$ of $M^n$ with  
$\gamma'(t)=N_{\gamma(t)}$  for all $t \in \RR$. Then, all integral 
curves of $N$ are unit speed geodesics intersecting all $\Go$-orbits
$\Sigma_t$ orthogonally.

Infinitesimally, the $\Go$-action induces a Lie algebra homomorphism 
$\ggoo := \Lie(\Go) \to \Xg(M^n)$  assigning to each $X\in \ggoo$ a Killing field on $(M^n, g)$, also denoted by $X$, and given by
\begin{equation}\label{eqn_actionfield}
		X_p := \ddt\big|_{t=0} \exp(tX) \cdot  p, \qquad p\in M^n\,. 
\end{equation}
 Since the flow of Killing fields consists of isometries, it maps geodesics  onto geodesics, and consequently
$[X,N]=0$ where $[ \, \cdot \, , \cdot  \,]$ denotes the Lie bracket of smooth vector fields on $M^n$.

Since  $\Sigma_0$ is a principal orbit, the
isotropy subgroup $\Hho:=\Go_{\gamma(0)}$ at $\gamma(0)$ fixes all points $\gamma(t)$, hence $\Go_{\gamma(t)}=\Hho$  for all $t$. Set $\hgo :=\Lie(\Hho)$ and let $\ggoo = \hgo \oplus \mgo$ be the `canonical' reductive decomposition: $\mgo$ is the orthogonal complement of  $\hgo$ in $\ggoo$ with respect to the Killing form 
$\kf_\ggoo$ of $\ggoo$.
By \eqref{eqn_actionfield} we obtain for all $t$ an identification 
\begin{equation}\label{eqn_identifggo}
	T_{\gamma(t)} \Sigma_t \simeq \mgo\,,
\end{equation}
which will be used constantly in what follows. In particular, the metric $ g$
on $M^n$ induces a family of $\Ad(\Hho)$-invariant scalar products $( g_t)_{t\in \RR}$ on $\mgo$. 

Let $L_t \in \End(\mgo)$ denote the shape operator
of $\Sigma_t$ at $\gamma(t)$ evaluated on Killing fields:
\begin{eqnarray*}
 g_t(L_t X, Y) =  g(\nabla_X N, Y)_{\gamma(t)} = - g({\gamma'(t)}, \nabla_X Y ),  \qquad X,Y\in  \mgo,
\end{eqnarray*}
since $ g(N,Y)\equiv 0$. Here
 $\nabla$ denotes the Levi-Civita connection of $(M^n, g)$.

\begin{lemma}\label{lem_ddtgt}
The scalar products $(g_t)_{t\in \RR}$ on $\mgo$ evolve by 
$g_t' ( \,\cdot \, ,\, \cdot )
	=2\, g_t(L_t \,\cdot \, ,\, \cdot )$.
\end{lemma}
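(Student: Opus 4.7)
The plan is to differentiate $g_t(X,Y)$ along the geodesic $\gamma$ using the ambient Levi-Civita connection, then exploit the Killing-field compatibility $[N,X]=0$ to convert normal derivatives into shape-operator expressions.

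First, I would fix $X, Y \in \mgo$ and consider them as Killing fields on $M^n$ via \eqref{eqn_actionfield}. Under the identification \eqref{eqn_identifggo}, the scalar product is simply $g_t(X,Y) = g(X,Y)_{\gamma(t)}$, so
\[
 g_t'(X,Y) \;=\; \ddt g(X,Y)_{\gamma(t)} \;=\; \big(N\,g(X,Y)\big)_{\gamma(t)} \,,
\]
since $\gamma'(t) = N_{\gamma(t)}$.

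Next, I would apply metric compatibility of the Levi-Civita connection of $(M^n,g)$ to get
\[
 N\,g(X,Y) \;=\; g(\nabla_N X, Y) + g(X, \nabla_N Y)\,.
\]
Here the key identity is $[N,X]=0$ (noted in the paragraph after \eqref{eqn_actionfield}, since $X$ is Killing and $N$-integral curves are geodesics). Combined with the torsion-freeness of $\nabla$, this yields $\nabla_N X = \nabla_X N$ and likewise for $Y$. Substituting into the definition of the shape operator $L_t$ gives $g(\nabla_X N, Y)_{\gamma(t)} = g_t(L_t X, Y)$ and analogously with the roles of $X,Y$ swapped.

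Finally, I would invoke the $g_t$-symmetry of the second fundamental form (equivalently, self-adjointness of $L_t$ with respect to $g_t$) to identify
\[
 g_t(L_t X, Y) + g_t(X, L_t Y) \;=\; 2\, g_t(L_t X, Y)\,,
\]
which is the desired formula. I don't expect any real obstacle here: the statement is essentially a rephrasing of the standard fact that the first variation of the induced metric on equidistant hypersurfaces equals twice the second fundamental form, and the only input beyond this is the identification \eqref{eqn_identifggo} of tangent spaces via Killing fields, which is already in place.
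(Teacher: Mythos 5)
Your proof is correct and follows essentially the same route as the paper's: a one-line metric-compatibility computation along $\gamma$ identifying $g(\nabla_N X,Y)$ with $g_t(L_t X,Y)$, followed by the $g_t$-symmetry of $L_t$. The only cosmetic difference is that you pass through $[N,X]=0$ and torsion-freeness to get $\nabla_N X=\nabla_X N$, whereas the paper instead uses the skew-symmetry of $\nabla X$ for the Killing field $X$ (together with $g([X,Y],\gamma'(t))=0$); both are valid, and your version lands cleanly on the stated sign $+2\,g_t(L_t\,\cdot\,,\,\cdot\,)$.
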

\begin{proof}
Since $g([X,Y],\gamma'(t))_{\gamma(t)}=0$
for $X, Y \in \mgo$,
$L_t$ is $g_t$-symmetric. Thus,
\begin{align*}
	\ddt \,  g(X, Y)_{\gamma(t)} = 
	  g(\nabla_{\gamma'(t)} X , Y) +  g(X, \nabla_{\gamma'(t)} Y)  
	 = -2g_t (L_t X, Y)\,,
\end{align*}
where we have used that $\nabla_{\cdot} X, \nabla_{\cdot} Y$ are 
skew-symmetric.
\end{proof}

Let $\ric_t \in \Sym^2(\mgo)$ denote the Ricci curvature of $(\Sigma_t,g_t)$ at $\gamma(t)$, and $\Ricci_t \in \End(\mgo)$ the $g_t$-symmetric Ricci operator defined by
\[
	\ric_t(X,Y) = g_t(\Ricci_t X, Y), \qquad X, Y\in \mgo\,.
\]

\begin{proposition}\label{prop_ddtLt}
If $\ric_{g}(X,X) = -{g}(X,X)$ for all $X \in \mgo$, then
\begin{align}
	L_t' &=-(\tr L_t) \cdot L_t + \Ricci_t +  \Id_\mgo\,, \label{eqn_Riccati} 
\end{align}
\end{proposition}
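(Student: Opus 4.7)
The plan is the standard derivation of a Riccati-type equation for the shape operator of an equidistant family of hypersurfaces, coupled with the traced Gau\ss{} equation and the orbit-Einstein hypothesis. The fact that every $X \in \mgo$ generates a Killing field with $[N, X] = 0$ (as noted before Lemma \ref{lem_ddtgt}) makes all derivatives along $\gamma$ transparent.

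First I would derive the pure Riccati identity $L_t' = -L_t^2 - R_N$ along $\gamma$, where $R_N(X) := R(X, N) N$. For $X, Y \in \mgo$, differentiating $g_t(L_t X, Y) = g(\nabla_X N, Y)_{\gamma(t)}$ along $N$, and using $\nabla_N N = 0$, $[N, X] = [N, Y] = 0$, $\nabla_N Y = \nabla_Y N = L_t Y$, together with the curvature identity $\nabla_N \nabla_X N = R(N, X) N = -R_N X$, one computes
\[
\ddt g(\nabla_X N, Y) = g(\nabla_N \nabla_X N, Y) + g(\nabla_X N, \nabla_N Y) = -g_t(R_N X, Y) + g_t(L_t^2 X, Y).
\]
Subtracting from this the contribution $g_t'(L_t X, Y) = 2\, g_t(L_t^2 X, Y)$ supplied by Lemma \ref{lem_ddtgt} gives $g_t(L_t' X, Y) = -g_t(R_N X, Y) - g_t(L_t^2 X, Y)$, i.e.\ the Riccati identity.

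Second I would invoke the Gau\ss{} equation for the hypersurface $\Sigma_t$ with shape operator $L_t$. A routine trace over a $g_t$-orthonormal basis of $\mgo$, combined with $\ric_g(X, X) = \sum_i g(R(X, e_i) e_i, X) + g(R_N X, X)$, yields
\[
\ric_g(X, X) = \ric_{g_t}(X, X) + g_t(L_t^2 X, X) - (\tr L_t)\, g_t(L_t X, X) + g_t(R_N X, X)
\]
for every $X \in \mgo$.

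Finally, I would substitute the orbit-Einstein hypothesis $\ric_g(X, X) = -g(X, X)$ and eliminate $R_N$ using the Riccati identity. The two $L_t^2$-terms cancel, leaving
\[
g_t(L_t' X, X) = \ric_{g_t}(X, X) - (\tr L_t)\, g_t(L_t X, X) + g_t(X, X),
\]
and polarization (using $g_t$-symmetry of $L_t'$, $\Ricci_t$ and $L_t$) converts this into the operator identity \eqref{eqn_Riccati}. The only point requiring care is sign bookkeeping in the Riccati and Gau\ss{} equations, which must be consistent with the convention $g_t(L_t X, Y) = g(\nabla_X N, Y)$ used to define $L_t$; once that convention is fixed, no further obstacle arises.
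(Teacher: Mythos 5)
Your proposal is correct and follows essentially the same route as the paper: the traced Gau\ss{} equation for $\Sigma_t$ combined with the Riccati equation $L_t' = -L_t^2 - R_N$ and the orbit-Einstein hypothesis, with all signs consistent with the convention $g_t(L_tX,Y)=g(\nabla_X N,Y)$. The only difference is that you derive the Riccati identity from scratch using $[N,X]=0$ and $\nabla_N N=0$, whereas the paper simply cites it from the literature.
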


\begin{proof}
The Gauss equation for the hypersurface $\Sigma_t \subset M^n$ gives
\[
	\ric_{ g} (X,Y) = \ric_t(X,Y) +  g( R_N X, Y)   + g(L_t X, L_t Y) 
	- (\tr L_t) \cdot   g(L_t X, Y),
\]
for Killing fields $X,Y\in \mgo$, where $ R_N X = \Riem_{ g}(X, N) N$. The claim now follows from the Riccati equation 
$\nabla_N L + L +  R_N = 0$ using that $L'=\nabla_N L$; see 
 \cite{EscWng00}.
\end{proof}

A Lie group $\Go$ is unimodular if
$\tr \ad(X) = 0$ for all $X\in \ggoo$,
 $\ad (X):\ggoo \to \ggoo,\,\,Y \mapsto [X,Y]$. 
If $\Go$ is not
unimodular, then it contains a  normal, unimodular subgroup
$\Go_u$  with Lie algebra
$\ggoo_u := \{X\in \ggoo : \tr \ad (X) = 0 \}$. It follows that
 each $\Go$-orbit
 $(\Sigma_t,g_t)$ is foliated by pairwise isometric $\Go_u$-orbits
 of codimension one with a $\Go$-invariant  mean curvature vector field.
At the point $\gamma(t)\in \Go_u\cdot \gamma(t) \subset\Sigma_t$ 
we can consider 
this mean curvature vector as 
 $\mcvot \in \mgo$, using \eqref{eqn_identifggo}. An algebraic description of
$\mcvot$ is given in \cite[Lemma 7.32]{Bss}:
\begin{equation}\label{eqn_mcv}
	g_t(\mcvot, X) = \tr \ad(X), \qquad \forall X\in \mgo.
\end{equation}

\begin{lemma}\label{lem_h}
The function 
 \[
 	h: \RR \to \RR\,\,;\,\,\,t \mapsto  \unm \,  g_t(\mcvot, \mcvot)\,
 \]
satisfies the following evolution equations:
\begin{align*}
	 \mcvo_t' = -2 \, L_t  \mcvot \, ,\quad
	 h' =  - g_t(L_t \mcvot, \mcvot)\,,\quad  
	 h'' = 2\, \Vert L_t \mcvot\Vert_t^2 -  \,g_t (L_t' \mcvot, \mcvot).
\end{align*}
Moreover, if $\ricci_g (X,X) = -g(X,X)$ for all $X\in \mgo$  then 
\[
	h'' = 2 \, \Vert L_t \mcvot\Vert_t^2 - (\tr L_t) \, h' -  2 \, h - \ricci_t (\mcvot, \mcvot).
\] 
\end{lemma}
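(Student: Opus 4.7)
The key observation is that the algebraic characterisation \eqref{eqn_mcv} of $\mcvot$, namely $g_t(\mcvot, X) = \tr \ad(X)$ for all $X \in \mgo$, has a right-hand side that is independent of $t$. Differentiating this identity in $t$ and applying Lemma \ref{lem_ddtgt} yields
\[
0 = g_t'(\mcvot, X) + g_t(\mcvot', X) = 2 g_t(L_t \mcvot, X) + g_t(\mcvot', X)
\]
for all $X \in \mgo$, which gives the first identity $\mcvot' = -2 L_t \mcvot$.

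For the second identity I would simply apply the product rule to $2h = g_t(\mcvot, \mcvot)$, combining Lemma \ref{lem_ddtgt} with the formula just derived:
\[
2h' = g_t'(\mcvot, \mcvot) + 2 g_t(\mcvot', \mcvot) = 2 g_t(L_t \mcvot, \mcvot) - 4 g_t(L_t \mcvot, \mcvot) = -2 g_t(L_t \mcvot, \mcvot).
\]
Differentiating once more, using $g_t$-symmetry of $L_t$, Lemma \ref{lem_ddtgt}, and $\mcvot' = -2 L_t \mcvot$, the terms of the form $g_t(L_t \mcvot', \mcvot)$ and $g_t(L_t \mcvot, \mcvot')$ contribute $-2 \Vert L_t \mcvot \Vert_t^2$ each, while $g_t'(L_t \mcvot, \mcvot)$ contributes $2 \Vert L_t \mcvot \Vert_t^2$. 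A short bookkeeping yields
\[
h'' = 2 \Vert L_t \mcvot \Vert_t^2 - g_t(L_t' \mcvot, \mcvot),
\]
as claimed.

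Finally, under the orbit-Einstein assumption $\ric_g(X,X) = -g(X,X)$ on $\mgo$, Proposition \ref{prop_ddtLt} gives $L_t' = -(\tr L_t) L_t + \Ricci_t + \Id_\mgo$. Plugging this into the expression for $h''$ and recognising $g_t(L_t \mcvot, \mcvot) = -h'$, $g_t(\Ricci_t \mcvot, \mcvot) = \ricci_t(\mcvot, \mcvot)$, and $g_t(\mcvot, \mcvot) = 2h$, one obtains the stated formula for $h''$.

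No serious obstacle is expected: all four identities follow from the product rule applied to $2h = g_t(\mcvot, \mcvot)$ and from Lemma \ref{lem_ddtgt} / Proposition \ref{prop_ddtLt} already in hand. The only conceptual point worth highlighting is the $t$-independence of $\tr \ad(X)$, which is what makes $\mcvot' = -2 L_t \mcvot$ work and drives the whole computation.
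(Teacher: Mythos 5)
Your proposal is correct and follows essentially the same route as the paper: differentiate the defining identity \eqref{eqn_mcv} (whose right-hand side is $t$-independent) together with Lemma \ref{lem_ddtgt} to get $\mcvo_t' = -2L_t\mcvot$, then apply the product rule twice and substitute the Riccati equation from Proposition \ref{prop_ddtLt}. The sign bookkeeping you sketch checks out, so there is nothing to add.
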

\begin{proof}
Differentiating \eqref{eqn_mcv} and using Lemma \ref{lem_ddtgt} we get
$0 = g_t(\mcvo_t', X) + 2 \, g_t(L_t \mcvot, X)$
for all $X \in \mgo$, which gives the first formula. Using that, we may compute
\[
	h' = \unm \, g_t'(\mcvot, \mcvot) +  g_t (\mcvo_t', \mcvot) 
	= -  g_t( L_t \mcvot, \mcvot).
\]
Differentiating once more and using Lemma \ref{lem_ddtgt} again we obtain
\begin{align*}
	h'' = - g_t'(L_t \mcvot, \mcvot) - g_t(L_t' \mcvot, \mcvot) - 2 \, g_t (L_t \mcvo_t', \mcvot) 
	=  2 \, \Vert L_t \mcvot\Vert_t^2 - g_t (L_t' \mcvot, \mcvot).
\end{align*}
Finally, the last claim follows from Proposition \ref{prop_ddtLt}.
\end{proof}

\begin{remark}\label{rem_defh}
For any homogeneous space $(\Go/\Hho,\bar g)$,
the mean curvature vector  $\mcvo_{\bar g}$ and its norm $h(\bar g)$ can be defined as in \eqref{eqn_mcv} and Lemma \ref{lem_h}. 
Recall now that
$\Ricci_{\bar g}=\Riccim_{\bar g}-S^g(\ad(\mcvo_g))$
and that 
$\tr S^g(\ad(\mcvo_g)) =\Vert \mcvo_{\bar g} \Vert_{\bar g}^2=2h(\bar g)$: see the proof of Lemma 3.5 in \cite{BL17}.
Notice moreover, that $\Go$ is unimodular if and only if $h(\bar g)=0$.
\end{remark}

\section{Proof of Theorem \ref{thm_main2}}\label{sec_maxp}

In \cite{Heb} Heber obtained deep structure results for 
 \emph{standard} Einstein solvmanifolds.
  Building on that,  Lauret extended these results to arbitrary Einstein solvmanifolds by showing that they are all standard  \cite{standard}. 
The main result of this section is Theorem \ref{thm_main2}, which generalizes Lauret's result to 
\emph{orbit-Einstein} cohomogeneity-one manifolds: those satisfying the Einstein condition $\Ricci_g(v,v) = c\, g(v,v)$ for all directions 
$v$ tangent to the group orbits.

Let  $(\Go/\Hho,  \bar g)$  be an effective homogeneous space
with compact isotropy $\Hho$ and 
 canonical reductive decomposition $\ggoo =\hgo \oplus \mgo$.  
Extend the $\Ad(\Hho)$-invariant scalar product $\bar g$ on $T_{e\Hho} \Go/\Hho \simeq \mgo$ to an
$\Ad(\Hho)$-invariant scalar product $\bar g$ on $\ggoo$ with $\bar g(\mgo,\hgo)=0$.

\begin{definition}\label{def_stand}
 We call the homogeneous space $ (\Go/\Hho, \bar  g)$ \emph{standard}, if the 
$\bar g$-orthogonal complement of the nilradical $\ngoo$ ---the maximal nilpotent ideal in $\ggoo$--- is a Lie subalgebra of $\ggoo$. 
\end{definition}

Recall that $\kf_\ggoo(\ngoo,\ggoo) = 0$, thus $\ngoo \subset \mgo$ (see \cite[Lemma 5.1]{BL17}), and note
that this definition does not depend on the way the scalar product is extended. Geometrically,
standardness means that the submersion given by the
isometric action of the nilradical $\No\leq \Go$ ($\Lie( \No) =\ngoo$)
on $(\Go/\Hho,  \bar g)$ has
integrable horizontal distribution.


Turning to the proof of Theorem \ref{thm_main2}, recall that we
denoted by $\Hho=\Go_{\gamma(t)}$  the isotropy
 at the points $\gamma(t)$ of a normal, unit-speed geodesic, see Section \ref{sec_cohom1}. 
Consider the endomorphism $\Beta \in \End(\ggoo)$ associated to the homogeneous space $\Go/\Hho$ after fixing the background metric $\bar g:=g_0$, as explained in Appendix \ref{app_beta}. We may write the family $(g_t)_{t\in \RR}$ of scalar products on $\mgo$  induced by $g$ 
as $g_t = q_t \cdot \bar g$,
 for some smooth curve $ (q_t)_{t\in \RR} $ in $\Gmo$ with $q_0 = \Id_\mgo$, see \eqref{eqn_Sym2action}.  By Lemma \ref{lem_lift} and the discussion around \eqref{eqn_QbHtrans} we may even assume that
\begin{eqnarray}\label{eqn_qt}
  (q_t)_{t\in \RR} \subset \QbHmo\,,
\end{eqnarray}
 where
$\QbHmo := \Qb \cap \Gmo$,
 see \eqref{eqn_QbHm},
and $\Qb \subset \Gl(\ggoo)$ is the parabolic subgroup associated to $\Beta$, defined in Appendix \ref{app_beta}.

\begin{definition}\label{def_l}
Let 
\[
	l_\Beta: \RR \to \RR\,\,;\,\,\,t \mapsto 
	\tr \big( L_t \, q_t\,  \Beta^+ q_t^{-1} \big)\,,
\]
where $\Beta^+ := (\tfrac{1}{\Vert \Beta \Vert^2}\Beta +  \Id_\ggo)|_\mg \in \End(\mgo)$.
\end{definition}

By Theorem \ref{thm_beta}, the endomorphism
$\tfrac{1}{\Vert \Beta \Vert^2}\Beta +  \Id_\ggo$ of $\ggoo$
preserves 
$\mgo$ and its kernel contains $\hgo$.
Moreover, the image of the $\bar g$-selfadjoint,  positive-semidefinite endomorphism $\Beta^+$ is precisely $\ngoo$. Thus $\tr \Beta^+\geq 0$.
Notice also that the shape operator 
$L_t$ is symmetric with respect to $g_t$, but $q_t^{-1}L_t q_t$ is symmetric with respect to the background metric $\bar g$.

\begin{lemma}\label{lem_lperiodic}
 If $M^n/\Go = S^1$ then $l_\Beta$ is periodic.
\end{lemma}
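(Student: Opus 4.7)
The plan is to use the monodromy of the transverse geodesic around the compact orbit space. Since $M^n/\Go = S^1$, there exist $T>0$ and $\phi \in \Go$ with $\phi\cdot \gamma(t) = \gamma(t+T)$ for all $t$: given $\phi$ with $\phi\cdot\gamma(0)=\gamma(T)$, the curve $\phi\circ\gamma$ is a unit-speed geodesic through $\gamma(T)$ normal to the orbit, so uniqueness of such geodesics (together with orientation-preservation by the connected group $\Go$) forces $\phi\cdot\gamma(t)=\gamma(t+T)$. Since $\Go_{\gamma(0)}=\Go_{\gamma(T)}=\Hho$, we have $\phi \in N_\Go(\Hho)$, and $\psi := \Ad(\phi)|_\mgo \in \Gl(\mgo)$ is well-defined. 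As $\phi$ is an isometry of $(M^n, g)$, the Killing-field identifications \eqref{eqn_identifggo} produce the transport laws
\[
    g_{t+T}(\psi\,\cdot,\psi\,\cdot) = g_t(\cdot,\cdot), \qquad L_{t+T} = \psi\, L_t\, \psi^{-1}.
\]

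Translating the metric equation to the $q_t$-picture, both $\psi q_t$ and $q_{t+T}$ send $\bar g$ isometrically to $g_{t+T}$, so their quotient $k := q_{t+T}^{-1}\, \psi\, q_t$ lies in $\Or(\mgo, \bar g)$. Cyclicity of the trace then gives
\[
    l_\Beta(t+T) \;=\; \tr\bigl(L_t\, q_t\, (k^{-1}\Beta^+ k)\, q_t^{-1}\bigr),
\]
so the claim reduces to showing that $k$ centralizes $\Beta^+$.

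For this, one would invoke the fact that $\Ad(\phi)$, being a Lie algebra automorphism of $\ggoo$, fixes the bracket $\mu_\ggoo\in\lamg$, together with the general real-GIT principle (recorded in Appendix~\ref{app_beta}) that the stabilizer of any point in the stratum $S_\Beta$ is contained in the parabolic $\Qb$. This gives $\psi \in \QbHmo$, and combined with $q_t, q_{t+T} \in \QbHmo$, yields $k \in \Qb \cap \Or(\mgo,\bar g)$. Since $\Beta$ is $\bar g$-self-adjoint, $\Qb \cap \Or(\mgo,\bar g)$ coincides with the $\bar g$-orthogonal transformations preserving each eigenspace of $\Beta$, i.e., the compact part of the Levi of $\Qb$, which centralizes $\Beta$ and hence also $\Beta^+ = \|\Beta\|^{-2}\Beta + \Id_\mgo$. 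Thus $k^{-1}\Beta^+ k = \Beta^+$ and $l_\Beta(t+T) = l_\Beta(t)$.

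The step I expect to require the most care is the inclusion $\psi \in \QbHmo$: one must verify that $\Ad(\phi)$ lands in the parabolic $\Qb$ (using stabilizer-inclusion into $\Qb$) and handle the $\Ad(\Hho)$-equivariance aspects (since $\phi$ only normalizes $\Hho$, it is merely twisted-equivariant under $\Ad(\Hho)$). Both should be consequences of the structure theory for the Ness--Kirwan--Lauret stratification in Appendix~\ref{app_beta}; the rest of the argument is a straightforward trace identity.
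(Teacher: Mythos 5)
Your proof is correct and follows essentially the same route as the paper's: identify the monodromy element $f\in\Go$ with $f\cdot\gamma(t)=\gamma(t+T)$, use Lemma \ref{lem_Adf} and Corollary \ref{cor_shape} to transport $g_t$ and $L_t$ by $\Ad f$, observe that $q_{t+T}^{-1}(\Ad f)\,q_t$ lies in $\Or\cap\Qb=\Kb$ (using $\Aut(\ggoo)\subset\Qb$ from Theorem \ref{thm_beta}) and hence centralizes $\Beta^+$, and conclude by cyclicity of the trace. The only cosmetic differences are that you verify $l_\Beta(t+T)=l_\Beta(t)$ for all $t$ rather than just at $t=0$, and that your $k=q_{t+T}^{-1}\psi q_t$ plays the role of the paper's $B=q_T^{-1}(\Ad f)$.
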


\begin{proof}
Assuming that $\gamma(T) \in \Sigma_0=\Go/\Hho$ for $T>0$, 
it is enough to show $l_\Beta(0) = l_\Beta(T)$. Let
$f\in \Go$ with $f\cdot \gamma(0) = \gamma(T)$.
Since $f$ normalizes $\Hho$,  
we have  $q_T \cdot \bar g = (\Ad_\mgo f) \cdot q_0 \cdot \bar g$ by Lemma \ref{lem_Adf}, which implies $q_T^{-1} \cdot (\Ad_\mgo f) \in \Omo$
using  $q_0 = \Id_\mgo$.
On the other hand, after extending $q_T$ to all of $\ggoo$ (as in \eqref{eqn_Gminclusion}), we have that $q_T \in \Qb$ by \eqref{eqn_qt} and \eqref{eqn_QbHm}. 
This, together with $\Ad f \in \Aut(\ggoo) \subset \Qb$, see Theorem \ref{thm_beta}, gives
$B:=q_T^{-1} \cdot (\Ad f)  \in \Or(\ggoo) \cap \Qb = \Kb$.
In particular $B$ commutes with $\Beta$. Thus, 
setting $A:= (\Ad f)|_\mg$  we obtain by Corollary \ref{cor_shape}
\begin{align*}
	l_\Beta(T) =  \tr (L_T \, q_T\,  \Beta^+  q_T^{-1}) 
	=  \tr (A L_0  A^{-1} q_T \Beta^+ q_T^{-1} )
		= \tr (L_0 B^{-1} \Beta^+ B )  = l_\Beta(0)\,,
\end{align*}
concluding the proof.
\end{proof}

We turn now to an apriori estimate for $h$, which holds for arbitrary homogeneous spaces:

\begin{lemma}\label{lem_ricHestimate}
For all $t\in \RR$ we have $- \ricci_t(\mcvot, \mcvot)\cdot\tr \Beta^+  \geq 4 \,h^2(t) $.
\end{lemma}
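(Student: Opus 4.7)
The inequality holds trivially when $\mcvot=0$, since $\tr\Beta^+\geq 0$ ($\Beta^+$ being $\bar g$-symmetric and positive semidefinite, as noted after Definition \ref{def_l}). Assume therefore $\mcvot\neq 0$. My first step is to pass from $\ricci_t$ to the modified Ricci tensor $\riccim_t$, which is the one directly connected to the GIT data of Appendix \ref{app_beta}. By Remark \ref{rem_defh}, $\Ricci_{g_t}=\Riccim_{g_t}-S^{g_t}(\ad\mcvot)$, and evaluated on $\mcvot$ the correction term vanishes:
\[
  g_t(S^{g_t}(\ad\mcvot)\mcvot,\mcvot)\;=\;g_t([\mcvot,\mcvot],\mcvot)\;=\;0.
\]
Using $|\mcvot|^2_{g_t}=g_t(\mcvot,\mcvot)=\tr\ad\mcvot$ from \eqref{eqn_mcv}, the lemma is equivalent to
\[
  -\riccim_t(\mcvot,\mcvot)\cdot\tr\Beta^+ \;\geq\; (\tr\ad\mcvot)^2.
\]

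I would then deduce this as a trace Cauchy--Schwarz estimate. Since $q_t\in\Qb\cap\Gmo$, the conjugate $q_t\Beta^+ q_t^{-1}$ is $g_t$-symmetric, positive semidefinite, has image inside the nilradical $\ngoo\subset\mgo$, and satisfies $\tr(q_t\Beta^+q_t^{-1})=\tr\Beta^+$. I would pair it against a $g_t$-symmetric operator $B_t$ built from $\ad\mcvot$ --- most naturally the symmetric part $S^{g_t}(\ad_\mgo\mcvot)$ suitably restricted to the range of $\Beta^+$ --- arranged so that
\[
  \tr\!\bigl((q_t\Beta^+q_t^{-1})\cdot B_t\bigr)\;=\;\tr\ad\mcvot.
\]
Cauchy--Schwarz for the trace form then yields $(\tr\ad\mcvot)^2\leq\tr\Beta^+\cdot\tr(B_t^2)$, and the standard formula for $\riccim_t$ on a homogeneous space (in which $-\riccim_t(X,X)$ is expressed as $\tfrac12|S(\ad_\mgo X)|^2_{g_t}$ plus further nonnegative squared-bracket contributions) gives $\tr(B_t^2)\leq -\riccim_t(\mcvot,\mcvot)$, closing the argument.

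The main obstacle is the explicit construction of $B_t$ together with the verification of both the pairing identity and the Ricci bound with precisely the right constants. This relies crucially on the GIT description of $\Beta^+$ from Appendix \ref{app_beta}: $\Beta^+$ is the \emph{optimal} direction distinguishing the stratum containing $\mu_t$, and its support on $\ngoo$ precisely matches where the mean curvature registers, as the unimodular subalgebra satisfies $\tr\ad=0$ and only the $\ngoo$-action of $\ad\mcvot$ contributes to $\tr\ad\mcvot$. The parabolic containment $q_t\in\Qb$ ensures that $q_t\Beta^+q_t^{-1}$ retains its block structure along the curve, so the estimate holds uniformly in $t$ rather than merely at $t=0$.
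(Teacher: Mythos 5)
Your overall strategy --- a trace Cauchy--Schwarz inequality pairing $q_t\Beta^+q_t^{-1}$ against a $g_t$-symmetric operator built from $\ad\mcvot$ --- is exactly the paper's, but the two steps you defer as ``the main obstacle'' are precisely where the content of the lemma lives, and as sketched they do not close. First, the Ricci input: you propose a general bound of the form $-\riccim_t(X,X)\geq\tfrac12\Vert S_t(\ad_\mgo X)\Vert_t^2+(\text{nonneg.})$, which with $B_t=S_t(\ad_\mgo\mcvot)$ only yields $\tr(B_t^2)\leq 2\,(-\riccim_t(\mcvot,\mcvot))$ and hence $-\ricci_t(\mcvot,\mcvot)\cdot\tr\Beta^+\geq 2h^2$ --- a factor of $2$ short. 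The constant is not cosmetic: Lemma \ref{lem_betapgeqh} needs exactly $\tr\Beta^+\geq 2h$ to feed into Lemma \ref{lem_lmonotone}. The paper avoids this by quoting the \emph{exact} identity from \cite{AL16}, valid for the mean curvature vector specifically, $\ricci_t(\mcvot,\mcvot)=-\Vert S_t(\ad_\mgo\mcvot)\Vert_t^2$, which also makes your detour through $\riccim_t$ unnecessary (though your observation that the correction term vanishes on $\mcvot$ is correct).

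Second, the pairing identity. No ``restriction to the range of $\Beta^+$'' is needed or wanted: one pairs $S_t(\ad_\mgo\mcvot)$ against $q_t\Beta^+q_t^{-1}$ as is, and since the latter is $g_t$-symmetric, $\la S_t(\ad_\mgo\mcvot),\,q_t\Beta^+q_t^{-1}\ra_t=\tr\big(\ad_\mgo(\mcvot)\,q_t\Beta^+q_t^{-1}\big)$. The identity $\tr\big(q_t^{-1}\ad(\mcvot)q_t\,\Beta^+\big)=\tr\ad\mcvot=2h(t)$ is then not a support/unimodularity statement as in your heuristic; it comes from GIT: $\ad\mcvot\in\Der(\ggoo)\subset\slgb$ by Theorem \ref{thm_beta}, $\slgb$ is an ideal of $\qgb$ so $q_t^{-1}\ad(\mcvot)q_t\in\slgb$ (this is where $q_t\in\Qb$ enters), and $\slgb$ is trace-orthogonal to $\Beta$, so only the $\Id$-summand of $\Beta^+=\Beta/\Vert\Beta\Vert^2+\Id$ contributes to the trace. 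Finally, your Cauchy--Schwarz bound silently uses $\Vert q_t\Beta^+q_t^{-1}\Vert_t^2=\tr((\Beta^+)^2)=\tr\Beta^+$, a one-line computation from the normalization $\tr\Beta=-1$ that should be recorded. With these three pieces supplied, your outline becomes the paper's proof.
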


\begin{proof}
By  \cite{AL16} we have
\[
	\ricci_t (\mcvot, \mcvot) = - \Vert S_t(\ad_\mgo \mcvot)\Vert_t^2.
\]
Here, $\ad_\mgo (\mcvot) = \proj_\mgo \circ  \ad (\mcvot) |_\mgo \in \End(\mgo)$, $\proj_\mgo$ denotes the projection with respect to the decomposition $\ggoo = \hgo \oplus \mgo$, and for an
endomorphism $E\in \End(\mgo)$ we denote its $g_t$-symmetric part by
$S_t(E) = \unm(E+E^T)$ and its $g_t$-norm squared by
$\Vert E \Vert_t^2 = \la E, E \ra_t := \tr (E E^T)$,
transposes taken with respect to the metric $g_t$.
By Cauchy-Schwarz inequality
 we have
\[
	\Vert S_t(\ad_{\mgo} (\mcv_t))\Vert_t^2 \, 
\cdot	
	\,  \Vert  q_t \Beta^+ q_t^{-1} \Vert_t^2 \geq \big\la  S_t(\ad_{\mgo} (\mcv_t)), q_t \Beta^+ q_t^{-1} \big\ra_t^2.
\]
Using that $q_t \Beta^+ q_t^{-1}$ and $S_t(\ad_{\mgo} (\mcv_t) )$ are $g_t$-symmetric, we may rewrite the latter as
\[
	-\ricci_t (\mcvot, \mcvot) 
	\, \cdot \, 
	 \tr ((\Beta^+)^2 )
	\, \geq  \,
	\big( \tr  (\ad_{\mgo} (\mcv_t) q_t \Beta^+ q_t^{-1}  ) \big)^2.
\]
A short computation using that $\tr \Beta = -1$ shows now
 $\tr ((\Beta^+)^2) = \tr \Beta^+$. On the other hand, Theorem \ref{thm_beta} yields $\ad (\mcvot) \in \slgb$, and since $\slgb$ is an ideal in $\qgb$ we  have  $q_t^{-1} \ad (\mcv_t) q_t \in \slgb$. Since $\tr (E_\Beta \cdot\Beta)=0$ for all
$E_\beta \in \slgb$ this implies by the very definition of $\Beta^+$ that
\[
	\tr_\mgo \big( q_t^{-1} \ad_\mgo (\mcv_t) q_t \Beta^+ \big) 
	= \tr_\ggoo \big( q_t^{-1} \ad (\mcv_t) q_t \Beta^+ \big) 
	= \tr \ad_{\mgo} (\mcv_t) = 2 \, h(t)
\]
using \eqref{eqn_mcv} and $[\mgo,\hgo]\subset \mgo$. 
This shows the claim.
\end{proof}


\begin{lemma}[Maximum principle]\label{lem_betapgeqh}
Let  $(M^n,g)$ be orbit-Einstein with Einstein constant $-1$ and 
suppose that $M^n/\Go = S^1$. Then, 
$\tr \Beta^+ \geq 2 \, h(t)$ for all $t\in \RR$.  
\end{lemma}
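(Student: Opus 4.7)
The plan is to apply a standard maximum-principle argument to the non-negative function $h(t)$, combining the evolution equation from Lemma \ref{lem_h} with the a priori curvature estimate from Lemma \ref{lem_ricHestimate}.

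First I would establish that $h$ is periodic. The hypothesis $M^n/\Go = S^1$ furnishes some $T>0$ and $f\in \Go$ with $f\cdot \gamma(0)=\gamma(T)$, and the argument of Lemma \ref{lem_lperiodic} adapts almost verbatim: since $f$ is an isometry of $(M^n,g)$ mapping $\Sigma_t$ to $\Sigma_{t+T}$ and since $h(t)$ is an intrinsic invariant of $(g_t, \ad|_{\mgo})$, we obtain $h(t+T)=h(t)$ for all $t\in \RR$. Being continuous, non-negative and periodic, $h$ attains its global maximum at some $t_0\in \RR$, where $h'(t_0)=0$ and $h''(t_0)\leq 0$.

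Next I would evaluate the ODE of Lemma \ref{lem_h} at $t_0$. Using the orbit-Einstein hypothesis together with $h'(t_0)=0$ we get
\[
    0 \,\geq\, h''(t_0) \,=\, 2\,\Vert L_{t_0}\mcvo_{t_0}\Vert_{t_0}^2 \,-\, 2\,h(t_0) \,-\, \ricci_{t_0}(\mcvo_{t_0},\mcvo_{t_0}),
\]
so, discarding the non-negative shape-operator term, $2\,h(t_0)\geq -\ricci_{t_0}(\mcvo_{t_0},\mcvo_{t_0})$. Now I would invoke Lemma \ref{lem_ricHestimate}. If $\tr\Beta^+=0$ the lemma forces $h\equiv 0$ and the conclusion is immediate; otherwise $-\ricci_{t_0}(\mcvo_{t_0},\mcvo_{t_0})\geq 4\,h^2(t_0)/\tr\Beta^+$, producing the quadratic inequality
\[
    \tr\Beta^+ \cdot h(t_0) \,\geq\, 2\,h^2(t_0).
\]
Either $h(t_0)=0$, in which case $h\equiv 0$ by non-negativity, or else dividing by $h(t_0)>0$ yields $\tr\Beta^+\geq 2\,h(t_0)\geq 2\,h(t)$ for every $t\in\RR$, as claimed.

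The heavy lifting has already been done in the two preceding lemmas: Lemma \ref{lem_h} encodes the orbit-Einstein second-order condition as a Jacobi-type ODE for $h$, and Lemma \ref{lem_ricHestimate} converts the sharp $\Beta$-estimate into a quadratic bound on $-\ricci_t(\mcvo_t,\mcvo_t)$. The only genuinely new ingredient needed here is the periodicity of $h$, and I do not expect this to pose any serious obstacle, since it is a direct adaptation of Lemma \ref{lem_lperiodic}. The only subtle point to verify carefully is that the term $2\Vert L_{t_0}\mcvo_{t_0}\Vert_{t_0}^2$ can simply be dropped — which is what allows the quadratic-in-$h$ inequality to self-improve into a linear bound.
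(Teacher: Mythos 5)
Your proposal is correct and follows essentially the same route as the paper: periodicity of $h$ (being geometrically defined), evaluation of the ODE from Lemma \ref{lem_h} at an interior maximum, discarding the non-negative term $2\Vert L_{t_0}\mcvo_{t_0}\Vert_{t_0}^2$, and feeding in the quadratic estimate of Lemma \ref{lem_ricHestimate} to get $\tr\Beta^+\geq 2h(t_0)$. The only cosmetic difference is that the paper splits off the unimodular case ($h\equiv 0$) at the outset rather than the case $\tr\Beta^+=0$, but the two dichotomies lead to the same conclusion.
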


\begin{proof}
Since $\Beta^+ \geq 0$ by Theorem \ref{thm_beta},
 $h(t)\equiv 0 \leq \tr \Beta^+$ if $\Go$ is unimodular:
see \mbox{Remark \ref{rem_defh}}.
If $\Go$ is not unimodular, then  $h, \tr \Beta^+>0$. 
Since $M^n/\Go = S^1$, $h$ is periodic, since it is defined geometrically:
see also Remark \ref{rem_defh}. Thus
 it attains a global maximum $h(t_0) > 0$, $t_0\in [0,T]$. 
We deduce from Lemma \ref{lem_h} and Lemma \ref{lem_ricHestimate} that
\[
	0 \, \geq \,  2 \, \Vert L_{t_0} \mcv_{t_0}\Vert_{t_0}^2  - 2\, h (t_0) - \ricci_{t_0} (\mcv_{t_0}, \mcv_{t_0}) 
	\geq 
	2 \, h(t_0) \,  \left( \tfrac{2 h(t_0)}{ \tr \Beta^+} - 1  \right).
\]
Hence, $\tr \Beta^+ \geq 2 \, h(t_0) \geq  2 \, h(t)$ for all $t\in \RR$, as claimed.
\end{proof}


\begin{lemma}[Lyapunov function]\label{lem_lmonotone}
Let $(M^n,g)$ be orbit-Einstein with Einstein constant $-1$ and
suppose that $M^n/\Go = S^1$.
 Then 
\[
	w_\Beta:\RR \to \RR\,\,;\,\,\,t\mapsto	l_\Beta(t)\cdot e^{\int_0^t \tr L_s ds}
\]
is non-decreasing. If $w_\Beta$ is constant, then $\Beta^+ \in \Der(\ggoo)$ and $[q_t, \Beta] = 0$ for all $t\in \RR$.
\end{lemma}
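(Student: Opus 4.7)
The plan is to differentiate $w_\Beta$ using the evolution equations derived above, and to show that the resulting derivative is non-negative by invoking the real GIT estimates encoded in Appendix \ref{app_beta}. Since $\tfrac{d}{dt} e^{\int_0^t \tr L_s\,ds} = (\tr L_t)\, e^{\int_0^t \tr L_s\,ds}$, it suffices to prove
\[
l_\Beta'(t) + (\tr L_t)\,l_\Beta(t) \;\geq\; 0\,, \qquad (\star)
\]
with equality characterising the rigid case. Set $P_t := q_t \Beta^+ q_t^{-1}$ and $A_t := q_t'q_t^{-1}$, so $P_t' = [A_t, P_t]$ and $\tr P_t = \tr \Beta^+$. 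A direct computation combined with the Riccati equation from Proposition \ref{prop_ddtLt} gives
\[
l_\Beta'(t) \;=\; -(\tr L_t)\,l_\Beta(t) + \tr(\Ricci_t P_t) + \tr \Beta^+ + \tr\bigl([P_t, L_t]\,A_t\bigr),
\]
so the $(\tr L_t)$-terms cancel and $(\star)$ reduces to non-negativity of $\Phi(t):=\tr(\Ricci_t P_t) + \tr \Beta^+ + \tr([P_t, L_t]\,A_t)$.

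The next step is to exploit the parabolic structure. Since $q_t \in \QbHmo \subset \Qb$, the velocity $A_t$ lies in $\qgb$, the non-negative $\ad(\Beta)$-eigenspace of $\End(\ggoo)$, while Lemma \ref{lem_ddtgt} forces its $g_t$-symmetric part to equal $-L_t$. This decomposes $A_t$ into an $\ad(\Beta)$-null piece (whose $g_t$-symmetric part is $-L_t$) and a strictly positive-$\ad(\Beta)$ nilpotent piece; the commutator term $\tr([P_t,L_t]\,A_t)$ can then be rewritten as a quadratic form in the nilpotent piece plus contributions that combine favourably with the Ricci term.

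The decisive ingredient is the GIT Ricci estimate from Appendix \ref{app_beta}: writing $\Ricci_{t}$ via the standard decomposition for homogeneous spaces as moment-map plus Killing-form term minus $S_t(\ad \mcvot)$, and using that the transported bracket $q_t^{-1}\cdot \mug$ lies in the stratum $\Sb$, the Kempf--Ness-type moment-map inequality bounds $\tr(\Ricci_t P_t) + \tr \Beta^+$ from below by $-\tr([P_t,L_t]\,A_t)$ plus a manifestly non-negative quadratic form. This proves $\Phi(t)\geq 0$, hence $(\star)$.

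For the rigidity statement, if $w_\Beta$ is constant then $\Phi \equiv 0$ and every intermediate inequality is saturated. Equality in the moment-map bound forces $q_t^{-1}\cdot \mug$ to realise the minimum of $\Vert \cdot \Vert^2$ along its $\Qb$-orbit, which algebraically translates to $[q_t,\Beta]=0$ and hence $P_t \equiv \Beta^+$ for every $t$; equality in the remaining Ricci estimate then forces $\Beta^+$ to preserve the bracket, i.e.\ $\Beta^+ \in \Der(\ggoo)$. The main obstacle is producing $\Phi(t)\geq 0$ in this sharp form: routine versions of the $\beta$-stratum estimate yield only coarser bounds, and matching the Riccati-generated commutator $\tr([P_t,L_t]\,A_t)$ against the GIT surplus requires an optimal choice of $\Omo$-coset representative of $q_t$, exploiting the freedom recorded in \eqref{eqn_qt}, together with precise tracking of how $\qgb$ acts on Lie brackets under the extended representation of $\Gl(\ggoo)$.
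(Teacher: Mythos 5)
Your setup is the same as the paper's: differentiate $w_\Beta$, reduce to $l_\Beta'(t)+(\tr L_t)\,l_\Beta(t)\geq 0$, substitute the Riccati equation from Proposition \ref{prop_ddtLt}, and split the result into a Ricci term $\tr\bigl((\Ricci_t+\Id_\mgo)\,q_t\Beta^+q_t^{-1}\bigr)$ and a commutator term $\tr\bigl(L_t[q_t'q_t^{-1},\,q_t\Beta^+q_t^{-1}]\bigr)$. From that point on, however, your argument has a genuine gap. You claim that the moment-map inequality bounds $\tr(\Ricci_tP_t)+\tr\Beta^+$ from below by $-\tr([P_t,L_t]A_t)$ plus a non-negative form, and you concede that producing this ``sharp'' coupled estimate is the main obstacle. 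No such coupled estimate exists in the paper, and none is needed: the two terms are handled \emph{separately}. The commutator term is non-negative on its own by the purely linear-algebraic Lemma \ref{lem_LQbeta} (after conjugating by $q_t$ so that $S=q_t^{-1}L_tq_t$ is $\bar g$-symmetric and $Q=q_t^{-1}q_t'\in\qgb$ with $S+Q$ skew-symmetric; the term equals $2\sum_{\lambda>0}\lambda\Vert S_\lambda\Vert^2\geq 0$). You have the right ingredients for this --- the eigenspace decomposition of $\qgb$ under $\ad(\Beta)$ and the relation between $L_t$ and the symmetric part of $A_t$ --- but you never close the argument, and instead try to offload the burden onto the Ricci term.

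The second, more serious omission is that you never invoke the maximum principle, Lemma \ref{lem_betapgeqh}. The GIT estimate of Theorem \ref{thm_beta}(3) gives only $\tr(\Ricci_tq_t\Beta^+q_t^{-1})\geq -2h(t)$, so $\tr\bigl((\Ricci_t+\Id_\mgo)q_t\Beta^+q_t^{-1}\bigr)\geq -2h(t)+\tr\Beta^+$, and without the a priori bound $\tr\Beta^+\geq 2h(t)$ this right-hand side has no sign. That bound is exactly what Lemmas \ref{lem_ricHestimate} and \ref{lem_betapgeqh} (using periodicity of $h$ and the orbit-Einstein equation) were set up to provide; it is the load-bearing input of the whole section and cannot be replaced by ``an optimal choice of $\Omo$-coset representative of $q_t$''. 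Your rigidity discussion has a corresponding gap: the conclusion $[q_t,\Beta]=0$ does not come from a Kempf--Ness minimality statement but from equality in Lemma \ref{lem_LQbeta}, which gives $[q_t^{-1}q_t',\Beta]=0$ pointwise, whence $C_t:=[q_t,\Beta]$ solves $C_t'=C_t\,(q_t^{-1}q_t')$ with $C_0=0$ and vanishes identically; only then does equality in the Ricci estimate yield $\Beta^+\in\Der(\ggoo)$.
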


\begin{proof}
Thanks to Proposition \ref{prop_ddtLt} and Definition \ref{def_l}
we have
\begin{align*}
	l_\Beta'(t) &= \tr \big( L'_t \, q_t \, \Beta^+ q^{-1}_t\big) 
	+ \tr \big( L_t \big[  q'_tq^{-1}_t, q_t \Beta^+ q^{-1}_t \big]   \big) \\
	&= \tr \big( (\Ricci_t + \Id_\mg) \, q_t \, \Beta^+ q^{-1}_t\big) 
	+ \tr \big(  (q^{-1}_t L_t q_t) \, \big[  q^{-1}_t q'_t, \Beta \big] \big)
	-(\tr L_t)\cdot l_\Beta(t)\,.
\end{align*}
Regarding the second term, first notice that for $X,Y \in \mgo$ we have
by Lemma \ref{lem_ddtgt} 
\[
	2g_t(L_tX,Y)=g'_t(X,Y) = (q_t \cdot \bar g)'(X,Y) = 
	- g_t\big( (q'_t q^{-1}_t) X ,Y \big)
	 - g_t \big( X, (q'_t q^{-1}_t) Y \big)\,.
\]
Thus, $L_t + q_t' q_t^{-1} \in \sog(\mgo, g_t)$ or equivalently
$q_t^{-1} (L_t + q_t' q_t^{-1}) q_t \in \sog(\mgo,\bar g)$ for all
 $t\in \RR$, since $g_t = q_t \cdot \bar g$. Since 
 $q_t^{-1} L_t q_t$ is $\bar g$-symmetric for each $t$ and 
  $q_t^{-1} q_t' \in \qgb=T_e \Qb$ by \eqref{eqn_qt} and \eqref{eqn_QbHm}, after extending these linear maps trivially to $\glg(\ggoo)$,
 we are in position to apply Lemma \ref{lem_LQbeta} for
  $S = q_t^{-1} L_t q_t$ and  $Q = q_t^{-1} q_t'$. This yields
\begin{equation}\label{eqn_secondterm}
	\tr \big(  (q^{-1}_t L_t q_t) \, \big[  q^{-1}_t q'_t, \Beta \big] \big) \geq 0\,,
\end{equation}
with equality if and only if $[q_t^{-1} q_t', \Beta] = 0$.

Regarding the first term, notice that by Theorem \ref{thm_beta} we have
\begin{equation}\label{eqn_ricbetageq0}
	\tr (\Ricci_t q_t \beta^+ q_t^{-1}) \geq - 2 \, h(t),
\end{equation}
 with equality if and only if 
 $q_t (\tfrac{1}{\Vert \Beta \Vert^2}\Beta +  \Id_\ggo) q_t^{-1} \in \Der(\ggoo)$. 
  Thus, 
 \[
 	\tr \big( (\Ricci_t + \Id_\mg) \, q_t \, \Beta^+ q^{-1}_t\big) \geq  -2 \, h(t) + \tr \Beta^+  \geq 0
 \]
by Lemma \ref{lem_betapgeqh}.
We conclude that $l_\Beta'(t)+l_\Beta(t) \tr L_t \geq 0$, showing the first claim.

Finally, suppose $w_\Beta'\equiv 0$ and set $C_t:=[q_t, \Beta] $ for all $t\in\RR$. Then from equality in \eqref{eqn_secondterm} we deduce
$C_t'=C_t \cdot (q_t^{-1}q_t')$ with $C_0=0$, since $q_0 = \Id_\mgo$. 
This shows $C_t \equiv 0$.
From the equality condition in \eqref{eqn_ricbetageq0} we then obtain $\Beta^+ \in \Der(\ggoo)$.
\end{proof}

\begin{proof}[Proof of Theorem \ref{thm_main2}]
We have $\int_0^T \tr L_s \,ds =0$,
since by assumption the $\Go$-orbits are integrally minimal. As a consequence, by Lemma \ref{lem_lperiodic} and Lemma \ref{lem_lmonotone}, the function $w_\Beta(t)$ is monotone and periodic, therefore constant. The rigidity conditions in Lemma \ref{lem_lmonotone} imply that $\beta^+_\ggoo:=
\tfrac{1}{\Vert \Beta \Vert^2}\Beta +  \Id_\ggo \in \Der(\ggo)$ and $[q_t, \Beta] = 0$ for all $t\in \RR$. Using that $0 = [q_t, \Beta] = [q_t,\beta^+_\ggoo]$ and 
$\ngoo = \Im \Beta^+= \Im \Beta^+_\ggoo$, 
we see that with respect to $g_t = q_t \cdot \bar g$, the $g_t$-orthogonal complement of $\ngoo$ in $\ggoo$ is a fixed subspace $\ngoo^\perp$ of $\ggoo$, for all $t\in \RR$.  Since $\Beta^+_\ggoo$ 
is $g_0$-symmetric (and in fact also $g_t$-symmetric for all $t\in\RR$), $\ngoo^\perp = \ker \Beta^+_\ggoo$. But  
the kernel of a derivation is always a Lie subalgebra, therefore all orbits are standard homogeneous spaces as claimed.
\end{proof}

\begin{remark}\label{rem_intmin}
When writing $g_t ( \,\cdot \, ,\, \cdot )
	=2\, \bar g(G_t \,\cdot \, ,\, \cdot )$ for a 
	$\Go$-invariant background metric $\bar g$ on $\Go/\Hho$,
	 integral minimality is equivalent to the periodicity of
	the volume $V(t)=\det G_t$ because $\tr L_t =V'(t)/V(t)$.
Periodicity of $V$ is automatically satisfied if $\Go$ is unimodular, but a true  assumption if $\Go$ is not: one may declare any smooth curve $\gamma:[0,T] \to M^n$ intersecting all orbits transversally to a normal geodesic if $\gamma'(T)=(dL_{\bar g})\cdot \gamma'(0)$, see \cite{GWZ}.
\end{remark}

\section{Euclidean homogeneous spaces}\label{sec_homspaces}

The Euclidean space $\RR^n$ can be presented as an effective homogeneous space $\G/\Hh$ in many different ways. Here and in what follows, $\G$ is a non-compact,
connected  Lie group and $\Hh\leq \G$ a connected, compact subgroup.
Recall that $\Hh$ is compact if and only if 
$\G$ is closed in $\Iso(\G/\Hh,g)$ for some (and also any) $\G$-invariant
metric $g$: see \cite[Thm.1.1]{Dtt88}. For instance, $\RR^n$ is itself an abelian Lie group, and more generally a solvmanifold, that is
 $\RR^n=\G/\{e \}$ for any simply-connected, solvable 
Lie group $\G$ of dimension $n$.  
Any non-compact irreducible symmetric space 
has a presentation  $\RR^n=\Ll/\K$ with $\Ll$
simple and centerless, and $\K\leq \Ll$  maximal compact.
Another possibility is that $\G$ is the universal covering group of $\Sl(2,\RR)$ and $\Hh = \{e\}$, or a product of any of the above examples.

The main aim of this section is to characterize certain effective presentations $\RR^n=\G/\Hh$,  where $\dim \G$ has minimal dimension. Let $\ggo = \lgo \ltimes \rg$ be a  \emph{Levi decomposition} 
of  $\Lie(\G)=\ggo$, by which we mean that $\lgo$ is a maximal semisimple Lie subalgebra and $\rg$ the radical of 
$\ggo$ (maximal solvable ideal). Let 
 $\Ll, \Rr \leq \G$ denote the connected Lie sugbroups of $\G$
 with $\Lie(\Ll)=\lgo$ and $\Lie(\Rr)=\rg$, and
suppose that $\Lie(\Hh)=\hg \subset \lgo$.   
Notice that for non-compact homogeneous Einstein spaces, the structure results 
yield such a Levi decomposition, see Section \ref{sec_proofmain}.

\begin{proposition}\label{prop_Rnhomognew}
Let $(\RR^n,g)$ be a homogeneous Euclidean space with effective presentation $\RR^n=\G/\Hh$, $\G$ connected, $\Hh$ compact and $\dim \G$ minimal. Assume in addition that there exists a Levi decomposition $\ggo = \lgo \ltimes \rg$ with $\hg \subset \lgo$. Then, $\G \simeq \Ll \ltimes \Rr$, $\Hh$ is semisimple, and $\Ll/\Hh$ is  an $\RR^r$-bundle over a Hermitian symmetric space of 
non-compact type.  
\end{proposition}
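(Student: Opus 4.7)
\emph{Plan.} We establish the three conclusions in order.

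\emph{Semidirect product structure.} The Lie algebra decomposition $\ggo = \lgo \ltimes \rg$ induces a surjective Lie group homomorphism $\phi \colon \Ll \ltimes \Rr \to \G$ with discrete central kernel $\Ll \cap \Rr$. The plan is to show $\Ll \cap \Rr = \{e\}$ by combining simple connectedness of $\G/\Hh \simeq \RR^n$ with the minimality of $\dim \G$: any non-trivial element would either induce a non-trivial cover of $\G/\Hh$ (ruled out by simple connectedness) or yield a strictly simpler effective presentation of $\RR^n$ via a quotient construction (ruled out by minimality). The auxiliary observation $\hg \cap \rg = 0$ (from $\hg \subset \lgo$) ensures $\Hh \cap \Rr$ is finite, which is needed for the quotient argument to preserve compactness of the isotropy.

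\emph{Semisimplicity of $\Hh$.} Write $\Hh = \T_0 \cdot \Hh_{\mathrm{ss}}$ with $\T_0 = Z(\Hh)^0$ a compact torus, and suppose for contradiction $\T_0 \neq \{e\}$. The strategy is to construct a proper connected subgroup $\G' \lneq \G$ still acting transitively on $\G/\Hh = \RR^n$ with compact isotropy $\Hh' \supsetneq \Hh$, contradicting minimality. Concretely, I would pick a compact torus $\T' \supsetneq \T_0$ inside the centralizer of $\Hh_{\mathrm{ss}}$ within a maximal compact subalgebra of $\lgo$ containing $\hg$, set $\Hh' := \T' \cdot \Hh_{\mathrm{ss}}$, and define $\G'$ as a suitable complement satisfying $\G' \cdot \Hh' = \G$ and $\G' \cap \Hh' = \Hh'$. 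This absorbs additional torus directions into the isotropy, decreasing the ambient dimension by $\dim(\T'/\T_0) > 0$.

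\emph{Bundle structure.} The semidirect product provides a fibration $\G/\Hh \to \Ll/\Hh$ with fiber $\Rr$. The long exact sequence of $\Hh \to \G \to \G/\Hh$ and the contractibility of $\G/\Hh = \RR^n$ give $\pi_1(\G) \simeq \pi_1(\Hh)$, finite by compact semisimplicity of $\Hh$; combined with $\pi_1(\G) \simeq \pi_1(\Ll) \times \pi_1(\Rr)$ and the torsion-freeness of $\pi_1$ of a connected solvable Lie group, this forces $\pi_1(\Rr) = 0$ and hence $\Rr \simeq \RR^{\dim \rg}$ is contractible; therefore so is $\Ll/\Hh$. Fix a Cartan decomposition $\lgo = \kg \oplus \pg$ with $\hg \subseteq \kg$, and let $\K \leq \Ll$ be the connected subgroup with Lie algebra $\kg$. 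Then $\Ll/\K$ is a contractible non-compact symmetric space, and the fiber $\K/\Hh$ of $\Ll/\Hh \to \Ll/\K$ is contractible as well. Semisimplicity of $\Hh$ implies $\Hh \subseteq [\K,\K]$, so $\K/\Hh$ fibers over the connected abelian group $\K/[\K,\K]$ with compact fiber $[\K,\K]/\Hh$. The long exact homotopy sequence, applied to a fibration with contractible total space, forces $[\K,\K]/\Hh$ to be a point (a compact contractible homogeneous manifold is trivial) and $\K/[\K,\K] \simeq \RR^r$ (its fundamental group must vanish, so no toral component survives). Hence $\Hh = [\K,\K]$, and $\Ll/\Hh$ is an $\RR^r$-bundle over $\Ll/\K$. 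The non-trivial centre of $\kg$ (identified with $\RR^r$) identifies $\Ll/\K$ as a Hermitian symmetric space of non-compact type, by the classical characterization.

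\emph{Main obstacle.} The first two steps are the most subtle: the dimension-minimality of $\G$ must be converted into rigid algebraic constraints. Ruling out non-trivial discrete intersections $\Ll \cap \Rr$ requires combining topology with minimality (with care that quotients preserve compact isotropy and effectiveness), and the torus-absorption construction in the semisimplicity step demands a careful verification that the candidate subgroup $\G'$ really acts transitively on $\G/\Hh$. The third step is comparatively classical, resting on the long exact homotopy sequence and the characterization of Hermitian symmetric spaces via the centre of the isotropy.
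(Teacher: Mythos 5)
Your proposal has the right overall target but two of its three steps contain genuine gaps, and the mechanism the paper actually relies on — Iwasawa subgroups combined with the minimality of $\dim\G$ — never appears in your argument.

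The most serious problem is the semisimplicity step. You propose to contradict minimality by finding a \emph{proper} subgroup $\G'\lneq\G$ acting transitively on $\G/\Hh$ with isotropy $\Hh'\supsetneq\Hh$. This is dimensionally impossible: transitivity forces $\dim\G'=n+\dim\Hh'\geq n+\dim\Hh=\dim\G$, so enlarging the isotropy can never produce a smaller transitive group (and your condition $\G'\cap\Hh'=\Hh'$ together with $\G'\Hh'=\G$ literally forces $\G'=\G$). The paper's argument goes the other way: one first shows $[\kg,\kg]\subset\hg$ using that $\Hh$ is a \emph{maximal compact} subgroup of $\G$ (this is where $\G/\Hh\simeq\RR^n$ enters, via conjugating the compact group $\K_{ss}$ into $\Hh$ and using that $\kg_{ss}$ is the unique maximal semisimple ideal of $\kg$); then, if $\zg(\hg)\neq 0$, a central vector of $\hg$ projects onto the one-dimensional $\zg(\kg_1)$ of some factor, and the \emph{Iwasawa subgroup} $\bar\G$ with Lie algebra $(\ag_1\oplus\ngo_1)\oplus\lgo_2\oplus\cdots\oplus\lgo_r\ltimes\rg$ satisfies $\hg+\bar\ggo=\ggo$, hence acts transitively with strictly smaller dimension — the contradiction. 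Your proposal contains no substitute for this mechanism. The same omission breaks your third step at the very end: your homotopy-theoretic argument (which, as far as it goes, is a nice alternative route to $\Hh=[\K,\K]$ and to $\Rr$ being simply connected) only yields $r=\dim\zg(\kg)$; it does \emph{not} show that \emph{every} simple factor $(\lgo_i,\kg_i)$ is of Hermitian type, nor that $\lgo$ has no compact simple ideals. For instance $\Sl(3,\RR)/\SO(3)\simeq\RR^5$ passes all of your topological tests with $r=0$ yet is not Hermitian; it is excluded only because $\dim\Sl(3,\RR)=8$ is not minimal (the five-dimensional group $\Aa\N$ acts simply transitively) — again an Iwasawa/minimality argument you never make.

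On the first step, your dichotomy (``a nontrivial element of $\Ll\cap\Rr$ either induces a nontrivial cover of $\G/\Hh$ or yields a simpler presentation'') is not an argument as stated; neither alternative is justified. The clean route, and the one the paper takes, is to pass to the universal cover $\tilde\G\simeq\tilde\Ll\ltimes\tilde\Rr$ (where the Levi decomposition is automatically global), note that $\RR^n$ simply connected forces the isotropy $\tilde\Hh$ to be connected, hence $\ker\tau\leq\tilde\Hh\leq\tilde\Ll$, and observe that quotienting by a subgroup of $\tilde\Ll$ leaves $\tilde\Rr$ untouched, so $\Ll\cap\Rr=\{e\}$ downstairs. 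I would encourage you to keep your homotopy computation in the last step as a complement, but the proof cannot close without the maximal-compact and Iwasawa-subgroup arguments that convert minimality of $\dim\G$ into the algebraic constraints $\kg_{ss}\subset\hg$, $\zg(\hg)=0$, and $\dim\zg(\kg_i)=1$ for every $i$.
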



A \emph{Cartan decomposition} $\lgo = \kg \oplus \pg$ for the semisimple Lie algebra $\lgo$ is obtained by letting $\kg$ (resp.~$\pg$) be the $+1$ (resp.~$-1$) eigenspace of a Cartan involution $\sigma:\lgo \to \lgo$, an involutive automorphism of $\lgo$ such that $-\kf_\lgo(\cdot, \sigma\cdot) > 0$, where $\kf_\lgo$ denotes the Killing form of $\lgo$. The pair $(\kg, \pg)$ satisfies the bracket relations $[\kg,\kg] \subset \kg$, $[\kg,\pg] \subset \pg$, $[\pg,\pg] \subset \kg$. If $\lgo$ has no compact simple ideals, we call $(\lgo,\kg)$  a \emph{symmetric pair of non-compact type}. 
The subalgebra $\kg$ will be called a \emph{maximal compactly embedded subalgebra}. 
Notice however, that the corresponding connected Lie subgroup $\K\leq \Ll$ is compact if and only if $Z(\Ll)$ is finite  \cite[Ch.6, Thm.1.1]{Helgason01}.

A special class of symmetric spaces
are the \emph{Hermitian symmetric spaces},
see \cite[Ch.~VIII, Thm.6.1]{Helgason01} or \cite[7.104]{Bss}. Irreducible ones correspond to irreducible symmetric pairs $( \lgo, \kg)$  with $\dim \zg( \kg) = 1$. Writing $ \kg=\zg( \kg)\oplus
 \kg_{ss}$, we obtain a homogeneous space
$ \Ll/ \K_{ss}$, which is an $S^1$-bundle over the
Hermitian symmetric space $ \Ll/ \K=\RR^k$ (assuming $\Ll$ has finite center). As a consequence, the universal covering space of 
$ \Ll/ \K_{ss}$, still being a
homogeneous space, is diffeomorphic to $\RR^{k+1}$ and on
Lie algebra level still presented by the pair $( \lgo, \kg_{ss})$.
As a smooth manifold it is an $\RR$-bundle over $\Ll/\K$. Considering
products of such $\RR$-bundles  we obtain
more complicated presentations $\Ll/\Hh=\RR^n$, which
on Lie algebra level can be described as follows:
\begin{eqnarray}\label{eqn_RbundleHSS}
	\lgo:= \lgo_1 \oplus \cdots \oplus \lgo_r\,\, ,\quad
	\hg:= \kg_1^{ss} \oplus \cdots \oplus \kg_r^{ss} \oplus \zg(\hg)\,\,,\quad
	\zg(\hg) \subset  \zg(\kg_1 \oplus \cdots \oplus \kg_r)
\end{eqnarray}
where  $(\lgo_i, \kg_i = \zg(\kg_i)\oplus \kg_i^{ss})$
is an irreducible Hermitian symmetric pair of non-compact type, $1 \leq i \leq r$, and $\zg(\hg)$ is a proper subalgebra of $ \zg(\kg_1 \oplus \cdots \oplus \kg_r)$. Now $\hg$ is semisimple if and only if
$\zg(\hg)=0$, in which case 
the homogeneous space $\Ll/\Hh$
corresponding to such pairs $(\lgo,\hg)$ will be called an 
\emph{$\RR^r$-bundle over a non-compact Hermitian symmetric space}.

\begin{remark}\label{rem_euclhom}
Irreducible Hermitian
symmetric spaces $\Ll/\K$  of non-compact type
are given by
 $(\slg(2,\RR),\RR)$,
 $(\sug(p,q), \RR \oplus \sug(p) \oplus \sug(q))$,
 $ p\geq q \geq 1, (p,q)\neq (1,1),(2,2)$, 
 $(\sog^*(2n), \RR \oplus \sug(n))$, $ n \geq 5$,
$(\spg(n,\RR),\RR \oplus \sug(n))$, $ n\geq 2$,
 $(\eg_6, \RR \oplus \sog(10))$, $(\eg_7, \RR \oplus \eg_6) $
and $ (\sog(m,2), \RR \oplus \sog(m))$, $m\geq 3  $,  called hyperquadrics: see  \cite{BB}[p.569]. 

We now describe the space $\mathcal{M}_{\Ll/\K_{ss}}$ of $\Ll$-invariant
 metrics on $\Ll/\K_{ss}$. We will show that for any $g \in 
 \mathcal{M}_{\Ll/\K_{ss}}$ there exists a
Cartan decomposition $\lgo = \kg \oplus \pg$ with
 $\kg \cdot o \perp_g \pg \cdot o$, $o = e\K_{ss}$. 

 Let $\mg=\zg(\kg) \oplus \pg$ be the
 canonicial complement of $\hg=[\kg,\kg]$ in $\lgo$, where
 $\kg=\zg(\kg)\oplus \kg_{ss}$.
 Except for the first and the last cases above, $\pg$ is  $\Ad(\K_{ss})$-irreducible  and non-trivial,
 see \cite {BB} and \cite{WZ86}, and consequently
 $\dim \mathcal{M}_{\Ll/\K_{ss}}=2$ and
 $\kg \cdot o \perp_g \pg \cdot o$.
In the first case, $\mg=\slg(2,\RR)$ is a trivial
$\Ad(\K_{ss})$-module, since $\K_{ss}=\{e\}$. It follows  that
$\dim \mathcal{M}_{\Ll/\K_{ss}}=6$. However, for each 
$g$ there exists a  ``Milnor basis'' \cite{Mln}
of $\slg(2,\RR)$ diagonalising both the metric and the bracket at the same time. Consequently,
for any of the three $\kg=\sog(2)$ adapted to this basis
we have $\kg \cdot o \perp_g \pg \cdot o$.
Finally, for the hyperquadrics, the isotypical summand
$\pg$ is the sum of two $\Ad(\K_{ss})$-irreducible, equivalent, non-trivial
representations of real type: see  \cite {BB} and \cite{WZ86}.
Thus $\dim \mathcal{M}_{\Ll/\K_{ss}}=4$ and again we have
 $\kg \cdot o \perp_g \pg \cdot o$.

\end{remark}

The proof of Proposition \ref{prop_Rnhomognew} will be clear after the following series of lemmas.

\begin{lemma}\label{lem_Leviglobal}
Under the assumptions of Proposition \ref{prop_Rnhomognew}, we have that $\Hh \leq \Ll$ and $\G \simeq \Ll \ltimes \Rr$.
\end{lemma}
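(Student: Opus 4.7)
The first claim is immediate: since $\Hh$ is connected with Lie algebra $\hg \subset \lgo$, and $\Ll$ is by definition the connected Lie subgroup of $\G$ with Lie algebra $\lgo$, we have $\Hh = \langle \exp_\G(\hg)\rangle \leq \langle \exp_\G(\lgo)\rangle = \Ll$. For the second claim, recall that $\Rr$ is closed and normal in $\G$ (being the radical) and that by Malcev's theorem $\G = \Ll \cdot \Rr$; the intersection $F := \Ll \cap \Rr$ has Lie algebra $\lgo \cap \rg = 0$, hence is discrete, and being normal in the connected group $\Ll$ it is moreover central. My plan is to realize $\G$ as a quotient of the abstract semidirect product by $F$, and use the simple connectedness of $\G/\Hh\simeq\RR^n$ to force $F=\{e\}$.

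Concretely, since $\Ll$ acts smoothly on the normal subgroup $\Rr\triangleleft \G$ by conjugation in $\G$, I equip $\wt\G := \Ll \times \Rr$ with the corresponding semidirect Lie group structure. The multiplication map $\pi:\wt\G\to\G$, $(l,r)\mapsto lr$, is then a smooth surjective homomorphism whose kernel is the antidiagonal copy $\{(f,f^{-1}):f\in F\}$ of $F$. Setting $\wt\Hh := \Hh \times \{e\} \leq \wt\G$, one checks that $\ker\pi \cap \wt\Hh = \{e\}$ (an element $(f,f^{-1})$ of $\ker\pi$ lies in $\wt\Hh$ only if $f^{-1}=e$), hence $\pi$ descends to a smooth surjection $\bar\pi: \wt\G/\wt\Hh \to \G/\Hh$, which is a principal $F$-bundle and in particular a covering map with discrete fiber $F$. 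Since $\wt\G = \Ll \times \Rr$ is connected, so is $\wt\G/\wt\Hh$; being a connected covering space of the simply-connected manifold $\G/\Hh \simeq \RR^n$ it must be trivial, so $F = \{e\}$. Then $\pi$ is a Lie group isomorphism and $\G \simeq \Ll \ltimes \Rr$.

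The main obstacle is the verification that $\bar\pi$ is genuinely a covering map, which reduces to routine checks on the smooth semidirect product structure on $\wt\G$ and on the smoothness of the conjugation action of $\Ll$ on $\Rr$ inside $\G$; the discreteness of the fiber is already built in. It is worth noting that the minimality of $\dim\G$ is not required for this lemma in particular, and will be invoked only for the subsequent analysis of $\Ll/\Hh$ in Proposition \ref{prop_Rnhomognew} (e.g.~to rule out normal compact factors of $\Hh$ in $\G$ and to pin down the Hermitian-symmetric structure).
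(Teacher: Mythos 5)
Your proof is correct, but it takes a genuinely different route from the paper's. The paper passes to the universal cover $\tilde\G$: since $\RR^n$ is simply connected, the isotropy $\tilde\Hh$ of the lifted action is connected, hence contained in $\tilde\Ll$; the global Levi decomposition $\tilde\G\simeq\tilde\Ll\ltimes\tilde\Rr$ (valid because $\tilde\G$ is simply connected, \cite[3.18.13]{Varad84}) then descends to $\G=\tilde\G/\ker\tau$ because $\ker\tau\leq\tilde\Hh\leq\tilde\Ll$. You instead stay inside $\G$, build the abstract semidirect product $\Ll\ltimes\Rr$ from the conjugation action, and use simple connectedness of $\G/\Hh$ to kill the discrete central defect $F=\Ll\cap\Rr$ via a covering-space argument. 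Both proofs spend the hypothesis $\pi_1(\RR^n)=0$ at exactly one place (the paper: connectedness of $\tilde\Hh$; you: triviality of $F$), and neither needs minimality of $\dim\G$, as you correctly observe. Your version is somewhat more self-contained in that it only needs the Lie-algebra Levi decomposition, the elementary fact $\G=\Ll\Rr$, and covering space theory, rather than the global Levi--Malcev theorem for simply connected groups; the paper's version is shorter because it delegates to that theorem. Also note that your first claim ($\Hh\leq\Ll$) is indeed immediate from the standing assumption in Section \ref{sec_homspaces} that $\Hh$ is connected, whereas the paper re-derives connectedness of the isotropy upstairs as a byproduct of its covering argument. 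One small point worth making explicit in your write-up: the fibre identification and local triviality of $\bar\pi$ follow most cleanly by observing that $\ker\pi$ is a discrete normal, hence central, subgroup of the connected group $\wt\G$, so that $\pi^{-1}(\Hh)=\wt\Hh\cdot\ker\pi$ and right multiplication by $\ker\pi$ descends to a free action on $\wt\G/\wt\Hh$ with quotient $\G/\Hh$.
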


\begin{proof}
The universal cover $\tilde \G$ of $\G$ acts transitively and almost-effectively on $(\RR^n,g)$, with connected isotropy $\tilde \Hh$ at $0$, since $\RR^n$ is simply-connected. 
Thus, if $\tilde \Ll, \tilde \Rr \leq \tilde \G$ denote  the connected Lie subgroups with Lie algebras $\lgo, \rg$, respectively, 
then  $\hg \subset \lgo$ implies  $\tilde \Hh \leq \tilde \Ll$. Moreover, since $\tilde \G$ is simply-connected, $\tilde \G \simeq \tilde \Ll \ltimes \tilde \Rr$ \cite[3.18.13]{Varad84}. 
We have $\G = \tilde \G / \ker \tau$, where $\tau : \tilde \G \to \Iso(\RR^n,g)$ is the morphism defining the action. Clearly $\ker \tau \leq \tilde \Hh$, thus $\G \simeq \Ll \ltimes \Rr$, where $\Rr \simeq \tilde \Rr$ is simply-connected and $\Ll \simeq \tilde \Ll / \ker \tau$.
\end{proof}

 
\begin{lemma}\label{lem_maxab}
Under the assumptions of Proposition \ref{prop_Rnhomognew}, there exists a Cartan decomposition 
$\lgo =\kg \oplus \pg$ such that $\hg \subset \kg$.
Moreover,
 $\kg = [\kg,\kg]\oplus \zg(\kg)$ is an abelian extension of $\hg$, that is, 
 $\kg_{ss}:=[\kg,\kg] \subset \hg$, and $(\lgo,\kg)$ is a symmetric  pair of non-compact type.
\end{lemma}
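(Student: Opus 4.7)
The plan is to establish in order: (i) the existence of a Cartan decomposition $\lgo=\kg\oplus\pg$ with $\hg\subset\kg$; (ii) that $\lgo$ has no compact simple ideals; and (iii) that $\kg_{ss}\subset\hg$. For (i), by Lemma~\ref{lem_Leviglobal} we have $\Hh\leq\Ll$, so $\hg$ is a compactly embedded subalgebra of the real semisimple Lie algebra $\lgo$. By a classical theorem of \'E.~Cartan, every such subalgebra sits inside some maximal compactly embedded subalgebra $\kg\subset\lgo$; any such $\kg$ is the $(+1)$-eigenspace of a Cartan involution $\sigma:\lgo\to\lgo$ and yields the desired decomposition $\lgo=\kg\oplus\pg$ with $\hg\subset\kg$.

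For (ii), suppose towards a contradiction that $\lgo$ contains a compact simple ideal $\lgo_c$. Since $\rg$ is solvable and $\lgo_c$ semisimple, $\lgo_c\cap\rg=0$; as $\lgo_c$ is an ideal of $\lgo$, this gives $[\lgo_c,\rg]\subset\lgo_c\cap\rg=0$. Hence $\lgo_c$ is an ideal of $\ggo$ commuting with $\rg$. The corresponding connected Lie subgroup $\Ll_c\leq\G$ is a compact normal subgroup of $\G$: compactness follows because the simply connected Lie group with Lie algebra $\lgo_c$ is compact, and $\Ll_c$ is a central quotient thereof. By Mostow's fixed-point theorem, the smooth action of the compact Lie group $\Ll_c$ on the contractible manifold $\RR^n$ admits a fixed point $p$. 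Since $\Ll_c$ is normal in $\G$, for any $q\in\Fix(\Ll_c)$ and $g\in\G$ one has $c\cdot(g\cdot q)=g\cdot(g^{-1}cg)\cdot q=g\cdot q$ for all $c\in\Ll_c$, so $\Fix(\Ll_c)$ is $\G$-invariant; by transitivity of $\G$ it must then equal $\RR^n$. This contradicts effectiveness of the $\G$-action, so $\lgo$ has no compact simple ideals and $(\lgo,\kg)$ is of non-compact type.

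For (iii), the key input is the minimality of $\dim\G$. By Lemma~\ref{lem_Leviglobal}, as smooth manifolds $\G/\Hh\cong(\Ll/\Hh)\times\Rr$; since $\Rr\cong\RR^r$ is contractible, $\Ll/\Hh$ is diffeomorphic to $\RR^{n-r}$ and in particular contractible. Suppose towards a contradiction that $\kg_{ss}\not\subset\hg$; then $\hg':=\hg+\kg_{ss}$ is a compact subalgebra of $\kg$ strictly containing $\hg$, with associated compact subgroup $\Hh'<\K$, where $\K\leq\Ll$ is the connected subgroup with Lie algebra $\kg$. The fibration $\K/\Hh\to\Ll/\Hh\to\Ll/\K$ has contractible total space and contractible base (the base being of non-compact type by part (ii), via the global Cartan decomposition), so $\K/\Hh$ is weakly contractible. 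The principal obstacle, and the technical crux of the argument, is to convert this topological information into the construction of a proper closed connected subgroup $\G^\ast\lneq\G$ still acting transitively on $\RR^n$, contradicting the minimality hypothesis. A natural candidate is the group generated by $\Hh$, the Iwasawa factor $\Aa\N$ of $\Ll$, and $\Rr$, whose dimension should fall strictly below $\dim\G$ by the amount $\dim\kg_{ss}-\dim(\hg\cap\kg_{ss})>0$; establishing that this subgroup acts transitively on $\RR^n$, by using the weak contractibility of $\K/\Hh$ and the simply transitive action of $\Aa\N$ on $\Ll/\K$, is where I expect the main difficulty to lie.
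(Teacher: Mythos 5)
Your step (i) matches the paper, but the heart of the lemma --- the inclusion $\kg_{ss}\subset\hg$ --- is not actually proved in your proposal: you only name a candidate smaller transitive subgroup generated by $\Hh$, $\Aa\N$ and $\Rr$, and you explicitly flag that you cannot verify its transitivity. That is a genuine gap, and moreover the minimality of $\dim\G$ is the wrong lever for this step. The paper's argument uses instead that $\G/\Hh\simeq\RR^n$ forces $\Hh$ to be a \emph{maximal compact} subgroup of $\G$ (\cite[Prop.~3.1]{alek}): the connected subgroup $\K_{ss}\leq\G$ with Lie algebra $\kg_{ss}$ is compact (its Lie algebra is compactly embedded, so Weyl's theorem applies), hence $x\K_{ss}x^{-1}\subset\Hh$ for some $x\in\G$ by conjugacy of maximal compact subgroups; and $\Ad(x)(\kg_{ss})=\kg_{ss}$ because $\Ad(x)(\kg_{ss})$ is a semisimple subalgebra of $\kg=\kg_{ss}\oplus\zg(\kg)$ and a perfect Lie algebra admits no nonzero homomorphism into the abelian factor $\zg(\kg)$. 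This gives $\kg_{ss}\subset\hg$ directly; minimality of $\dim\G$ is used in the paper only for the non-compact-type statement.

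A secondary problem is your justification of (ii): there is no fixed-point theorem for arbitrary smooth actions of compact Lie groups on contractible manifolds --- Hsiang--Hsiang constructed fixed-point-free smooth actions of every compact connected non-abelian Lie group on some Euclidean space. What is true here, again because $\Hh$ is maximal compact in $\G$, is that the compact subgroup $\Ll_c$ is conjugate into $\Hh$ and therefore fixes a point; with that repair, your observation that $\operatorname{Fix}(\Ll_c)$ is $\G$-invariant (by normality of $\Ll_c$) and hence all of $\RR^n$, contradicting effectiveness, is correct and is in fact a slightly cleaner route than the paper's, which instead conjugates $\lgo_c$ into $\hg$ and contradicts minimality of $\dim\G$ via the transitive subgroup with Lie algebra $\lgo_{nc}\ltimes\rg$. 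In short: the fixed-point input you need in (ii) and the conjugation input you are missing in (iii) are one and the same fact, namely that $\Hh$ is a maximal compact subgroup of $\G$.
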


\begin{proof}
Since $\Ll$ is semisimple, $\Ad:\Ll \to \Gl(\lgo)$
has discrete kernel and 
 $\Lie(\Ad(\Ll))=\lgo$. 
Moreover, the compact subgroup $\Ad(\Hh)$
is contained in a maximal compact subgroup $\K$ of
the centerless semisimple Lie group $\Ad(\Ll)$.  
By \cite[Ch.6, Thm.1.1]{Helgason01}, $\kg := \Lie(\K)$
is the fixed point set of a Cartan involution. The corresponding Cartan decomposition  $\lgo = \kg \oplus \pg$ satisfies $\hg \subset \kg$.

We now show that $[\kg,\kg] \subset \hg$. Let $\K, \K_{ss} := [\K,\K]$ be the connected Lie subgroups of $\G$ with Lie algebras $\kg, \kg_{ss}$, respectively. It is well-known that $\G/\Hh = \RR^n$ if and only if $\Hh$ is a maximal compact subgroup of $\G$, see for instance \cite[Prop.3.1]{alek}. The group $\K_{ss}$ is semisimple and has a compactly embedded Lie algebra $\kg_{ss} \subset \kg$, therefore it is compact. By uniqueness of maximal compact subgroups, there exists $x\in \G$ such that $x \K_{ss} x^{-1} \subset \Hh$. At Lie algebra level this implies that $\Ad(x)(\kg_{ss}) \subset \hg \subset \kg$. But the Lie algebra $\kg = \kg_{ss} \oplus \zg(\kg)$ has a unique maximal semisimple ideal, therefore we must have $\kg_{ss} = \Ad(x) (\kg_{ss})$ and  $\kg_{ss} \subset \hg$ as claimed.

To prove the last claim let $\lgo_{c}$ (resp.~ $\lgo_{nc}$) be the sum of all simple ideales of $\lgo$ of compact (resp.~non compact) type, and let $\Ll_c, \Ll_{nc} \leq \Ll$ be the corresponding connected Lie subgroups. The ideal $\lgo_c$ is the unique maximal semisimple ideal of compact type in $\lgo$.  Arguing as above, since $\Ll_{c}$ is compact, there exists $x\in \G$ such that $x \Ll_c x^{-1 } \leq \Hh$, thus $\Ad(x) (\lgo_c) \subset \hg \subset \lgo$. Then $\lgo_c = \Ad(x) (\lgo_c)$ by uniqueness, and $\lgo_c \subset \hg$. It then follows easily that the connected subgroup of $\G$ with Lie algebra $\lgo_{nc} \ltimes \rg$ is closed in $\G$ by Lemmas \ref{lem_Leviglobal} and \ref{lem_Sfactor}, and acts transitively on $\G/\Hh$. This contradicts minimality of $\dim \G$, unless $\lgo_c = 0$, and the conclusion follows. 
\end{proof}

In order to prove the last part of Proposition \ref{prop_Rnhomognew} let us introduce the following class of subgroups of $\G$, which will be very important in the sequel. 

\begin{definition}\label{def_Iwasawa}
Let $\G/\Hh$ be a homogeneous space. Fix a Levi decomposition $\ggo = \lgo \ltimes \rg$ with $\hg \subset \lgo= \lgo_1 \oplus \cdots \oplus \lgo_r$, written as a sum of simple ideals, and consider an Iwasawa decomposition $\lgo_1 = \kg_1 \oplus \ag_1 \oplus \ngo_1$. 
 The connected Lie subgroup 
$\Go \leq \G$ with Lie algebra $\ggoo := ((\ag_1 \oplus \ngo_1) \oplus \lgo_2 \oplus \cdots \oplus \lgo_r) \ltimes \rg$ is called an \emph{Iwasawa subgroup associated to $(\lgo_1, \kg_1)$}. 
\end{definition}


\begin{lemma}\label{lem_Iwaclosed}
For a Lie group $\G \simeq \Ll \ltimes \Rr$ with a gobal Levi decomposition, Iwasawa subgroups are closed.
\end{lemma}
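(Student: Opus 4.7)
My approach is to reduce the lemma to showing that a certain connected subgroup $\Ll_0 \leq \Ll$ of the semisimple Levi factor is closed, and then to establish this closedness using the Iwasawa decomposition of $\Ll$.

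First I reduce to the semisimple factor. Since the Levi decomposition $\G \simeq \Ll \ltimes \Rr$ is global, the radical $\Rr$ is closed and normal in $\G$, and the projection $\pi \colon \G \to \Ll$ is a morphism of Lie groups with kernel $\Rr$. Because $\rg \subset \ggoo$, the Iwasawa subgroup $\Go$ contains $\Rr$, so $\Go = \pi^{-1}(\Ll_0)$, where $\Ll_0 \leq \Ll$ denotes the connected Lie subgroup with Lie algebra $\lgo_0 := (\ag_1 \oplus \ngo_1) \oplus \lgo_2 \oplus \cdots \oplus \lgo_r$. Closedness of $\Go$ in $\G$ therefore reduces to closedness of $\Ll_0$ in $\Ll$.

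Next I set up a compatible Iwasawa decomposition for the full Levi factor. Extending $\lgo_1 = \kg_1 \oplus \ag_1 \oplus \ngo_1$ by choosing Iwasawa decompositions $\lgo_j = \kg_j \oplus \ag_j \oplus \ngo_j$ for each $j \geq 2$ yields $\lgo = \kg \oplus \ag \oplus \ngo$ with $\kg = \bigoplus_i \kg_i$, $\ag = \bigoplus_i \ag_i$ and $\ngo = \bigoplus_i \ngo_i$. Globally this produces the standard Iwasawa diffeomorphism $\K \times \Aa\N \to \Ll$, $(k,b) \mapsto kb$, see for instance \cite[Ch.~VI]{Helgason01}. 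Let $\K_0 \leq \K$ denote the connected subgroup with Lie algebra $\kg_0 := \kg_2 \oplus \cdots \oplus \kg_r$; since $\kg_0$ is an ideal of $\kg$, $\K_0$ is a connected normal Lie subgroup of the connected Lie group $\K$, hence closed in $\K$.

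The core of the argument is the identification $\Ll_0 = \K_0 \cdot \Aa\N$: under the Iwasawa diffeomorphism $\Ll_0$ then corresponds to the closed subset $\K_0 \times \Aa\N \subset \K \times \Aa\N$, giving closedness of $\Ll_0$ in $\Ll$. The inclusion $\K_0 \cdot \Aa\N \subset \Ll_0$ is immediate from $\kg_0, \ag, \ngo \subset \lgo_0$. For the reverse, given $g \in \Ll_0$, I would write $g = g_1 g'$ with $g_1 \in \Aa_1\N_1$ and $g' \in \Ll' := \Ll_2 \cdots \Ll_r$; apply the Iwasawa decomposition of $\Ll'$ to obtain $g' = k' b'$ with $k' \in \K_0$ and $b' \in \Aa_2\N_2 \cdots \Aa_r\N_r$; and use $[\lgo_1, \lgo_j] = 0$ for $j \geq 2$ to commute $k'$ past $g_1$, producing $g = k'(g_1 b') \in \K_0 \cdot \Aa\N$. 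I expect this identification to be the main technical point: a priori the subgroup of $\Ll$ generated by $\K_0 \cup \Aa\N$ consists of arbitrary alternating products, and the clean factorisation as a single product relies crucially on the elementwise commutativity between the first simple factor and the remaining ones. All other inputs---closedness of $\Rr$ in a semidirect product, closedness of connected normal subgroups of a Lie group, and the Iwasawa diffeomorphism for semisimple Lie groups---are classical.
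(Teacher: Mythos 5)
Your overall strategy is sound and genuinely different from the paper's: you push everything through the global Iwasawa diffeomorphism $\K\times\Aa\N\to\Ll$ of the \emph{full} Levi factor and identify the relevant subgroup with the product set $\K_0\times\Aa\N$, whereas the paper only invokes the Iwasawa decomposition of the single factor $\Ll_1$ (to get $\Aa_1\N_1$ closed in $\Ll_1$) and then combines it with the remaining simple factors via $\bar\Ll=\Aa_1\N_1\Ll_2\cdots\Ll_r$, using Lemma \ref{lem_Sfactor}. Your reduction to the semisimple part, the factorisation $\Ll_0=\K_0\cdot\Aa\N$ via elementwise commutativity of $\lgo_1$ with $\lgo_2\oplus\cdots\oplus\lgo_r$, and the final step reading off closedness from the product picture are all correct.

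There is, however, one genuine gap: the justification that $\K_0$ is closed in $\K$. A connected normal Lie subgroup of a connected Lie group need \emph{not} be closed --- the irrational one-parameter subgroup of $\RR^2/\ZZ^2$ is connected, normal (the ambient group is abelian) and corresponds to an ideal that is even a direct summand, yet it is dense. So ``normal, hence closed'' is not a valid inference, and this is exactly the point where some structural input about semisimple groups must enter; note that $\kg$ may have a centre of dimension $\geq 2$ here (each Hermitian factor contributes $\zg(\kg_i)\neq 0$), so irrational winding is not ruled out for free. The conclusion is nevertheless true, and can be obtained from the same ingredient the paper uses: by Lemma \ref{lem_Sfactor} the semisimple subgroup $\Ll_{\geq 2}=\Ll_2\cdots\Ll_r$ is closed in $\Ll$, and $\K_0$ is the compactly embedded factor of the global Iwasawa decomposition $\Ll_{\geq 2}=\K_0\Aa_{\geq 2}\N_{\geq 2}$, hence closed in $\Ll_{\geq 2}$ and therefore in $\Ll$ and in $\K$. (Alternatively, one can observe that the discrete central subgroup by which the simply-connected cover $\tilde\Ll_1\times\cdots\times\tilde\Ll_r$ is divided sits inside $Z(\tilde\Ll_1)\times\cdots\times Z(\tilde\Ll_r)$ and hence projects discretely to each factor, which excludes any irrational identification between the centres of the $\kg_i$.) With this one step repaired, your proof is complete.
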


\begin{proof}
By the assumption, it is enough to show that the connected Lie subgroup $\bar \Ll$ of $\Ll$ with Lie algebra $\bar \lgo := (\ag_1 \oplus \ngo_1) \oplus \lgo_2 \oplus \cdots \oplus \lgo_r$ is closed in $\Ll$. Let $\Ll_i \leq \Ll$ denote the connected Lie subgroup with Lie algebra $\lgo_i$. Since the Iwasawa decomposition holds at the group level, we have $\Ll_1 = \K_1 \Aa_1 \N_1$ diffeomorphic to $\K_1 \times \Aa_1 \times \N_1$. In particular, $\Aa_1 \N_1$ is closed in $\Ll_1$. Recall that for a connected semisimple Lie group, $\Ll_i$ is closed in $\Ll$ for all $i=1,\ldots, r$, by Lemma \ref{lem_Sfactor}. Then the lemma follows from the fact that $\bar \Ll = \Aa_1 \N_1 \Ll_2 \cdots \Ll_r$.
\end{proof}

\begin{lemma}\label{lem_Rnhomog}
Under the assumptions of Proposition \ref{prop_Rnhomognew}, if $\lgo = \kg \oplus \pg$ is a Cartan decomposition with $\hg \subset \kg$, then $\hg = \kg_{ss}$.
\end{lemma}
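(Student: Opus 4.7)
The plan is to argue by contradiction, exploiting the minimality of $\dim \G$. By Lemma \ref{lem_maxab} we already have $\kg_{ss} \subset \hg$, and since $\hg \subset \kg = \kg_{ss} \oplus \zg(\kg)$, it suffices to establish $\hg \cap \zg(\kg) = 0$. The overall strategy is: if this fails, then a carefully chosen Iwasawa subgroup (in the sense of Definition \ref{def_Iwasawa}) still acts transitively on $\G/\Hh$ with compact isotropy but with strictly smaller dimension, contradicting minimality.

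More precisely, I would decompose $\lgo = \lgo_1 \oplus \cdots \oplus \lgo_r$ into simple ideals, with Cartan decompositions $\lgo_i = \kg_i \oplus \pg_i$ compatible with $\kg = \kg_1 \oplus \cdots \oplus \kg_r$. Then $\zg(\kg) = \bigoplus_i \zg(\kg_i)$ and each $\zg(\kg_i)$ is at most one-dimensional (being $\RR$ exactly when $\lgo_i$ is Hermitian of noncompact type). Suppose for contradiction that some $0 \neq Z = Z_1 + \cdots + Z_r \in \hg \cap \zg(\kg)$ has $Z_{i_0} \neq 0$, so $\zg(\kg_{i_0}) = \RR Z_{i_0}$. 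Consider the Iwasawa subgroup $\Go \leq \G$ associated to $(\lgo_{i_0}, \kg_{i_0})$, whose Lie algebra $\ggoo$ is obtained from $\ggo$ by replacing the summand $\kg_{i_0} \subset \lgo_{i_0}$ by the positive Iwasawa factor $\ag_{i_0} \oplus \ngo_{i_0}$. By Lemmas \ref{lem_Leviglobal} and \ref{lem_Iwaclosed} this $\Go$ is closed in $\G$, and $\dim \Go = \dim \G - \dim \kg_{i_0} < \dim \G$ since $\lgo_{i_0}$ is noncompact simple.

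The algebraic heart of the argument is to show that $\ggoo + \hg = \ggo$, equivalently that $\kg_{i_0} \subset \ggoo + \hg$. On the one hand $\kg_{i_0}^{ss} \subset \kg_{ss} \subset \hg$; on the other hand the element $Z_{i_0} = Z - \sum_{i \neq i_0} Z_i$ lies in $\hg + \ggoo$ because $Z_i \in \lgo_i \subset \ggoo$ for each $i \neq i_0$, and this forces $\zg(\kg_{i_0}) = \RR Z_{i_0} \subset \hg + \ggoo$. From $\ggoo + \hg = \ggo$ and connectedness of $\G$ one obtains $\Go \cdot \Hh = \G$, so $\Go$ acts transitively on $\G/\Hh \simeq \RR^n$ with compact isotropy $\Go \cap \Hh$; passing to the effective quotient then yields a presentation of $\RR^n$ by a connected Lie group with compact isotropy and strictly smaller dimension, contradicting minimality of $\dim \G$. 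The step requiring attention is the identity $\ggoo + \hg = \ggo$, which is precisely what the hypothesized central element of $\hg$ was needed for; the other ingredients (closedness of $\Go$, absence of compact simple factors forcing $\dim \kg_{i_0} \geq 1$, compactness of $\Go \cap \Hh$) are immediate from the lemmas already at hand.
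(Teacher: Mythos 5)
Your proposal is correct and follows essentially the same route as the paper: assume a nonzero central element of $\hg$ projects nontrivially onto some $\zg(\kg_{i_0})$, use one-dimensionality of $\zg(\kg_{i_0})$ to conclude $\kg_{i_0}\subset \hg+\bar\ggo$ and hence $\ggo=\hg+\bar\ggo$, so the corresponding Iwasawa subgroup (closed by Lemma \ref{lem_Iwaclosed}) acts transitively with compact isotropy and smaller dimension, contradicting minimality. The paper simply takes $i_0=1$ without loss of generality; otherwise the arguments coincide.
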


\begin{proof}
Thanks to Lemma \ref{lem_maxab}, we only need to prove that $\zg(\hg) = 0$. Assume on the contrary that  $\zg(\hg)$ projects nontrivially onto 
$\zg(\kg_1)$. This means that there exists $Z \in \zg(\hg)$ such that 
\begin{equation}\label{eqn_Z1}
	Z = Z_1 + Z_2, \qquad Z_1 \in \zg(\kg_1) \setminus \{0 \}, \, \, Z_2 \in \zg(\kg_2 \oplus \cdots \oplus \kg_r)
\end{equation}
in the notation of \eqref{eqn_RbundleHSS}. Let $\Go \leq \G$ be an Iwasawa subgroup associated to $(\lgo_1, \kg_1)$. We claim that 
$ \ggo=\hg + \bar \ggo $, which would follow from $\kg_1 = \kg_1^{ss} \oplus \zg(\kg_1) \subset \hg + \bar \ggo$. By Lemma \ref{lem_maxab} it suffices to show that $\zg(\kg_1) \subset \hg+\bar \ggo$,
which follows from \eqref{eqn_Z1}, since
$\dim(\zg(\kg_1))=1$.

Now $\bar \G$ acts transitively on $\G/\Hh$ by the above claim, effectively because $\G$ acts effectively, and with compact isotropy, since this is equivalent to $\bar \G$ being closed in the isometry group of $g$, which holds thanks to Lemma \ref{lem_Iwaclosed} and the fact that $\G$ is closed in $\Iso(\RR^n,g)$. But $\dim \bar \G < \dim \G$, and this contradicts the minimality of $\dim \G$.   
\end{proof}

\section{Periodicity}\label{sec_period}

In this section we show that after passing to a  quotient we may
assume that Iwasawa subgroups 
act on $\RR^n=\G/\Hh$ with cohomogeneity one and 
orbit space $S^1$. The most basic example for periodicity is given by
the universal cover  
$\tilde \G\simeq \RR^3$ of $\G=\Sl(2,\RR)$. 
Since $\Sl(2,\RR) = \Gamma \backslash \tilde \G$ for some discrete subgroup $\Gamma \leq Z(\tilde \G)$, 
for any left-invariant metric $\tilde g$ on $\tilde \G$ 
there is a left-invariant metric $g$ on $\Sl(2,\RR)$ locally isometric 
$\tilde g$. Now from \emph{any} Iwasawa decomposition $\Sl(2,\RR) = \K\Aa \N$ one gets a solvable subgroup $\Go=\Aa \N \leq \Sl(2,\RR)$ acting on $(M^3=\Sl(2,\RR), g)$ with cohomogeneity one and orbit space $\K \simeq \SO(2) \simeq S^1$.

\begin{proposition}[Perodicity]\label{prop_periodicity}
Let $(\RR^n=\G/\Hh, g)$ be an effective presentation, $\Hh$ compact and $\dim \G$ minimal, and let $\ggo = \lgo \ltimes \rg$ be a Levi decomposition with $\hg \subset \lgo$. Then, there exist a discrete, central subgroup $\Gamma \leq \G$ for which $(M^n=\G_\Gamma/\Hh_\Gamma, g_\Gamma) := (\G/\Hh, g)/\Gamma$, $\G_\Gamma = \G/\Gamma$, $\Hh_\Gamma = \Hh/(\Hh \cap \Gamma)$, is a homogeneous space locally isometric to $(\RR^n,g)$ such that \emph{any} Iwasawa subgroup $ \Go \leq \G_\Gamma$ acts on $M^n$ with cohomogeneity one, embedded orbits and orbit space $S^1$.
\end{proposition}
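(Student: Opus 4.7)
The plan is to apply Proposition~\ref{prop_Rnhomognew} together with Lemmas~\ref{lem_maxab} and~\ref{lem_Rnhomog} to reduce to the structured setting $\G \simeq \Ll \ltimes \Rr$, $\Hh = \K_{ss}$ semisimple, with $\Ll/\Hh$ an $\RR^r$-bundle over a Hermitian symmetric space of non-compact type $\Ll/\K$. Writing $\lgo = \lgo_1 \oplus \cdots \oplus \lgo_r$ as a sum of simple non-compact Hermitian ideals, we have $\zg(\kg) = \RR Z_1 \oplus \cdots \oplus \RR Z_r$ with each $\zg(\kg_i)$ one-dimensional; fix generators $Z_i$. For the Iwasawa subgroup $\Go$ associated to $(\lgo_1,\kg_1)$, a direct dimension count gives $\ggoo \cap \hg = \kg_2^{ss} \oplus \cdots \oplus \kg_r^{ss}$ and $\ggoo + \hg + \RR Z_1 = \ggo$, so the $\Go$-orbit through $o = e\Hh$ has codimension one in $\G/\Hh$, with transversal direction along~$Z_1$; embeddedness follows from the closedness statement in Lemma~\ref{lem_Iwaclosed} and properness of the $\G$-action.

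To achieve periodicity, I produce for each Hermitian simple factor a distinguished element $\gamma_i \in Z(\G)$ whose action on $\G/\Hh$ shifts the transversal in the $Z_i$-direction by a non-zero amount. The key input is the Hermitian condition: the maximal compact subgroup of the adjoint group $\Ad(\Ll_i)$ has one-dimensional center equal to the image of $\exp(\RR Z_i)$, which is a circle, so there exists a smallest $T_i > 0$ with $\gamma_i^{\Ll} := \exp(T_i Z_i) \in Z(\Ll_i) \subset Z(\Ll)$. The requirement $\K/\K_{ss} \simeq \RR^r$, equivalent to $\Ll/\Hh$ being diffeomorphic to Euclidean space, forces $\gamma_i^{\Ll}$ to be non-trivial in $\Ll$, hence of infinite order. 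The main technical obstacle is then upgrading $\gamma_i^{\Ll}$, possibly after replacing it by a suitable power, to an element of $Z(\G) = Z(\Ll \ltimes \Rr)$: concretely, one needs $\Ad(\gamma_i^{\Ll})|_\rg = \Id_\rg$. Since $Z_i$ is compactly embedded in $\lgo$, $\ad(Z_i)|_\rg$ is skew-symmetric with respect to any $\Ad(\Hh)$-invariant metric on $\rg$, so $\Ad(\exp(\RR Z_i))|_\rg$ is a relatively compact one-parameter subgroup of $\Gl(\rg)$; exploiting the rationality of the eigenvalues of $\ad(Z_i)|_\rg$---dictated by the algebraic rigidity imposed by $\G/\Hh$ being Euclidean---produces an integer $N_i$ with $\gamma_i := (\gamma_i^{\Ll})^{N_i} \in Z(\G)$.

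With $\Gamma := \langle \gamma_1, \ldots, \gamma_r \rangle \leq Z(\G)$, a discrete free abelian subgroup of rank~$r$, we have $\Gamma \cap \Hh = \{e\}$ by effectiveness of the $\G$-action, so the left-translation action of $\Gamma$ on $\G/\Hh$ is free; properness follows because $\Gamma$ sits as a lattice in the closed subgroup $\exp(\zg(\kg)) \simeq \RR^r \subset \G$, whose orbit through~$o$ embeds in $\G/\Hh$ as a closed Euclidean subspace. Hence $M^n := (\G/\Hh)/\Gamma$ is a smooth Riemannian manifold locally isometric to $(\RR^n, g)$, and $\G_\Gamma = \G/\Gamma$ acts transitively and effectively on $M^n$ with isotropy $\Hh_\Gamma = \Hh/(\Hh \cap \Gamma) = \Hh$. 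For any Iwasawa subgroup $\Go \leq \G_\Gamma$ associated to $(\lgo_i, \kg_i)$, cohomogeneity one and embedded orbits are inherited from the $\G/\Hh$ picture; moreover, $\gamma_j$ for $j \neq i$ lies in $\Ll_j$ and acts trivially on the $Z_i$-transversal, while $\gamma_i$ translates this transversal by $N_i T_i \neq 0$, so the $\Go$-orbit space on $M^n$ becomes $\RR/N_i T_i \ZZ \simeq S^1$, as required.
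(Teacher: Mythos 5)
Your construction is genuinely different from the paper's, and most of it can be made to work, but the step you yourself single out as ``the main technical obstacle'' is not actually resolved. You need, for each Hermitian factor, an integer $N_i$ with $\Ad\!\big(\exp(N_iT_iZ_i)\big)|_\rg=\Id_\rg$, and you justify this by ``rationality of the eigenvalues of $\ad(Z_i)|_\rg$ --- dictated by the algebraic rigidity imposed by $\G/\Hh$ being Euclidean.'' The Euclidean hypothesis plays no role here and cannot supply this rationality; as written this is a non-argument, and without it the central elements $\gamma_i$ need not exist and the whole quotient collapses. The statement is nevertheless true, but for a purely representation-theoretic reason: $\rg^\CC$ is a completely reducible $\lgo_i^\CC$-module (Weyl), $Z_i$ lies in a Cartan subalgebra so $\theta(Z_i)=\ad(Z_i)|_\rg$ acts semisimply with eigenvalues $\mu(Z_i)$ for weights $\mu$ in the weight lattice $P$, and since $\exp(T_i\ad Z_i)=\Id_{\lgo_i}$ forces $\alpha(T_iZ_i)\in 2\pi\ii\ZZ$ for all roots $\alpha$, one may take $N_i=[P:Q]$ where $Q$ is the root lattice. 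If you supply this, the rest of your argument (freeness and properness of the $\Gamma$-action, $\gamma_j\in\Ll_j\subset\Go$ acting trivially on the transversal for $j\neq i$, and $\gamma_i$ acting by a nonzero translation on the orbit space $\RR$, which one checks is simply transitive for $\exp(\RR Z_i)$) does go through. Minor points: discreteness of your $\Gamma$ is most cleanly seen from $\Gamma\leq Z(\Ll)$ with $\Ll$ closed in $\G$, rather than via a lattice in $\exp(\zg(\kg))$; and the rank-$r$ claim, while true, is irrelevant to the conclusion.

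For comparison, the paper avoids constructing any explicit central elements: it takes $\Gamma:=Z(\G)\cap\Ll$ all at once, observes that $\Ad:\Ll_\Gamma\to\End(\ggo)$ is then faithful, so $\Ll_\Gamma$ is linearly real reductive and has finite center, whence $\K_1$ is genuinely compact and $Z(\K_1)\simeq S^1$. This circle, acting by \emph{right} multiplication and hence commuting with the left $\Go_\Gamma$-action, acts transitively on the orbit space, which is therefore $S^1$. That route buys uniformity (no case-by-case eigenvalue analysis on $\rg$ and no choice of generators), while yours, once the weight-lattice step is inserted, has the advantage of exhibiting the deck group of the covering $\RR^n\to M^n$ explicitly as translations along the $\zg(\kg)$-directions.
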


\begin{proof}
By Proposition \ref{prop_Rnhomognew} we have a global Levi decomposition $\G \simeq \Ll \ltimes \Rr$. Let $\Gamma := Z(\G) \cap \Ll$, a central subgroup of $\G$ and $\Ll$, hence discrete by semisimplicity. It is not hard to see that the group $\G_\Gamma := \G/\Gamma$ acts transitively on the double coset $\Gamma \backslash \G / \Hh$, with isotropy $\Hh_\Gamma := \Hh / (\Hh \cap \Gamma)$. Since $\Gamma \leq \Ll$ we clearly also have a global Levi decomposition $\G_\Gamma \simeq \Ll_\Gamma \ltimes \Rr$, $\Ll_\Gamma := \Ll/\Gamma$. Since $\G_\Gamma$ and $\Hh_\Gamma$ are connected
with $\Lie(\G_\Gamma)=\ggo$ and $\Lie(\Hh_\Gamma)=\hg$,
the space of homogeneous metrics on $\G/\Hh$ and
$M^n=\G_\Gamma/\Hh_\Gamma$ agrees. Moreover,
Iwasawa subgroups $\Go_\Gamma \leq \G_\Gamma$ will act with cohomogeneity one on $M^n$, and
by Lemma \ref{lem_Iwaclosed} they are closed in $\G_\Gamma$.

It remains to be shown that the orbit space is $S^1$. To that end, if $\Go_\Gamma \leq \G_\Gamma$ is an Iwasawa subgroup associated to $(\lgo_1, \kg_1)$, let $\K_1 \leq \Ll_1 \leq \Ll_\Gamma$ be the corresponding connected Lie subgroups with Lie algebras $\kg_1 \subset \lgo_1$, respectively. By Proposition \ref{prop_Rnhomognew} (see also \eqref{eqn_RbundleHSS}) we have $\kg_1 = \kg_1^{ss} \oplus \zg(\kg_1)$ with $\dim \zg(\kg_1) = 1$. Consider now the action of the connected Lie group $Z(\K_1) \subset N_{\G_\Gamma}(\Hh_\Gamma)$ 
 on $M^n$ by right-multiplication. Since this action commutes with the action of $\Go_\Gamma$ by left-multiplication, the direct product 
$\Go_\Gamma \times Z(\K_1)$ acts 
transivtiely on $M^n$. 
Thus, $Z(\K_1)$ acts transitively on the orbit space $M^n/\Go_\Gamma$. 
Finally notice, that the group $\Gamma$ is the kernel of the adjoint  representation $\Ad(\G):\G\to \Aut(\G)$ restricted to $\Ll$. 
Since  $\Ad : \Ll_\Gamma \to 
\End(\ggo)$ is a faithful  representation, 
by \cite[Prop. 16.1.7]{HlgNeb}
the group $\Ll_\Gamma$ 
is linearly real reductive, implying finite center. By the same
reason the center of $\Ll_1 \leq \Ll_\Gamma$ is also finite, thus
$\K_1$ is compact by \cite[Ch.6, Thm.1.1]{Helgason01}
 and therefore $Z(\K_1) \simeq S^1$.
\end{proof}

\section{Integral minimality}\label{sec_integralm}

Let $(\RR^n = \G/\Hh, g)$ satisfy the assumptions of Proposition \ref{prop_periodicity} and consider its quotient space  $(M^n=\G_\Gamma/\Hh_\Gamma, g_\Gamma)$ given by that result. The proof of Proposition \ref{prop_periodicity} yields  also a global Levi decomposition 
$\G_\Gamma = \Ll \ltimes \Rr$ with $\Hh_\Gamma \leq \Ll$. From now on we also assume that at $o:= e\Hh$ we have 
\begin{equation}
	\Ll \cdot o \perp_g  \Rr \cdot o\,. \label{eqn_orth}
\end{equation}
To ease notation, in what follows we will drop the subscripts $\Gamma$ and write simply $\G/\Hh$. 

\begin{definition}\label{def_Levi}
 A homogeneous space $(\G/\Hh, g)$ as above satisfying \eqref{eqn_orth} and the conclusion of Proposition \ref{prop_periodicity} is said to have a \emph{Levi presentation}. 
\end{definition}


The main result in this section is the following

\begin{theorem}[Integral minimality]\label{thm_minimal}
Let $(\G/\Hh,g)$  have a Levi presentation and choose a simple factor $\lgo_1\subset \lgo$ and a maximal compactly embedded subalgebra $\kg_1 \subset \lgo_1$. Then, there exists an Iwasawa subgroup $\bar \G \leq \G$ associated to $(\lgo_1,\kg_1)$ which acts on $\G/\Hh$ with cohomogeneity one, embedded orbits and orbit space $S^1$. Moreover,
\begin{equation}\label{eqn_intmin}
	\int_{S^1} \tr L_t \, \,  dt = 0\,,
\end{equation}
where  $\tr L_t$ denotes the mean curvature of the $\bar \G$-orbits.
If $\lgo_1 \simeq \slg(2,\RR)$, then the orbits of \emph{any} Iwasawa subgroup associated to $(\lgo_1,\kg_1)$ are in addition minimal hypersurfaces.
\end{theorem}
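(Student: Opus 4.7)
The existence of $\bar\G$ with cohomogeneity one, embedded orbits and orbit space $S^1$ follows directly from Proposition \ref{prop_periodicity} applied to the Levi presentation: any Iwasawa subgroup associated to $(\lgo_1,\kg_1)$ will serve. For \eqref{eqn_intmin}, by Remark \ref{rem_intmin} and writing $g_t=2\bar g(G_t\cdot,\cdot)$, the claim reduces to periodicity $V(T)=V(0)$ of $V(t)=\det G_t$ over the length $T$ of $M^n/\bar\G\simeq S^1$. Following the proof of Proposition \ref{prop_periodicity}, there exists $f=\exp(sZ_1)\in Z(\K_1)$ for some $Z_1\in\zg(\kg_1)$ with $f\cdot\gamma(0)=\gamma(T)$. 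Writing $g_t=q_t\cdot\bar g$ with $q_0=\Id_\mgo$, the same argument used in the proof of Lemma \ref{lem_lperiodic} (via Lemma \ref{lem_Adf}) yields $q_T^{-1}\cdot\Ad_\mgo f\in\Omo$, and hence $V(T)/V(0)=(\det\Ad_\mgo f)^{-2}$.

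It thus suffices to show $\det\Ad_\mgo f=1$. First, $\ad Z_1$ annihilates $\hg=\oplus_j\kg_j^{ss}$, since $[\zg(\kg_1),\kg_1^{ss}]=0$ by definition of the centre and $[\lgo_1,\lgo_j]=0$ for $j\neq 1$. In a basis adapted to $\ggo=\hg\oplus\mgo$ the matrix of $\ad Z_1$ is then block-triangular with vanishing upper-left block, giving $\det\Ad_\mgo f=e^{s\tr\ad Z_1}$. Now $\tr\ad Z_1=0$ on the semisimple factor $\lgo$ (inner derivations of semisimple Lie algebras are traceless), and on the radical $\rg$ because $\Ad(Z(\K_1))|_\rg$ is a compact connected subgroup of $\Aut(\rg)\subset\Gl(\rg)$, hence is contained in $\SO(\rg,\la\cdot,\cdot\ra)$ for some invariant inner product, so its Lie algebra is traceless. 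This completes the proof of \eqref{eqn_intmin}.

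For the final claim when $\lgo_1\simeq\slg(2,\RR)$, note that here $\K_1=Z(\K_1)\simeq S^1$. I would leverage the Milnor-type description of left-invariant metrics on $\Sl(2,\RR)$ recalled in Remark \ref{rem_euclhom}, together with the orthogonality \eqref{eqn_orth} built into the Levi presentation, to show that this $\K_1$ acts on $(M^n,g)$ by isometries via right-translation. Since this action commutes with $\bar\G$, it preserves the foliation and descends to a transitive isometric $S^1$-action on $M^n/\bar\G$, forcing $\tr L_t$ to be constant on the orbit space; together with \eqref{eqn_intmin} this constant must equal zero. Since all Iwasawa subgroups associated to $(\lgo_1,\kg_1)$ are $\K_1$-conjugate to $\bar\G$ and the conjugating element is an isometry, minimality is inherited by every such subgroup. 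The main technical obstacle here is the verification of the isometric right $\K_1$-action in the $\slg(2,\RR)$ case; by contrast the integral-minimality step reduces to the direct trace computation on $\ggo$ outlined above.
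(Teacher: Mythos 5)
The existence and periodicity claims, and the reduction of \eqref{eqn_intmin} to $V(T)=V(0)$ via Remark \ref{rem_intmin}, are fine. The gap is in the key step. The identity $V(T)/V(0)=(\det\Ad_{\mgo}f)^{-2}$ is only available for an element $f\in\bar\G$ with $f\cdot\gamma(0)=\gamma(T)$, as in Lemma \ref{lem_lperiodic}: the reductive complement $\mgo$ on which $q_t$ and Lemma \ref{lem_Adf} operate is the one attached to the cohomogeneity-one group $\bar\G$, and the conclusion $q_T^{-1}(\Ad_{\mgo}f)\in\Omo$ needs $\Ad f$ to preserve $\hgo\oplus\mgo$. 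Your $f=\exp(sZ_1)\in Z(\K_1)$ does not qualify: $\zg(\kg_1)\not\subset\bar\ggo$, conjugation by $Z(\K_1)$ moves the Iwasawa subalgebra $\ag_1\oplus\ngo_1$ (so $\Ad f$ does not preserve $\bar\ggo$, let alone $\mgo$), right translation by $Z(\K_1)$ is not an isometry of a generic invariant metric, and the normal geodesic is not the right $Z(\K_1)$-orbit through $o$, so $f\cdot\gamma(0)=\gamma(T)$ fails in general. Your computation $\tr\ad Z_1=0$ is correct but computes the wrong determinant. For the correct $f$, which lies in the \emph{non-unimodular} group $\bar\G$, the statement $\det\Ad_{\mgo}f=1$ says exactly that $f$ lies in the codimension-one unimodular kernel of $\bar\G$; this is the genuine geometric content of integral minimality and cannot follow from a formal trace identity. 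Indeed, the paper first reduces to $\G$ simple via the Riemannian submersion \eqref{eqn_Rsubmersion} (Lemmas \ref{lem_Rmin}--\ref{lem_redss}), then for $\lgo_1\not\simeq\sog(2,m)$ proves the stronger fact that the orbits are pointwise minimal using \eqref{eqn_orth} and the structure of the isotropy representation, while for the hyperquadrics $\sog(2,m)$ the orbits are \emph{not} minimal and one must instead construct an isometric involution reversing the normal geodesic and permuting the $\bar\Ll$-orbits, so that $\tr L_t$ is odd on $S^1$. A case-independent algebraic argument like yours would contradict this dichotomy.

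The final part for $\lgo_1\simeq\slg(2,\RR)$ also breaks down at the same point: $\K_1$ acting by right translation is an isometry only if $g$ is right-$\K_1$-invariant, which a generic left-invariant metric on $\Sl(2,\RR)$ is not (the Milnor basis diagonalizes the metric but does not make it bi-invariant under $\SO(2)$). The paper instead notes that every $\bar\Ll$-orbit meets a point $p=k\Hh$ with $k\in Z(\K)$, where the isotropy is again $\Hh$, and computes $\tr L_p=-\tr\ad_{\lgo}X^{\perp}=0$ directly from the unimodularity of $\slg(2,\RR)$. (Your last observation, that minimality of all $\bar\G$-orbits propagates to any $\K_1$-conjugate Iwasawa subgroup because $k\bar\G k^{-1}\cdot p=L_k(\bar\G\cdot k^{-1}p)$, is correct, but it presupposes the pointwise minimality you have not established.)
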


In the case of hyperquadrics ($\lgo_1 \simeq \sog(m,2)$, see Remark \ref{rem_euclhom}) it can be shown that the $\bar \G$-orbits are not minimal hypersurfaces for a generic invariant metric $g$.

The idea is to reduce the proof to the case $\G$ simple. To that end,  fix a simple factor $\lgo_1 \subset \lgo$ and set $\lgo_{\geq 2} := \lgo_2 \oplus \cdots \oplus \lgo_r$, so that $\lgo = \lgo_1 \oplus \lgo_{\geq 2}$ and  $\ggo_2 := \lgo_{\geq 2} \ltimes \rg \, \, \subset \,\, \ggo$
is an ideal. Denote respectively by $\Ll_{\geq 2}, \G_2\leq \G$ the connected Lie subgroups with Lie algebras $\lgo_{\geq 2}, \ggo_2$. Since $\G \simeq \Ll\ltimes \Rr$, we have that $\G_2 \simeq \Ll_{\geq2} \ltimes \Rr$, with $\Ll_{\geq 2}$ closed in $\Ll$ by Lemma \ref{lem_Sfactor}. Thus, $\G_2$ is a closed normal subgroup of $\G$.  Consider the isometric action of $\G_2$ on $\G/\Hh$ by left-multiplication. The closed subgroup $\G_2$  acts properly on $\G/\Hh$, and being normal in $\G$, all isotropy subgroups for this action are conjugate. Thus,
the orbit space $\G_2 \backslash \G/\Hh$ is a smooth manifold. It admits a transitive action by $\G_2 \backslash \G =: \Ll_1$ with 
compact isotropy $\Hh_1 \simeq \Hh/\Hh\cap \G_2$: see
 \cite[Prop. 5]{BB}.

Let $\bar g$ be the unique $\Ll_1$-invariant metric on $\Ll_1/\Hh_1$ such that we have a Riemannian submersion
\begin{equation}\label{eqn_Rsubmersion}
	\pi : (\G/\Hh,g ) \, \to \,  (\Ll_1/\Hh_1, \bar g).
\end{equation}

\begin{lemma}\label{lem_Rmin}
The fibres of $\pi$ are pairwise isometric, minimal submanifolds of 
$(\G/\Hh, g)$.
\end{lemma}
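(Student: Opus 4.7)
The lemma makes two separate assertions, which I would prove in turn. The pairwise isometry is a direct consequence of the normality $\G_2 \triangleleft \G$: since $\G$ acts on $(M, g)$ by isometries and $L_x(\G_2 \cdot y\Hh) = x \G_2 y\Hh = \G_2 \cdot xy\Hh$ by normality, each left translation $L_x$ restricts to an isometry between the fibres of $\pi$ over $y\Hh$ and $xy\Hh$, and transitivity of $\G$ finishes the claim.

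The same observation shows that the mean curvature vector field $\vec H$ of the fibration is $\G$-invariant, hence of constant pointwise norm; thus for minimality it suffices to check $\vec H(o) = 0$ at $o = e\Hh$. Fixing the reductive decomposition $\ggo = \hg \oplus \mg$ with $\mg = \hg^{\perp_g}$ and identifying $T_o M \simeq \mg$, and writing $V_o, H_o \subset \mg$ for the vertical and horizontal subspaces, the standard Koszul formula for reductive homogeneous spaces yields
\[
g(\vec H(o), X) \;=\; \sum_\alpha g\bigl([X, v_\alpha]_\mg, v_\alpha\bigr) \;=\; \tr\bigl(\proj_{V_o} \circ \ad_X \big|_{V_o}\bigr)
\]
for every $X \in H_o$ and every $g$-orthonormal basis $\{v_\alpha\}$ of $V_o$; the $U$-term is the only contribution since $[v_\alpha, v_\alpha] = 0$. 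It therefore suffices to show that this trace vanishes.

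To do so, I would exploit the Levi presentation together with the structural results of Section \ref{sec_homspaces}. The orthogonality $\Ll \cdot o \perp \Rr \cdot o$ splits $V_o = V_o^\lgo \oplus_\perp \rg$, where $V_o^\lgo$ is the image of $\lgo_{\geq 2}$ in $\mg$, and forces $H_o \subset \mg \cap \lgo$. Writing $X = X_1 + X_{\geq 2}$ along the ideal decomposition $\lgo = \lgo_1 \oplus \lgo_{\geq 2}$, and recalling from Proposition \ref{prop_Rnhomognew} that $\hg = \kg_{ss}$ is semisimple compact, Schur's lemma applied to the $\Ad(\Hh)$-module $\mg$ yields $g(\lgo_{\geq 2}, \hg_1) = 0$ and $g(\pg_{\geq 2}, \zg(\kg_1)) = 0$, and from these one identifies $V_o^\lgo$ algebraically with $\mg_{\geq 2} = \pg_{\geq 2} \oplus \zg(\kg_{\geq 2})$ and forces $X_{\geq 2} \in \zg(\kg_{\geq 2})$.

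The trace then splits as $\tr(\ad_X |_\rg) + \tr(\proj_{\mg_{\geq 2}} \circ \ad_{X_{\geq 2}} |_{\mg_{\geq 2}})$, and I claim both pieces vanish. The first is zero because $\lgo$ is semisimple, hence perfect, so every finite-dimensional representation of $\lgo$, in particular its action on $\rg$, is traceless. For the second, $\ad_{X_{\geq 2}}$ kills $\zg(\kg_{\geq 2})$ by abelianness, so the trace reduces to $\tr(\ad_{X_{\geq 2}} |_{\pg_{\geq 2}})$; and since $X_{\geq 2} \in \zg(\kg_{\geq 2}) \subset \kg_{\geq 2}$ and $g|_{\pg_{\geq 2}}$ is a positive multiple of the Killing form on each irreducible piece $\pg_i$ (by Schur once more), the Cartan relation $[\kg_i, \pg_i] \subset \pg_i$ makes $\ad_{X_{\geq 2}}|_{\pg_{\geq 2}}$ skew-adjoint with respect to $g$, so its trace vanishes as well. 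The most delicate step will be the Schur-type bookkeeping that singles out $X_{\geq 2} \in \zg(\kg_{\geq 2})$ and algebraically identifies $V_o^\lgo$ with $\mg_{\geq 2}$.
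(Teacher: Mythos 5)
Your isometry argument and the reduction of minimality to the vanishing of $\tr\big(\proj_{V_o}\circ \ad_X|_{V_o}\big)$ at $o$ for horizontal Killing directions $X\in\lgo$ are fine and close to the paper, as is your use of perfectness of $\lgo$ for the $\rg$-block. The gap is in your treatment of the semisimple block. The Schur-type claims — that $g(\pg_{\geq 2},\zg(\kg_1))=0$ and that horizontality forces $X_{\geq 2}\in\zg(\kg_{\geq 2})$ — are only valid when the relevant $\Ad(\Hh)$-modules are pairwise inequivalent. Whenever some factor $\lgo_i\simeq\slg(2,\RR)$ with $i\geq 2$ (so $\hg_i=0$), all of $\lgo_i$ sits in the trivial isotypic component together with every $\zg(\kg_j)$ (and with $\lgo_1$ itself if $\lgo_1\simeq\slg(2,\RR)$), and an invariant metric may mix these directions arbitrarily. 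In the extreme case $\G=\Sl_*(2,\RR)^k$, $\Hh=\{e\}$ — precisely the situation the paper must handle — the isotropy module $\mg=\ggo$ is entirely trivial, Schur gives nothing, the horizontal space of the $\ggo_2$-orbit foliation is just a generic complement of $\ggo_2$, and there is no reason for $X_{\geq 2}$ to lie in $\zg(\kg_{\geq 2})$; your skew-adjointness step then collapses. Note also that orthogonality of distinct simple factors is exactly Proposition \ref{prop_simpleperp}, which is proved \emph{later} using Theorem \ref{thm_main2} together with the integral minimality established in this very section, so nothing of that sort may be invoked here without circularity. (A small additional point: $g(\lgo_{\geq 2},\hg_1)=0$ is true by the convention $\hg\perp\mg$ in the extended metric, not by Schur.)

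The paper's proof avoids any structural analysis of the horizontal vectors. It represents the normal directions by Killing fields $X_j\in\lgo$ (possible by \eqref{eqn_orth} and $\rg\subset\ggo_2$), and shows that the mean curvature in the direction $X_j$ equals the \emph{full} trace $\tr(\ad X_j)|_{\ggo_2}$ over the ideal $\ggo_2$: the correction term $\sum_k g([X_j,Z_k],Z_k)$ over an orthonormal basis $\{Z_k\}$ of $\hg_2$ vanishes because $[X_j,\hg_2]\subset\mg_2\perp_g\hg_2$. Since $\ggo_2$ is an ideal, $X\mapsto(\ad X)|_{\ggo_2}$ is a representation of the perfect Lie algebra $\lgo$, so writing $X_j$ as a sum of brackets makes this trace a trace of commutators, hence zero — with no case distinctions and no hypotheses on the shape of horizontal vectors. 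If you replace your block-by-block evaluation of $\tr\big(\proj_{V_o}\circ\ad_X|_{V_o}\big)$ by this ``trace over the whole ideal'' identity, your argument goes through.
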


\begin{proof}
Notice that for any $x\in \G$ we have
\[
	\G_2 \cdot (x\Hh) = x \cdot (x^{-1} \G_2 x ) \cdot o = x \cdot (\G_2 \cdot o)\,.
\] 
Thus any two $\G_2$-orbits are isometric via an ambient isometry. 
By Lemma \ref{lem_Rnhomog}  $\hg = \hg_1 \oplus \hg_{2}$ is a sum of ideals, with $\hg_{2} \subset \ggo_2$. Then the isotropy subalgebra of the homogeneous space $\G_2 \cdot o \simeq \G_2 / (\Hh \cap \G_2)$ is $\hg \cap \ggo_2 = \hg_2$. 
Denote by $\lgo_1 = \hg_1 \oplus \mg_1$, $\ggo_2 =\hg_2 \oplus \mg_2$ the corresponding canonical reductive decompositions. 
Let $\{ U_i\}$ be Killing fields in $\mg_2 \subset \ggo_2$ and let $\{ N_j\}$ span the normal bundle of $\G_2 \cdot o$ close to $o$, such that $\{U_i \} \cup \{ N_j\}$ at $o$ form an orthornormal basis of $T_o \G/\Hh$. Then, the mean curvature vector of $\G_2\cdot o$ at $o$ is given by
\[
	\sum_{i,j} g(\nabla_{U_i} N_j, U_i)_o \,  N_j   = - \sum_{i,j} g(N_j, \nabla_{U_i} U_i)_o N_j.
\]
Since the last expresion is tensorial in $N_j$, by homogeneity we may replace the $N_j$ by Killing fields $X_j\in \ggo$  with  $X_j(o)=N_j(o)$. By \eqref{eqn_orth} and $\rg \subset \ggo_2$ we have that $X_j\in \lgo$. Moreover by adding elements in the isotropy $\hg \subset \lgo$ we may assume that $X_j \in (\mg_1 \oplus \mg_2)\cap \lgo$. For each $j$, formula \cite[(7.27)]{Bss} for the Levi-Civita connection of Killing fields then yields 
\begin{align*}
- \sum_i g(X_j, \nabla_{U_i} U_i) 
		=  \sum_i g([X_j,U_i], U_i) + \sum_k g([X_j, Z_k], Z_k) =   \tr (\ad X_j) |_{\ggo_2}.
\end{align*}
Here,
we have extended $g$ from $\mg = \mg_1\oplus \mg_2$ to all of $\ggo$ by making $\hg \perp \mg$, and  we denoted by $\{Z_k\}_k$ an orthonormal basis of $\hg_2\subset \ggo_2$. Then, the second
equality is justified by the fact that $[X_j, \hg_2] \subset \mg_2 \perp_g \hg_2$ using that $[\mg_1,\hg_2] = 0$.  

Observe now that since $\ggo_2$ is an ideal, the map $\lgo\ni X \mapsto (\ad X)|_{\ggo_2}$ is a Lie algebra representation.
Using that $\lgo$ is semisimple we may write $X_j = [Y_j, Z_j]$,  which 
implies  $\tr (\ad X_j) |_{\ggo_2} = \tr [(\ad Y_j) |_{\ggo_2}, (\ad Z_j) |_{\ggo_2}] = 0$  and the proof is finished.
\end{proof}

Let $\bar \Ll_1 \leq \Ll_1$ be the image of 
an Iwasawa subgroup $\bar \G$ of $\G$ 
under the quotient  $\pi_2 : \G \to \G_2 \backslash \G$.

\begin{lemma}\label{lem_barL1}
The group $\bar \Ll_1$ acts on $(\Ll_1/\Hh_1, \bar g)$ with cohomogeneity one, embedded orbits and orbit space $S^1$. 
\end{lemma}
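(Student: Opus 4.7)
The plan is to transfer the cohomogeneity-one properties of the $\bar \G$-action on $(\G/\Hh, g)$ provided by the Levi presentation down to $(\Ll_1/\Hh_1, \bar g)$ via the Riemannian submersion $\pi$ of \eqref{eqn_Rsubmersion}. The key algebraic observation is that by Definition \ref{def_Iwasawa}, the Lie algebra of $\bar \G$ equals $(\ag_1 \oplus \ngo_1) \oplus \ggo_2$; in particular it contains $\ggo_2$, and since both groups are connected this gives $\G_2 \subset \bar \G$.

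First I would show that every $\bar \G$-orbit in $\G/\Hh$ is $\pi$-saturated. The inclusion $\pi(\bar \G \cdot x) = \pi_2(\bar \G) \cdot \pi(x) = \bar \Ll_1 \cdot \pi(x)$ is immediate. Conversely, if $\pi(y) \in \bar \Ll_1 \cdot \pi(x)$, then $\pi(y) = \pi_2(a) \cdot \pi(x) = \pi(a \cdot x)$ for some $a \in \bar \G$, so $y$ and $a\cdot x$ lie in the same fibre of $\pi$, which by construction is a single $\G_2$-orbit, and hence $y \in \G_2 \cdot a \cdot x \subset \bar \G \cdot x$ using $\G_2 \subset \bar \G$. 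This yields $\bar \G \cdot x = \pi^{-1}(\bar \Ll_1 \cdot \pi(x))$, so $\pi$ descends to a homeomorphism between orbit spaces
\[
	(\G/\Hh)/\bar \G \;\simeq\; (\Ll_1/\Hh_1)/\bar \Ll_1.
\]
Since the left-hand side is $S^1$ by the Levi presentation, the $\bar \Ll_1$-orbit space is also $S^1$, and in particular the $\bar \Ll_1$-action has cohomogeneity one.

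For the embeddedness of the $\bar \Ll_1$-orbits, I would invoke the standard fact that a proper isometric action of a Lie group on a Riemannian manifold has closed, embedded orbits. The action of $\bar \Ll_1 \leq \Ll_1$ on $(\Ll_1/\Hh_1, \bar g)$ is isometric by construction; properness reduces to verifying that the isotropy at each $y\Hh_1$ is compact, which is clear because it is the closed subgroup $\bar \Ll_1 \cap y\Hh_1 y^{-1}$ of the compact group $y \Hh_1 y^{-1}$ (note that $\Hh_1 \simeq \Hh/(\Hh \cap \G_2)$ is compact as a quotient of the compact $\Hh$).

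The only step requiring genuine verification is the $\pi$-saturation of the $\bar \G$-orbits, and even this is a direct consequence of the inclusion $\G_2 \subset \bar \G$; once this is in hand, all remaining assertions are formal consequences of standard cohomogeneity-one and proper-action machinery, so I do not anticipate any serious obstacle.
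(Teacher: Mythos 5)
Your main argument is essentially the paper's: both proofs rest on the observation that $\ggo_2\subset\bar\ggo$, hence $\G_2\leq\bar\G$, so that each $\bar\G$-orbit is a union of $\pi$-fibres and $\pi$ induces a bijection (in fact a homeomorphism) between the two orbit spaces. That part of your write-up is correct and complete.

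The one step that would fail as stated is your treatment of embeddedness: ``properness reduces to verifying that the isotropy at each point is compact'' is false in general. An irrational one-parameter flow on a torus acts isometrically with trivial (hence compact) isotropy, yet the action is not proper and the orbits are not embedded. Compact isotropy is equivalent to properness only for \emph{transitive} actions (this is how the paper uses \cite{Dtt88}); for a non-transitive isometric action you need the group to be closed in the isometry group. The gap is easily repaired, and in two ways. Either note that your saturation identity $\bar\G\cdot x=\pi^{-1}(\bar\Ll_1\cdot\pi(x))$ already does the job: the $\bar\G$-orbit is closed in $\G/\Hh$ (by Proposition \ref{prop_periodicity}), and since $\pi$ is a surjective submersion it is an open map, so the complement $\bar\Ll_1\cdot\pi(x)^c=\pi\big((\bar\G\cdot x)^c\big)$ is open; a closed orbit of a Lie group action is embedded. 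Or argue directly that $\bar\Ll_1=\Aa_1\N_1$ is closed in $\Ll_1$ (Iwasawa decomposition, as in Lemma \ref{lem_Iwaclosed}) and $\Ll_1$ is closed in $\Iso(\Ll_1/\Hh_1,\bar g)$ because its isotropy $\Hh_1$ is compact, whence the $\bar\Ll_1$-action is proper. With either fix your proof is sound and coincides with the paper's.
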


\begin{proof}
We know by Proposition \ref{prop_periodicity} that the action of $\Go$ on $\G/\Hh$ has those properties. Therefore, to prove the lemma it suffices to show that $\pi$ gives a bijection between $\bar \G$-orbits in $\G/\Hh$ and $\bar \Ll_1$ orbits in $\Ll_1/\Hh_1$. To see this, consider the commutative diagram
\[
\begin{tikzcd}
		\G  \arrow[r, "\pi_2"]  \arrow[d, "\pi_\Hh"] & \G_2 \backslash \G=\Ll_1 \arrow[d, "\pi_1"] \\
		\G/\Hh  \arrow[r, "\pi"] &  \G_2 \backslash \G / \Hh =\Ll_1/\Hh_1
\end{tikzcd}
\]
The map $\pi_2$ is a Lie group morphism, and $\pi_1$ is $\Ll_1$-equivariant ($\Ll_1 = \G_2\backslash \G$). From this one easily deduces that $\pi$ maps $\bar \G$-orbits onto $\bar \Ll_1$-orbits: $\pi(\bar \G \cdot p) = \bar \Ll_1 \cdot \pi(p)$. Now assume that for some $p,q \in \G/\Hh$, $\pi(p)$ and $\pi(q)$ belong to the same $\bar \Ll_1$-orbit, say $\pi(p) = \bar l_1 \cdot \pi(q)$ for some $\bar l_1 = \pi_2(\bar x) \in \bar\Ll_1$, $\bar x\in \Go$, and let us show that $\Go \cdot p = \Go \cdot q$. Write $p = x\Hh$, $q=y\Hh$, $x,y\in \G$. By the commutative diagram and $\Ll_1$-equivariance of $\pi_1$ we have
\[
	\pi(x\Hh) = \bar l_1 \cdot \pi(y\Hh) = \pi_2(\bar x) \cdot \pi_1 (\pi_2(y)) = \pi_1( \pi_2 (\bar x y) ) = \pi(\bar x y \Hh).
\]
This implies that $p$ and $\bar x \cdot q$ belong to the same $\G_2$-orbit. Since $\G_2 \leq \Go$ and $\bar x\in \Go$, it follows that $\Go \cdot p = \Go \cdot q$.
\end{proof}

If $N$ is a unit normal field to the $\bar \G$-orbits in $\G/\Hh$, then $N$ is basic for the Riemannian submersion $\pi$ and the corresponding vector field $\bar N$ on $\Ll_1/\Hh_1$ is a unit normal field to the $\bar\Ll_1$-orbits. Also, a normal geodesic $\gamma(t)$ to the $\bar \G$-orbits, $\gamma(0) = e\Hh$, is horizontal for $\pi$, thus 
$\bar \gamma(t)=\pi\circ \gamma (t)$ is a normal geodesic to the $\bar \Ll_1$-orbits. The next lemma finally reduces the proof of Theorem \ref{thm_minimal} to the case where $\G$ is simple:

\begin{lemma}\label{lem_redss}
The mean curvatures of $\bar \G \cdot \gamma(t)$ and $\bar \Ll_1 \cdot \bar \gamma(t)$ coincide for all $t\in \RR$. 
\end{lemma}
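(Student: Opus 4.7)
The plan is to identify the $\bar \G$-orbit in $\G/\Hh$ as the $\pi$-preimage of the $\bar \Ll_1$-orbit in $\Ll_1/\Hh_1$, and then invoke a standard Riemannian submersion computation that exploits the minimality of the fibres proved in Lemma \ref{lem_Rmin}. Concretely, since $\G_2 \leq \bar \G$ by construction (the Iwasawa subgroup $\bar \G$ contains all $\ggo_2 = \lgo_{\geq 2} \ltimes \rg$), every $\bar \G$-orbit is saturated for the fibration $\pi$ whose fibres are the $\G_2$-orbits. Combining this with the orbit-correspondence $\pi(\bar \G \cdot p) = \bar \Ll_1 \cdot \pi(p)$ established in Lemma \ref{lem_barL1}, I would conclude $\bar \G \cdot \gamma(t) = \pi^{-1}(\bar \Ll_1 \cdot \bar \gamma(t))$. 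Note also that the unit normal $N$ to the $\bar\G$-orbits is horizontal for $\pi$, because $\gamma$ is a horizontal curve ($\bar\gamma = \pi \circ \gamma$ is geodesic in the base and $\gamma$ is a normal geodesic, hence horizontal by the characterisation of horizontal geodesics in Riemannian submersions).

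Next I would compute the mean curvature at $p = \gamma(t)$ by choosing an adapted orthonormal basis of $T_p(\bar \G \cdot \gamma(t))$: a vertical piece $\{V_j\}$ tangent to the fibre $\G_2 \cdot p$, plus a horizontal piece $\{H_k\}$ spanning the $g$-orthogonal complement of $N$ inside the horizontal distribution. The vectors $\bar H_k := \pi_* H_k$ form an orthonormal basis of $T_{\pi(p)}(\bar \Ll_1 \cdot \bar \gamma(t))$, and $\bar N = \pi_* N$ is the corresponding unit normal. The mean curvature of $\bar \G \cdot \gamma(t)$ at $p$ splits as
\begin{equation*}
\tr L_t \;=\; \sum_{j} g(\nabla_{V_j} N, V_j) \;+\; \sum_{k} g(\nabla_{H_k} N, H_k).
\end{equation*}
For the vertical contribution, using $g(N, V_j) = 0$ one obtains $g(\nabla_{V_j} N, V_j) = -g(N, \nabla_{V_j} V_j)$, whose sum equals $-g(N, H_F)$ with $H_F$ the mean curvature vector of the fibre. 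By Lemma \ref{lem_Rmin} the fibres are minimal, so this contribution vanishes.

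For the horizontal contribution, after possibly replacing $H_k$ and $N$ locally by their basic extensions (which does not change the value at $p$), O'Neill's formula for Riemannian submersions gives $(\nabla_{H_k} N)^H$ as the horizontal lift of $\bar \nabla_{\bar H_k} \bar N$, whence
\begin{equation*}
\sum_{k} g(\nabla_{H_k} N, H_k) \;=\; \sum_{k} \bar g(\bar \nabla_{\bar H_k} \bar N, \bar H_k),
\end{equation*}
which is precisely the mean curvature of $\bar \Ll_1 \cdot \bar \gamma(t)$ at $\bar\gamma(t)$. Adding the two pieces yields the claim. I expect the main (minor) subtlety to be the careful verification that $N$ is horizontal and that the vertical/horizontal split of $T_p(\bar \G \cdot \gamma(t))$ really projects onto $T_{\pi(p)}(\bar \Ll_1 \cdot \bar \gamma(t))$; both follow once one notes that $\dim \bar \G \cdot \gamma(t) - \dim \G_2 \cdot p = \dim \bar \Ll_1 \cdot \bar \gamma(t)$ and that $N$ spans the $g$-orthogonal complement of $T_p(\bar \G \cdot \gamma(t))$ inside $T_p(\G/\Hh)$, which lies in the horizontal distribution because the fibre tangent spaces are contained in $T_p(\bar \G \cdot \gamma(t))$.
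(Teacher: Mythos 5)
Your proposal is correct and follows essentially the same route as the paper: both decompose the tangent space of the $\bar\G$-orbit into directions tangent to the $\G_2$-fibres and horizontal basic directions, kill the fibre contribution using the minimality from Lemma \ref{lem_Rmin}, and identify the horizontal contribution with the mean curvature downstairs via the standard O'Neill property of Riemannian submersions. The extra care you take in verifying that $N$ is horizontal and that the split projects correctly is implicit in the paper's choice of basic fields $\{X_i\}$ and Killing fields $\{U_j\}\subset\ggo_2$, so there is no substantive difference.
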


\begin{proof}
An orthonormal basis for $T_{\gamma(t)} \big(\bar \G \cdot \gamma(t) \big)$ might be chosen by evaluating the vector fields $\{ X_i\} \cup \{ U_j\}$ at $\gamma(t)$, with $X_i$ basic for $\pi$ (horizontal and projectable) and $U_j\in \ggo_2$ Killing fields. The mean curvature of $\bar \G \cdot \gamma(t)$ at $\gamma(t)$ is thus given by
\[
	\sum_i g(\nabla_{X_i} N, X_i)_{\gamma(t)} + \sum_j g(\nabla_{U_j} N, U_j)_{\gamma(t)}.
\]
The second sum vanishes thanks to Lemma \ref{lem_Rmin}. Regarding the first one, notice that being basic, the $X_i$ are $\pi$-related to vector fields 
$\bar X_i$ in $\Ll_1/\Hh_1$, which span the tangent space to $\bar \Ll_1 \cdot \bar \gamma(t)$ at $\bar \gamma(t)$. Hence, 
$	\sum_i g(\nabla_{X_i} N, X_i)_{\gamma(t)} = \sum_i \bar g(\bar \nabla_{\bar X_i} \bar N, \bar X_i)_{\bar \gamma(t)}$ 
by a well-known property of Riemannian submersions,
which is by definition the mean curvature of $\bar \Ll_1 \cdot \bar \gamma(t)$ at $\bar \gamma(t)$.
\end{proof}

Before finishing the proof of Theorem \ref{thm_minimal}, we briefly recall the definition of an Iwasawa decomposition: see \cite{Knapp2e} or \cite{Helgason01}. Let $\lgo$ be the Lie algebra of a simple Lie group $\Ll$ and consider a Cartan decomposition $\lgo = \kg \oplus \pg$. Let $\theta \in \Aut(\ggo)$ be the corresponding Cartan involution
with $\theta|_\kg = \Id_\kg$, $\theta|_\pg = -\Id_\pg$ and $\kf_\theta(X,Y) := -\kf(X, \theta Y)$ a positive definite inner product on $\lgo$, where  $\kf=\kf_\lgo$ denotes the Killing form of $\lgo$. Choose a maximal abelian subalgebra $\ag \subset \pg$, and consider the corresponding root space decomposition 
\[
	\lgo = \lgo_0 \oplus \bigoplus_{\lambda \in \Lambda} \lgo_\lambda\,,
	\quad
	  \lgo_\lambda = \{ X\in \lgo : [H, X] = \lambda(H) \cdot X, \,\,  \forall H \in \ag\} 
\]
$\lambda \in \ag^*$ and $\Lambda \subset \ag^*$ being the set of those nonzero $\lambda$'s for which $\lgo_\lambda \neq 0$. 
Recall that for $\lambda \in \Lambda$ we have $\theta \lgo_\lambda = \lgo_{-\lambda}$, from which $\Lambda = - \Lambda$ follows. Choose a notion of \emph{positivity} in $\ag^*$, so that the set of positive elements $\ag^*_+$ is invariant under addition, multiplication by positive scalars, and $\ag^*\setminus \{ 0\} = \ag^*_+ \dot\cup -\ag^*_+$. Set $\Lambda^+ := \Lambda \cap \ag^*_+$, and consider the nilpotent subalgebra 
\[
	\ngo := \bigoplus_{\lambda \in \Lambda^+} \lgo_\lambda.
\]
The subalgebra $\bar \lgo := \ag \oplus \ngo$ is solvable, and the corresponding connected Lie subgroup $\bar \Ll$ acts on the homogeneous space $\Ll / \Hh$ with cohomogeneity one and with all orbits principal. Here we are using that $\Ll/\K$ is an irreducible non-compact Hermitian symmetric space, thus $\Hh = [\K,\K]$.  For each $\lambda \in \Lambda^+$ we decompose 
$\lgo_\lambda \oplus \lgo_{-\lambda} =  \kg_\lambda \oplus \pg_\lambda$, $ \kg_\lambda \subset \kg$ and 
$\pg_\lambda \subset \pg$,
where $\kg_\lambda = \{ X + \theta X : X\in \lgo_\lambda \}$ and 
$\pg_\lambda = \{X - \theta X : X\in \lgo_\lambda \}$. Set 
\[
	\pg^+ := \bigoplus_{\lambda \in \Lambda^+} \pg_\lambda,
\]
so that $\pg = \ag \oplus \pg^+$ is a $\kf$-orthogonal decomposition. Finally, notice that 
\begin{equation}\label{eqn_knkp+}
	\kg \oplus \ngo = \kg \oplus \pg^+.
\end{equation}

\begin{proof}[Proof of Theorem \ref{thm_minimal}]
By Lemmas \ref{lem_barL1} and \ref{lem_redss}, we may assume that $\G/\Hh = \Ll/\Hh$ with $\Ll = \Ll_1$ simple and with finite center. Let $\Ll = \K \Aa \N$ be an Iwasawa decomposition and $\bar \Ll = \Aa \N \leq \Ll$  be the corresponding Iwasawa subgroup. Notice also,
that the mean curvature of $\bar \Ll$-orbits is  a function on the orbit space $S^1$. 

Firstly, we assume  that $\lgo \not \simeq \sog(2,m)$, $m\geq 3$.
We will show in this case, that all $\bar \Ll$-orbits are minimal hypersurfaces. To that end, let $p = x\Hh \in \Ll/\Hh$. Using that $x\in \Ll = \bar \Ll Z(\K)\Hh$, we may assume without loss of generality that $p = k \Hh$ with $k \in Z(\K)$. Then at $p$  the isotropy subgroup is also $\Hh$, and a reductive decomposition  is given by $\lgo = \hg \oplus \mg$, $\mg = \zg(\kg) \oplus \pg \simeq T_p \Ll/\Hh$. 

Moreover, if in addition $\lgo\not \simeq \slg(2,\RR)$ then $\zg(\kg)$ and $\pg$ are the two inequivalent irreducible summands for the isotropy representation: see Remark \ref{rem_euclhom}. In particular, the metric $g$ restricted to $\pg$ is a multiple of the Killing form. Notice also that for any $Z\in \zg(\kg)$ the linear map $\ad Z$ preserves $\pg$, and $(\ad Z)|_\pg$ is skew-symmetric with respect to  $g$. 

The mean curvature of $\bar \Ll \cdot p$ at $p$ is given by
$\tr L_p = \sum_{i=1}^{n-1} g(\nabla_{X_i} X_i , N)_p$,
where $N$ denotes the unit normal to $\bar \Ll\cdot p$ and $\{ X_i\}_{i=1}^{n-1}$ is a $g_p$-orthonormal basis for $\bar \lgo$. The subspace $\zg(\kg) \oplus \pg^+ \subset \mg$ is the $g$-orthogonal complement of $\ag$ in $\mg$. By \eqref{eqn_knkp+} we have
$
	(\zg(\kg) \oplus \ngo) \cdot p = (\zg(\kg) \oplus \pg^+) \cdot p
	\subset T_p M$.
Since $N(p) \in  (\bar \lgo\cdot p)^\perp \subset (\ag \cdot p)^\perp  =  (\zg(\kg) \oplus \pg^+) \cdot p$, we may choose Killing fields $Z\in 
\zg(\kg)$, $X_\ngo\in \ngo$, such that $(Z+X_\ngo)(p) = N(p)$. Using the formula   \cite{Bss}[(7.27)]
for the Levi-Civita connection on Killing fields, we compute:
\begin{eqnarray}
	- \tr L_p  = \sum_{i=1}^{n-1} g([Z, X_i], X_i)_p +  \sum_{i=1}^{n-1} g([X_\ngo, X_i], X_i)_p\,.\label{eqn_traceL}
\end{eqnarray}
The first term vanishes. Indeed, write $X_i = Z_i + P_i$ where $Z_i \in \kg$, $P_i \in \pg$. Then $[Z, X_i] = [Z, P_i] \in \pg$, and using that $\pg \perp \zg(\kg)$, each summand equals $\la[Z, P_i], P_i \ra $, which vanishes because $(\ad Z)|_\pg$ is skew-symmetric. The second term equals $\tr (\ad_{\bar\lgo} X_\ngo) $, and this vanishes because $\ad_{\bar \lgo} X_\ngo$ is a nilpotent endomorphism.

If on the other hand we had $\lgo \simeq \slg(2,\RR)$ then $\hg = 0$. Choose a Killing field $X^\perp \in \slg(2,\RR)$ so that $X^\perp(p) = N(p)$ and notice that since  $\slg(2,\RR)$ is unimodular,
\[
	-\tr L_p  =  -\sum_{i=1}^{2} g(\nabla_{X_i} X_i , X^\perp)_p =  \sum_{i=1}^{2} g([X^\perp, X_i], X_i)_p = \tr \ad_\lgo X^\perp = 0.
\]
Finally, let us deal with the most complicated 
case $\lgo \simeq \sog(2,m)$, $m\geq 3$.
To see that we may assume that $\Ll$ is centerless,
denote by $\Zz \leq \Ll$ its center, a finite normal subgroup contained in $\K$ by \cite{HlgNeb}[Prop. 16.1.7].
Since $\Zz$ normalizes $\bar \Ll$, its action on $\Ll/\Hh$ by left-multiplication sends $\bar \Ll$-orbits to $\bar \Ll$-orbits. Thus, the mean curvature $\tr L_t$ considered as a function on the orbit space $\K/\Hh \simeq S^1$ is invariant under the action of $\Zz$ on said space. If we would know that on $\Zz \backslash \K / \Hh \simeq S^1 / \Zz \simeq S^1$ the mean curvature has mean zero, the same would follow for the mean curvature function on $\K/\Hh$. Then we may work with $\Zz \backslash \Ll$ instead.

We claim that in order to prove \eqref{eqn_intmin} it is enough to find an isometric involution $f: \Ll/\Hh \to \Ll/\Hh$ fixing $o$ with $ df_o N=-N$, which sends $\bar \Ll$-orbits to $\bar \Ll$-orbits.
Indeed, in the notation of Section \ref{sec_cohom1}, such an isometry must preserve $\Sigma_0 = \bar \Ll \cdot o$, $o = e\Hh = \gamma(0)$,
thus $ f(\gamma(t)) = \gamma(-t)$,  $f(\Sigma_t) = \Sigma_{-t}$
and $ df_{\gamma(t)} N_t = -N_{-t}$.
Since $f$ is an isometry of 
 $\Ll/\Hh$, the mean curvatures of $\Sigma_t$ and $\Sigma_{-t}$ with respect to $N_t$ and $-N_{-t}$ respectively, coincide. Since $\tr L_s$ is the mean curvature with respect to $N_s$, $\tr L_s$ is an odd function on $S^1$ and \eqref{eqn_intmin} holds.

 Let $\Ll$ be the connected, centerless simple Lie group with Lie algebra $\sog(2,m)$. Another group with two connected components and
 the same Lie algebra  is given by
 \[
 	\SO(2,m) := \{ A\in \Gl({2+m},\RR) : \la A v, A w\ra_{2,m} = \la v,w\ra_{2,m}, \quad  \forall v,w\in \RR^{2+m}, \quad \det A = 1 \}
 \]
where $\la x,x \ra_{2,m} = x_1^2 + x_2^2 - x_3^2 - \cdots - x_m^2$.
Thus
 \[
 \sog(2,m)=\left\{	 \left( \begin{array}{cc}
 						A 	& 	 B		\\
 						B^t	& 	C
 	\end{array} \right) \in \glg_{2+m}(\RR), \,\,\,A\in \sog(2), 
 	\,\,\, C\in \sog(m), \,\,\, B\in \RR^{2\times m}\right\}
 \]
Fix the maximal compact subgroup $\hat \K = S(\Or(2)\Or(m)) \leq\SO(2,m)$ containing the center $\Z$ of $\SO(2,m)$ by
\cite{HlgNeb}[Prop. 16.1.7].
Since $\Ll = \SO(2,m)/\Z$, 
 $\K := \hat \K /\Z$ is a maximal compact subgroup of $\Ll$
 with Cartan decomposition $\lgo = \kg \oplus \pg$. In the above matrix representation, $\kg$ is characterized by  $B=0$ and $\pg$ by  $A=C=0$. 

Let us now choose a maximal abelian subalgebra $\ag$ of $\pg$ as follows: 
 \[
 	\ag := \left\{  \left( \begin{array}{cc}
 						0	&  B \\
 						B^t	& 	 0	
 	\end{array} \right) \in \pg : B = 
 	\left( \begin{array}{cccc}
 						a_1	&  0 & \cdots &  0\\
 						0	& a_2 & \cdots & 0
 	\end{array} \right), \quad a_i\in \RR
 	 \right\}.
 \]
 Let $\lgo = \oplus_{\lambda\in \Lambda}$ $\lgo_\lambda$ be the root space decomposition according to $\ag$, and set $\ngo = \oplus_{\lambda \in \Lambda^+} \lgo_\lambda$, for some choice of positive roots $\Lambda^+ \subset \Lambda$. Then, the Iwasawa decomposition 
 \cite{Helgason01}[Ch. IX, Thm. 1.3] asserts 
 a global decomposition  $\Ll = \K \Aa \N$, 
that is $\Ll$ and $\K \times \Aa \times \N$ are diffeomorphic,
 where  $\Aa, \N$ denote the connected subgroups of $\Ll$ with Lie algebras $\ag, \ngo$, respectively. We set $\bar \Ll := \Aa \N \leq \Ll$, a closed, solvable subgroup acting on $\Ll / \Hh$ with cohomogeneity one and with orbits space $\SO(2) \simeq S^1$. 
 Notice that  $\Hh = [\K,\K] $ with
  $\hg \simeq \sog(m)$.

 The isotropy representation of $\Ll/\Hh$ has a one-dimensional trivial factor $\zg(\kg) = \sog(2)$, and $\pg$ is a sum of two irreducible and equivalent $\Ad (\Hh)$-modules of real type, since $m\geq 3$.
 Thus, given $g$, after rotating the first two coordinates in $\RR^{2\times m}$ we may assume without loss of generality that the two rows of $B$ in the matrix representation are $g$-orthogonal.

 Consider now the diagonal element of $x=\SO(2,m)$ whose non-zero entries are given by $x= {\rm diag}(-1,1,-1,1,\Id_{\RR^{m-2}})$
(and notice that $x \not \in \SO(2,m)_0$).
Since  conjugating by $x$ preserves $\Z$,
 it induces an automorphism $\hat f$ of $\Ll$ of order two. Also, $x$ normalizes $\hat \K$, hence $\hat f(\K) \subset \K$ and therefore $\hat f(\Hh) = \Hh$. In this way, $\hat f$ induces a diffeomorphism $f$ of $\Ll/\Hh$ with $f \circ f={\rm id}_{\Ll/\Hh}$ 
 fixing $o:= e\Hh$, via $f(l\Hh) := \hat f(l) \Hh$.
 
 The Lie algebra automorphism $\varphi:= d \hat f|_e : \lgo \to \lgo$, which one can identify also with conjugation by $x$ in the matrix representation of $\sog(2,m)$, \emph{fixes} the subalgebra $\ag$, 
 and therefore preserves the root spaces and in particular $\ngo$. Hence $\hat f(\bar \Ll) = \bar \Ll$ and $f (\bar \Ll \cdot p) = \bar \Ll \cdot f(p)$ for all $p\in \Ll/\Hh$.  Next, notice that for all $l \in \Ll$ we have
 $L_l \circ f =f \circ L_{f^{-1}(l)}$, $L_l(\tilde l \Hh)=l\tilde l\Hh$.
  Since $(df)_o:(\mg,g) \to (\mg,g)$,
 $\mg =\zg(\kg)\oplus \pg$, is an isometry, we see that
  $f^* g = g$. 

If remains to show that $f$ reverses the orientation of the normal geodesic. To that end notice that the $\SO(2)$-orbit through $o$ (also a geodesic) is also preserved by $f$, since $\hat f(\SO(2)) = \SO(2)$. But on Lie algebra level the action of $d \hat f|_e$ on $\sog(2)$ is simply $-\Id_{\sog(2)}$.
 Thus the orientation of this curve transversal to the $\Ss$-orbits is reversed. Clearly the same must hold for the normal geodesic, and this completes the proof. 
 \end{proof}

\section{Proof of Theorem \ref{thm_main}}\label{sec_proofmain}

Let $(\RR^n,g)$ be a homogeneous Einstein space with 
Einstein constant $-1$. Given any effective presentation $\G'/\Hh'=\RR^n$, by \cite[Thm.~4.6]{alek} and \cite[Thm.~0.2]{JblPet14} one can find an `improved' effective presentation $\G/\Hh$ with $\dim \G \leq \dim \G'$ satisfying $\hg \subset \lgo$, for some  Levi decomposition $\ggo = \lgo \ltimes \rg$. Indeed, notice that $\dim \ag \leq \dim \zg(\ug)$ in the notation of \cite[$\S$3.2]{JblPet14},  thus the dimension of the modified transitive group constructed in \cite[Prop.3.14]{JblPet14} cannot be larger.  In addition with respect to this presentation the metric satisfies \eqref{eqn_orth} by \cite[Thm.~2.4]{AL16}.

The above argument ensures that we may take $\dim \G$ to be minimal among all transitive groups, and still ensure that \eqref{eqn_orth} is satisfied. By doing so, we can apply Proposition \ref{prop_periodicity} and after quotienting by a discrete group of isometries we obtain another Einstein homogeneous space with Einstein constant $-1$, which for simplicity we denote by $(\G/\Hh,g)$, on which all Iwasawa subgroups $\bar \G \leq \G$ act with cohomogeneity one, embedded orbits and orbit space $S^1$. In short, $(\G/\Hh,g)$ has a Levi presentation (see Definition \ref{def_Levi}).

We turn to the main geometric result in this section: based on Theorem \ref{thm_main2}, we show that the simple factors are pairwise orthogonal.

\begin{proposition}\label{prop_simpleperp}
Suppose that $(\G/\Hh,g)$ has a Levi presentation and $\ricci_g = -g$.
Then, any two simple factors $\lgo_i,\lgo_j \subset \lgo$, $i\neq j$,  satisfy 
$\lgo_i \cdot o \perp_g \lgo_j \cdot o$, $o = e\Hh$.
\end{proposition}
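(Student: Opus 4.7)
\medskip

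Fix two distinct simple factors $\lgo_i, \lgo_j$ of $\lgo$. The plan is to extract the orthogonality $\lgo_i \cdot o \perp_g \lgo_j \cdot o$ from Theorem \ref{thm_main2} applied to cohomogeneity-one actions of Iwasawa subgroups. By Theorem \ref{thm_minimal} there is an Iwasawa subgroup $\bar \G \leq \G$ associated to $(\lgo_i, \kg_i)$ which acts on $(\G/\Hh, g)$ with cohomogeneity one, closed embedded orbits, orbit space $S^1$, and integrally minimal orbits. Since $\ric_g = -g$ implies the orbit-Einstein condition with Einstein constant $-1$, Theorem \ref{thm_main2} applies and every $\bar\G$-orbit is a standard homogeneous space.

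Next I would unpack standardness into useful algebraic information. The rigidity analysis in the proof of Theorem \ref{thm_main2} actually gives the stronger conclusion that $\beta^+_{\bar\ggo} := \tfrac{1}{\Vert\beta\Vert^2}\beta + \Id \in \Der(\bar\ggo)$, with $\Im \beta^+_{\bar\ggo}$ equal to the nilradical $\bar\ngo$ of $\bar\ggo$, and $\bar\ngo^\perp = \ker\beta^+_{\bar\ggo}$ a Lie subalgebra. Now $\bar\ggo = \lgo_{\neq i} \ltimes \bar\rg_i$ with $\bar\rg_i = (\ag_i \oplus \ngo_i) + \rg$, so each other simple factor $\lgo_k$, $k \neq i$, sits inside the Levi complement of $\bar\ggo$. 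A standard fact about derivations of Lie algebras with image in the nilradical is that they must vanish on any semisimple Levi complement; applied to $\beta^+_{\bar\ggo}$, this forces $\lgo_{\neq i} \subset \ker\beta^+_{\bar\ggo} = \bar\ngo^\perp$. In particular $\lgo_j \cdot o \perp_g \bar\ngo \cap \mg \supset (\ag_i \oplus \ngo_i)\cdot o$.

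Iterating this construction by exchanging the roles of $i$ and $j$, I obtain the analogous conclusion $\lgo_i \cdot o \perp_g (\ag_j \oplus \ngo_j)\cdot o$. Using the description $\lgo_k \cap \mg = \zg(\kg_k) \oplus \ag_k \oplus \pg_k^+$ from Remark \ref{rem_euclhom}, together with the identity $\kg_k \oplus \ngo_k = \kg_k \oplus \pg_k^+$ and the fact that the isotropy representations on $\zg(\kg_k)$ and $\pg_k$ are inequivalent (or handled case by case as in the proof of Theorem \ref{thm_minimal}), I can upgrade the partial orthogonalities to the full statement $g(\lgo_i \cdot o, \lgo_j \cdot o) = 0$. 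The remaining cross-terms involving the one-dimensional pieces $\zg(\kg_i)$ and $\zg(\kg_j)$ would be handled using the commuting $S^1$-actions provided by the centers $Z(\K_i)$ from the proof of Proposition \ref{prop_periodicity}, which act by isometries and preserve the decompositions.

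The main obstacle I anticipate is making the assertion $\lgo_{\neq i} \subset \ker\beta^+_{\bar\ggo}$ fully rigorous. The derivation property combined with $\Im \beta^+_{\bar\ggo} \subset \bar\ngo$ does restrict $\beta^+_{\bar\ggo}|_{\lgo_{\neq i}}$ significantly, but one must carefully reconcile the Killing-form-orthogonal Levi decomposition with the $g$-orthogonal decomposition $\bar\ggo = \bar\ngo \oplus \bar\ngo^\perp$, and check compatibility with the isotropy subalgebra $\bar\ggo \cap \hg = \oplus_{k \neq i}\kg_k^{ss}$. This bookkeeping, rather than any deep new idea, should be the most delicate part of the argument.
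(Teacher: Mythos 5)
Your overall strategy matches the paper's: apply Theorem \ref{thm_minimal} and Theorem \ref{thm_main2} to an Iwasawa subgroup $\bar\G$ associated to $(\lgo_i,\kg_i)$, extract from standardness of the $\bar\G$-orbits that $\lgo_{\neq i}$ is orthogonal to the nilradical $\bar\ngo$ of $\bar\ggo$, and then upgrade. But the central step rests on a ``standard fact'' that is false: a derivation of a Lie algebra with image in the nilradical need \emph{not} vanish on a Levi complement. For $\ggo=\slg(2,\RR)\ltimes\RR^2$, the inner derivation $\ad(v)$ with $v\in\RR^2\setminus\{0\}$ has image in the nilradical $\RR^2$ and is nonzero on $\slg(2,\RR)$. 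What is true is that $\ker\beta^+_{\bar\ggo}=\bar\ngo^{\perp_g}$ is a subalgebra complementary to $\bar\ngo$, hence contains \emph{some} Levi subalgebra of $\bar\ggo$ --- but Levi subalgebras are only unique up to conjugation by $\exp(\ad\,\bar\ngo)$, so there is no a priori reason this one equals $\lgo_{\geq 2}$. Pinning it down is exactly the content of \eqref{eqn_ngperpl} in the paper, and it requires two inputs you never invoke: the standardness of the ambient Einstein metric ($\ug=\ngo^{\perp}$ is a subalgebra of $\ggo$ by \cite{alek}) and the orthogonality \eqref{eqn_orth}, which together give $\bar\ngo^{\perp}\subset\ug\cap\bar\ggo=\sg_1\oplus\lgo_{\geq2}\oplus\ag$; one then argues that $[\bar\ngo^\perp,\bar\ngo^\perp]$ is a semisimple subalgebra of $[\sg_1,\sg_1]\oplus\lgo_{\geq2}$ isomorphic to $\lgo_{\geq2}$, hence equal to it. Without this, your claim $\lgo_{\neq i}\subset\ker\beta^+_{\bar\ggo}$ is essentially a restatement of what is to be proved. (A smaller error in the same step: $\ag_i\not\subset\bar\ngo$, so even granting your claim you only get $\lgo_j\cdot o\perp\ngo_i\cdot o$, not $\perp(\ag_i\oplus\ngo_i)\cdot o$.)

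The upgrade from $\ngo_i\cdot o\perp\lgo_j\cdot o$ to full orthogonality is also left vague. Symmetrizing in $i$ and $j$ does not reach the cross-terms between $\zg(\kg_i)\oplus\ag_i\oplus(\pg_i\ominus\ag_i)$ and the corresponding pieces of $\lgo_j$, and the appeal to commuting $S^1$-actions is not worked out. The paper's route here is concrete: when $\lgo_i\not\simeq\slg(2,\RR)$, the summand $\pg_i$ is $\Ad(\Hh)$-isotypical, so $\pg_i\cdot o\perp\lgo_{\geq2}\cdot o$ for free; one then shows $\ngo_i\cdot o\not\subset\pg_i\cdot o$ (else $\sg_i\oplus\hg_i=\pg_i\oplus\hg_i$ would force $[\pg_i,\pg_i]\subset\hg_i\oplus\pg_i$, contradicting $[\pg_i,\pg_i]=\kg_i$), and concludes by the codimension-one position of $\pg_i\cdot o$ in $\lgo_i\cdot o$; the case $\lgo_i\simeq\slg(2,\RR)$ is handled by running the argument for three Iwasawa decompositions with linearly independent nilradicals. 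You would need to supply arguments of this kind to close the proof.
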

 
\begin{proof}
Let $i=1$ and  $\lgo_{\geq 2}$ be the unique complementary ideal
of $\lgo_1$ in $\lgo$, so that $\lgo = \lgo_1 \oplus \lgo_{\geq 2}$. By Proposition \ref{prop_Rnhomognew}, the isotropy subalgebra decomposes as a direct sum of ideals $\hg = \hg_1 \oplus \hg_{\geq2}$ with $(\lgo_1, \hg_1)$ the infinitesimal data of an $\RR$-bundle over an irreducible Hermitian symmetric space $\Ll_1/\K_1$ of non-compact type, $\hg_1=\kg_1^{ss}$ and $\hg_{\geq 2} \subset \lgo_{\geq 2}$. 
By Lemma \ref{lem_maxab} there exists a maximal compactly embedded subalgebra of $\kg_1 \subset \lgo_1$ containing $\hg_1$ with $\hg_1 = [\kg_1,\kg_1]$. 

Let $\Go \leq \G$ be an Iwasawa subgroup corresponding
to $(\lgo_1, \kg_1)$ as given by Theorem \ref{thm_minimal}.
Since these $\Go$-orbits are integrally minimal,  by Theorem \ref{thm_main2} they are standard homogeneous spaces. In order to exploit this property, 
let $\lgo_1 = \kg_1 \oplus \ag_1\oplus \ngo_1$ be the Iwasawa decomposition for which $\sg_1 := \ag_1 \oplus \ngo_1 \subset \bar\ggo$,
and let $\ngo$ denote the nilradical of $\ggo$.
Then $\bar\ngo := \ngo_1 \ltimes \ngo$ is the nilradical of $\bar \ggo$. Set 
 $\bar \ug := \ngoo^\perp \subset \ggoo$,
$\ug:= \ngo^\perp \subset \ggo$,  $\ag := \ngo^\perp \subset \rg$, and recall that in order to define these orthogonal complements we extend the metric $g$ to all of $\ggo$  as in Definition \ref{def_stand}. Thus,
\[
    \ggo= \big( \rlap{$\overbrace{\phantom{\hg_1 \oplus  \zg(\kg_1) \oplus \ag_1\oplus\ngo_1}}^{\lgo_1}$}  \hg_1 \oplus  \zg(\kg_1) \oplus \underbrace{\ag_1\oplus\ngo_1 \oplus \lgo_{\geq 2}  \big) \ltimes  ( \rlap{$\overbrace{\phantom{\ag\oplus \ngo}}^{\rg}$}  \ag \oplus \ngo)}_{\bar \ggo}, \qquad \bar \ggo = \lgo_{\geq 2} \ltimes \big(\ag_1 \oplus \ag \oplus \rlap{$\overbrace{\phantom{\ngo_1\oplus \ngo}}^{\bar \ngo} $} \ngo_1 \oplus \ngo \big),
\]
 where $\oplus$ denotes vector spaces direct sum. We now claim that 
\begin{eqnarray}\label{eqn_ngperpl}
  \ngo_1 \cdot o \perp \lgo_{\geq 2} \cdot o\,.
\end{eqnarray}
To see that, notice that the subspaces $\ug$, $\bar \ug$ satisfy
$\bar \ug \subset \ug \cap \bar \ggo = \sg_1\oplus \lgo_{\geq 2}\oplus \ag$,  the last equality thanks to \eqref{eqn_orth}.
Also, since $(\G/\Hh, g)$ 
is Einstein, it is also standard by \cite{alek}. Thus,
the orthogonal complements $\ug$ and $\bar \ug$ are both  Lie subalgebras
of $\ggo$. Since $\bar \ug \simeq \bar \ggo/\bar \ngo$ is reductive (that is, the direct sum of a semisimple ideal and the center), $[\bar \ug, \bar \ug]$ is semisimple and isomorphic to a Levi factor $\lgo_{\geq 2}$ of $\bar \ggo$ (see \cite[3.16.3--4]{Varad84}). Moreover,
$	[\bar \ug, \bar \ug] \subset [\sg_1, \sg_1] \oplus \lgo_{\geq 2}$
using that $[\ggo, \rg] \subset \ngo$: see \cite[ 3.8.3]{Varad84}.
Since $ [\sg_1, \sg_1] \oplus \lgo_{\geq 2}$  is a Lie algebra direct sum of a solvable and a semisimple ideal, it has exactly one semisimple subalgebra isomorphic to $\lgo_{\geq 2}$. Hence $[\bar \ug, \bar \ug] = \lgo_{\geq 2}$ and in particular, $\ngo_1 \subset \bar \ngo \perp \bar \ug \supset \lgo_{\geq 2}$, which shows \eqref{eqn_ngperpl}.

We consider now two different cases. Assume first that $\hg_1\neq 0$,
that is, $\lgo_1 \not\simeq \slg(2,\RR)$. 
Then, the reductive complement $\pg_1$ of the symmetric pair $(\lgo_1,\kg_1)$ is an $\Ad(\Hh)$-isotypical summand in $\Ll/\Hh$: see Remark \ref{rem_euclhom}. Thus, $\pg_1 \cdot o \perp \lgo_{\geq 2}\cdot o$. 
On the other hand,  $\ngo_1 \cdot o \perp \lgo_{\geq 2} \cdot o$ 
by \eqref{eqn_ngperpl}. 
If we would know that $\ngo_1 \cdot o \not\subset \pg_1 \cdot o$, 
then from the fact that $\pg_1 \cdot o$ is of codimension one inside $\lgo_1\cdot o$, we could conclude that $\lgo_1 \cdot o \perp \lgo_{\geq 2} \cdot o$ and the proof would follow in this case.
 Suppose on the contrary that $\ngo_1 \cdot o \subset \pg_1 \cdot o$, which is equivalent to $\ngo_1 \oplus \hg_1\subset \pg_1 \oplus \hg_1$. Since $\ag_1 \subset \pg_1$,  this yields
$\sg_1 \oplus \hg_1\subset \pg_1 \oplus \hg_1$ and by
counting dimensions one gets
$\sg_1 \oplus \hg_1= \pg_1 \oplus \hg_1$. 
Using this several times we observe that
\[
	[\pg_1, \pg_1] \subset [\hg_1 \oplus \sg_1, \hg_1 \oplus \sg_1] \subset \hg_1 + [\sg_1,\sg_1] + [\hg_1,\sg_1] \subset \hg_1 + \sg_1 + [\hg_1, \hg_1\oplus \pg_1] \subset \hg_1 \oplus \pg_1,
\]
contradicting the fact that $[\pg_1,\pg_1]=\kg_1$ (see \cite[Prop. 13.1.10]{HlgNeb}). 

Finally suppose that 
$\lgo_1 \simeq \slg(2,\RR)$. Again we have $\ngo_1 \cdot o \perp \lgo_{\geq 2} \cdot o$ by \eqref{eqn_ngperpl}. We may apply this argument to different Iwasawa groups $\bar \G$ defined by considering other Iwasawa decompositions for $\slg(2,\RR)$, which can be obtained by conjugating with elements in $\SO(2)$. By doing so with three different decompositions whose nilradicals are linearly independent, we conclude that $ \lgo_1 \cdot o \perp \lgo_{\geq 2} \cdot o$, and this finishes the proof.
\end{proof}

Recall the following result, which is a particular case of the main theorem in \cite{Nkn2}:

\begin{theorem}\cite{Nkn2}\label{thm_niko}
Let $(\Ll/\Hh, g)$ be a homogeneous space with $\Ll$ semisimple and $\Hh$ compact, and consider a Cartan decomposition $\lgo = \kg \oplus \pg$ with $\hg \subset \kg$ and a reductive complement $T_o \Ll/\Hh \simeq \mg = \qg \oplus \pg$ with  $\hg \oplus \qg = \kg$. Assume that $g$ is \emph{awesome}, that is, $\kg\cdot o \perp_g \pg\cdot o$ for $o=e\Hh$. If in addition  $\Ricci_g |_{\pg\times\pg} = c \cdot g|_{\pg\times \pg}$ for some $c\in \RR$, then either $\hg = \kg$, or there exists $Z\in \kg$ with $\Ricci_g(Z,Z) > 0$. 
\end{theorem}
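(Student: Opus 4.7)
The plan is to apply the standard Ricci formula for the reductive decomposition $\lgo = \hg\oplus\mg$; since $\Ll$ is semisimple (hence unimodular), this reads
\[
\ric_g(X,X) = M(X,X) - \tfrac{1}{2}B(X,X), \qquad X\in\mg,
\]
where $B$ is the Killing form of $\lgo$ and $M$ is the moment-map term
\[
M(X,X) = -\tfrac{1}{2}\sum_i \|[X,X_i]_\mg\|^2 + \tfrac{1}{4}\sum_{i,j}\la [X_i,X_j]_\mg, X\ra^2
\]
evaluated on any $g$-orthonormal basis $\{X_i\}\subset\mg$. The awesome hypothesis lets me choose $\{X_i\} = \{q_k\}\cup\{p_\alpha\}$ with $\{q_k\}\subset\qg$ and $\{p_\alpha\}\subset\pg$ each $g$-orthonormal, and the Cartan bracket relations force $[\qg,\qg]_\mg\subset\qg$, $[\qg,\pg]\subset\pg$, and $[\pg,\pg]_\mg\subset\qg$.

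First I would trace over $\pg$: the contributions from $[\qg,\pg]$-brackets appear in both halves of $M$ and cancel exactly, leaving
\[
\tr_\pg M = -\tfrac{1}{2}\sum_{\alpha,\beta}\|[p_\alpha,p_\beta]_\qg\|^2.
\]
Combined with the hypothesis $\tr_\pg\ric_g = c\dim\pg$ and with $\tr_\pg B > 0$ (from $B|_\pg>0$ for the Cartan decomposition of semisimple $\lgo$), this forces $c<0$ and yields the key identity
\[
\sum_{\alpha,\beta}\|[p_\alpha,p_\beta]_\qg\|^2 \;=\; -2c\dim\pg - \tr_\pg B.
\]
The parallel computation on $\qg$ gives
\[
\tr_\qg M = -\tfrac{1}{4}\sum_{k,j}\|[q_k,q_j]_\qg\|^2 - \tfrac{1}{2}\sum_{k,\alpha}\|[q_k,p_\alpha]\|^2 + \tfrac{1}{4}\sum_{\alpha,\beta}\|[p_\alpha,p_\beta]_\qg\|^2.
\]
Substituting the identity above and writing $\tr_\qg\ric_g = \tr_\qg M - \tfrac12 \tr_\qg B$ produces an explicit formula for $\tr_\qg\ric_g$. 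The contrapositive strategy is then: assuming $\qg\neq 0$ but $\ric_g(Z,Z)\leq 0$ for all $Z\in\qg$ forces $\tr_\qg\ric_g\leq 0$, which I aim to contradict.

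The main obstacle is this final positivity estimate: the positive contributions $-\tfrac12\tr_\qg B > 0$ (using $B|_\kg<0$) and $-\tfrac{c}{2}\dim\pg>0$ must strictly dominate $\tfrac14\tr_\pg B$ together with the negative bracket terms $\tfrac14\sum\|[q_k,q_j]_\qg\|^2$ and $\tfrac12\sum\|[q_k,p_\alpha]\|^2$. The natural tool is the Killing-form identity $B(Z,Z) = \tr(\ad Z)^2$ expanded in a Cartan-orthonormal basis of $\lgo$, together with the skew-symmetry of $\ad Z$ (for $Z\in\kg$) with respect to the Cartan inner product $-B(\cdot,\theta\cdot)$; this rewrites $-\tfrac12\tr_\qg B$ as a positive sum of Cartan-norms $\|[q_k,\cdot]\|_C^2$. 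Comparing these with the $g$-norm bracket sums appearing in $\tr_\qg M$ via the $\Ad(\Hh)$-isotypical decomposition of $\mg$, and exploiting the semisimple structure $\lgo=\oplus_i\lgo_i$ to reduce to individual simple ideals, should yield the required dominance and complete the proof.
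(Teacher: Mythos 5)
The paper offers no proof of this statement: it is quoted as a special case of the main theorem of \cite{Nkn2}, so what you are attempting is to reprove Nikonorov's result from scratch. Your computational skeleton is correct as far as it goes: with the Besse formula $\ric_g=M-\tfrac12 B$ for unimodular transitive groups, the bracket relations $[\qg,\qg]_\mg\subset\qg$, $[\qg,\pg]\subset\pg$, $[\pg,\pg]_\mg\subset\qg$, and awesomeness (needed so that the $g$-orthogonal projections onto $\pg$ and $\qg$ respect this splitting), one indeed gets $\tr_\pg M=-\tfrac12\sum\|[p_\alpha,p_\beta]_\qg\|^2$, hence $c<0$ and your key identity, and your expression for $\tr_\qg M$ is also correct (I checked it reproduces $\ric(Z,Z)=2a/b^2$ for $\Sl(2,\RR)$ with $\hg=0$ and metric $\mathrm{diag}(a,b,b)$ in the basis $E-F,\,H,\,E+F$).

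The genuine gap is the final step, which is the entire content of the theorem: you never prove the positivity of $\tr_\qg\ric_g$, you only assert that expanding $B(Z,Z)=\tr(\ad Z)^2$ in a Cartan-orthonormal basis and comparing with the $g$-norm bracket sums ``should yield the required dominance.'' Two concrete problems. First, the comparison you propose is between Cartan norms $\|[q_k,\cdot]\|^2_{B_\theta}$ and $g$-norms $\|[q_k,p_\alpha]\|^2_g$, but the hypothesis constrains only $\Ricci_g|_{\pg\times\pg}$, not $g|_\pg$ itself; on a $\pg$ with several isotypical summands $g|_\pg$ need not be comparable to $B_\theta|_\pg$ in either direction, so no term-by-term estimate of the sketched kind is available. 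Second, the inequality you need has no uniform slack, so any ``dominance'' argument with room to spare is doomed: already in the $\Sl(2,\RR)$ example above, the positive term $-\tfrac12\tr_\qg B=4/a$ and the negative term $\tfrac12\sum_\alpha\|[q,p_\alpha]\|^2_g=4/a$ blow up and cancel exactly as $a\to0$, while the total margin is $2a/b^2\to0$; the estimate must therefore be sharp and must use the Einstein condition on $\pg$ quantitatively, which your sketch does not do. Finally, note that reducing the theorem to ``$\qg\neq0\Rightarrow\tr_\qg\ric_g>0$'' is a strictly stronger statement than the cited conclusion (which only asks for one direction $Z\in\kg$ of positive Ricci), and you have not shown that this stronger trace inequality holds in general; Nikonorov's argument is more delicate than a trace over $\qg$, which is presumably why the authors cite \cite{Nkn2} rather than include a proof.
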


Using this, we are now in a position to prove the following result, which implies Theorem \ref{thm_main}:

\begin{theorem}\label{thm_Gsolv}
If $(\G/\Hh,g)$ has a Levi presentation and $\ricci_g=-g$, then $\G$ is solvable.
\end{theorem}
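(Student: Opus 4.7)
The plan is to argue by contradiction: assuming the Levi factor $\lgo \subset \ggo$ is nonzero, we apply Theorem \ref{thm_niko} to the $\Ll$-orbit $\Ll \cdot o \simeq \Ll/\Hh$ equipped with the induced metric $g_\Ll$, which is $\Ll$-invariant since $\Ll$ acts on $(\G/\Hh, g)$ by isometries.

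\textbf{Awesome decomposition.} Write $\lgo = \lgo_1 \oplus \cdots \oplus \lgo_r$ as a sum of simple ideals. Proposition \ref{prop_Rnhomognew} ensures each $(\lgo_i, \kg_i)$ is an irreducible Hermitian symmetric pair of non-compact type with $\hg = \kg_{ss} = \bigoplus_i \kg_i^{ss}$. Remark \ref{rem_euclhom} provides for each $i$ a Cartan decomposition $\lgo_i = \kg_i \oplus \pg_i$ with $\kg_i \cdot o \perp_g \pg_i \cdot o$. Setting $\kg := \bigoplus_i \kg_i$ and $\pg := \bigoplus_i \pg_i$ gives a Cartan decomposition $\lgo = \kg \oplus \pg$, and Proposition \ref{prop_simpleperp} ensures $\lgo_i \cdot o \perp_g \lgo_j \cdot o$ for $i \ne j$; hence $\kg \cdot o \perp_g \pg \cdot o$, so $g_\Ll$ is awesome. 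Moreover $\hg \subset \kg$ with $g_\Ll$-orthogonal complement $\qg := \zg(\kg)$ (by $\Ad$-invariance), giving the reductive decomposition $\mg_\Ll = \qg \oplus \pg$ of $\Ll/\Hh$.

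\textbf{Intrinsic Ricci on $\pg$.} Since $\Rr \triangleleft \G$ and $\Ll \cdot o \perp_g \Rr \cdot o$ by \eqref{eqn_orth}, one checks that the natural map $\pi: \G/\Hh \to \Ll/\Hh$, $\ell r \Hh \mapsto \ell \Hh$, is a Riemannian submersion whose fibers are the $\Rr$-orbits. Applying the O'Neill formulas together with the ambient Einstein condition $\ric_g = -g$, we express $\ric_{g_\Ll}|_{\pg \times \pg}$ as $-g_\Ll|_{\pg \times \pg}$ plus explicit corrections built from the O'Neill $A$-tensor and the second fundamental form of the fibers. Each $\pg_i$ is an $\Ad(\Hh)$-isotypical summand (Remark \ref{rem_euclhom}) and the corrections are $\Ad(\Hh)$-equivariant, so Schur's lemma forces $\ric_{g_\Ll}|_{\pg_i \times \pg_i} = c_i \cdot g_\Ll|_{\pg_i \times \pg_i}$; a bracket computation using $\lgo_i \cdot o \perp \lgo_j \cdot o$ for $i \ne j$ then matches the constants, yielding $\ric_{g_\Ll}|_{\pg \times \pg} = c \cdot g_\Ll|_{\pg \times \pg}$ for some $c \in \RR$.

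\textbf{Contradiction via Theorem \ref{thm_niko}.} Since each $\lgo_i$ is Hermitian, $\zg(\kg_i) \ne 0$, so $\qg = \zg(\kg) \ne 0$ and $\hg \subsetneq \kg$. Theorem \ref{thm_niko} therefore produces some $Z \in \kg$ (with nonzero $\qg$-component) satisfying $\ric_{g_\Ll}(Z, Z) > 0$. On the other hand, for $Z \in \qg$ the ambient condition gives $\ric_g(Z, Z) = -g(Z, Z) < 0$, and a parallel O'Neill analysis---controlled by $[\zg(\kg), \rg] \subset \rg$ together with the awesome orthogonality---will yield $\ric_{g_\Ll}(Z, Z) \le 0$, contradicting the positivity above. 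Hence $\lgo = 0$ and $\ggo = \rg$ is solvable. The main obstacle is the precise comparison between the intrinsic Ricci on $(\Ll/\Hh, g_\Ll)$ and the ambient Einstein Ricci on $(\G/\Hh, g)$: one must identify and sign the O'Neill correction terms using the bracket structure $[\lgo, \rg] \subset \rg$, Proposition \ref{prop_simpleperp}, and \eqref{eqn_orth}, both to derive constancy on $\pg$ via Schur and to control the sign on $\qg$ in the direction needed to reach the contradiction.
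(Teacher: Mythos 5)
Your skeleton matches the paper's: establish awesomeness of the induced metric on the Levi orbit via Proposition \ref{prop_simpleperp} and Remark \ref{rem_euclhom}, show the intrinsic Ricci is proportional to the metric on $\pg_1$, invoke Theorem \ref{thm_niko}, and contradict a sign on $\zg(\kg)$. But the two steps you defer to an ``O'Neill analysis'' are exactly where the real content lies, and as stated your sign claim runs the wrong way. For a Riemannian submersion, the O'Neill formulas give, in horizontal directions, $\ric_{\mathrm{base}} = \ric_{\mathrm{total}} + 2\Vert A\Vert^2 + \Vert T\Vert^2 - (\text{mean curvature term})$: the $A$- and $T$-contributions are \emph{non-negative}, so the ambient condition $\ric_g(Z,Z)=-g(Z,Z)$ yields a \emph{lower} bound for $\ric_{g_\Ll}(Z,Z)$, not the upper bound $\ric_{g_\Ll}(Z,Z)\le 0$ you need. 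The paper closes this gap with two nontrivial external inputs: the structure results of \cite{AL16}, which give the exact formula $\Ricci_{g_{\Ll/\Hh}} = -g_{\Ll/\Hh} + C_\theta$ with $C_\theta(X,X)=\tfrac14\tr\big(\theta(X)+\theta(X)^t\big)^2\ge 0$, $\theta(X)=(\ad X)|_\rg$; and Lauret's result (Prop.~A.1 in the appendix to \cite{semialglow}) that awesomeness forces $\theta(X)$ to be \emph{skew-symmetric} for $X\in\kg$, hence $C_\theta|_{\kg\times\kg}=0$ and $\Ricci_{g_1}|_{\kg_1\times\kg_1}=-g_1<0$. Your appeal to $[\zg(\kg),\rg]\subset\rg$ is vacuous ($\rg$ is an ideal) and provides no sign control; without the skew-symmetry of $\theta|_\kg$ the contradiction does not materialize.

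The Schur step also fails as written. For the hyperquadric factors $\sog(2,m)$, $\pg_1$ is a sum of two \emph{equivalent} irreducible $\Ad(\Hh_1)$-modules of real type, and for $\slg(2,\RR)$ the isotropy on that factor is trivial; in both cases Schur's lemma does not force an invariant symmetric bilinear form on $\pg_1$ to be a multiple of $g$. The paper instead uses that $C_\theta$ is an $\Ad(\Ll)$-invariant bilinear form on the semisimple algebra $\lgo$, hence a multiple of the Killing form on each simple ideal, which handles all cases uniformly and gives $\Ricci_{g_1}|_{\pg_1\times\pg_1}=a\cdot g_1$ as required by Theorem \ref{thm_niko}. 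So the proposal is a correct outline of the strategy, but the two pivotal steps are either unsupported or would fail in the stated form without importing the algebraic results the paper relies on.
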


\begin{proof}
 By Proposition \ref{prop_simpleperp} and Remark \ref{rem_euclhom}, the induced metric on $\Ll/\Hh$ is awesome. In the semisimple case $\G = \Ll$,  awesomeness  
implies the non-existence of Einstein metrics by Theorem \ref{thm_niko}. 
To deal with the general case we assume that $\lgo\neq 0$ and use 
the structure results \cite[Theorems 2.1 $\&$ 2.4]{AL16}
for non-compact  homogeneous Einstein spaces. These results yield that at $o:= e\Hh$, the induced homogeneous metric $g_{\Ll/\Hh}$ on $\Ll\cdot o \simeq \Ll/\Hh$ has
\[
	\Ricci_{g_{\Ll/\Hh}} = - g_{\Ll/\Hh} + C_\theta, \qquad C_\theta(X,Y) = \unc \tr \big(\theta(X) + \theta(X)^t \big) \big(\theta(Y) + \theta(Y)^t\big),
\]
where $\theta : \lgo \to \End(\rg)$ is the restriction of the adjoint representation: $\theta(X) = (\ad X)|_{\rg}$, and as usual $T_o \Ll/\Hh \simeq \mg$ for some reductive decomposition $\lgo = \hg \oplus \mg$. Now let $\Ll_1 \leq \Ll$ be the connected Lie subgroup with Lie algebra $\lgo_1$, one of the simple ideals in $\lgo$, and set $\Hh_1 := \Ll_1 \cap \Hh$. It follows by Proposition \ref{prop_simpleperp} that $(\Ll/\Hh, g_{\Ll/\Hh})$ is, at least locally, a Riemannian product of the homogeneous spaces given by the orbits of each of the simple factors. Thus, the induced homogeneous metric $g_1$ on $ \Ll_1 \cdot o \simeq \Ll_1/\Hh_1$ has Ricci curvature also given by
\begin{equation}\label{eqn_Ricg1}
	\Ricci_{g_1} = -g_1 + C_{\theta_1}, \qquad \theta_1 := \theta|_{\lgo_1} : \lgo_1 \to \End(\rg).
\end{equation}
By a result of Lauret (Proposition A.1 in the Appendix to \cite{semialglow}), awesomeness implies  
$\theta(X)^t = -\theta(X)$ for all $ X\in \kg$
and  $\theta(X)^t = \theta(X)$ for all $X\in \pg$,
where $\lgo = \kg \oplus \pg$ is the Cartan decomposition with $\kg \cdot o \perp \pg \cdot o$. In particular
$C_\theta|_{\kg \times \kg} = C_\theta|_{\kg \times \pg} = 0$
and
\[
	 \qquad C_\theta(X,Y) = \tr \big(\theta(X)\theta(Y) \big)
\]
for all $X,Y\in \pg$. Since  $C_\theta$ 
 is an $\Ad(\Ll)$-invariant bilinear form on $\lgo$, 
 its restriction to each simple factor is a multiple of the Killing form of that factor. Together with \eqref{eqn_Ricg1}, this yields 
\[
	\Ricci_{g_1}|_{\kg_1\times \kg_1}  = -g_1, \qquad \Ricci_{g_1}|_{\pg_1\times \pg_1} = a \cdot g_1, \qquad a\in \RR.
\]
The second identity allows us to apply Theorem \ref{thm_niko} to the homogeneous space $(\Ll_1/\Hh_1, g_1)$, and this contradicts the first identity (recall that $\kg_1$ is not contained in the isotropy, by the results of Section \ref{sec_homspaces}). Therefore, $\ggo = \rg$ as claimed.
\end{proof}

\begin{proof}[Proof of Theorem \ref{thm_main}]
By the observations made at the beginning of this section, given a homogeneous space $(\RR^n,g)$ with $\ricci_g = -g$, after passing to a quotient we may assume that it has a Levi presentation. Then by Theorem \ref{thm_Gsolv}, $(\RR^n,g)$ is a solvmanifold.
\end{proof}

Another immediate consequence of Theorem \ref{thm_Gsolv} is the following

\begin{corollary}\label{cor_nonexist}
The group $\Sl(2,\RR)^k$  admits no left-invariant Einstein metrics for any $k\geq 1$.
\end{corollary}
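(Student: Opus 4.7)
The plan is to pass to the universal cover and then combine Theorem~\ref{thm_main} with Jablonski's structure theorem for the isometry group of an Einstein solvmanifold. Suppose for contradiction that $g$ is a left-invariant Einstein metric on $\Sl(2,\RR)^k$ with Einstein constant $\lambda$. Since left-invariant metrics correspond to inner products on the Lie algebra, the same inner product determines a left-invariant Einstein metric $\tilde g$ on the universal cover $\tilde\G := \Sl_*(2,\RR)^k$, which is diffeomorphic to $\RR^{3k}$. As $\RR^{3k}$ is non-compact and $(\tilde\G,\tilde g)$ is complete, Myers' theorem rules out $\lambda > 0$, so $\lambda \leq 0$, and Theorem~\ref{thm_main} applies to give an isometry $(\RR^{3k},\tilde g) \simeq (S,g_S)$ with $(S,g_S)$ a simply-connected Einstein solvmanifold.

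The key further input is then Jablonski's result \cite{Jab15,Jbl2015} that an Einstein solvmanifold is strongly solvable, from which the identity component of the isometry group takes the form
\[
	\Iso(S,g_S)_0 \,=\, K \ltimes S,\qquad K\text{ compact}.
\]
Transporting the simply-transitive left-translation action of $\tilde\G$ through this isometry yields a closed embedding $\iota:\tilde\G\hookrightarrow K\ltimes S$ of connected Lie groups, and composing with the projection $p:K\ltimes S \to K$ produces a morphism $p\circ\iota : \tilde\G \to K$.

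The final step is to show that $p\circ\iota$ is trivial. Its image is a compact subgroup of $K$ whose Lie algebra is a quotient of $\slg(2,\RR)^k$; but every ideal of $\slg(2,\RR)^k$ is a sum of some of its simple factors, so every non-trivial quotient is isomorphic to $\slg(2,\RR)^{k'}$ with $1\leq k'\leq k$, and since $\slg(2,\RR)$ is of non-compact type no such quotient arises from a compact Lie group. Hence $p\circ\iota$ has trivial image, $\iota(\tilde\G)\subseteq S$, and this contradicts the fact that $S$ is solvable while $\tilde\G$ is non-trivially semisimple.

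I do not anticipate any real obstacle here: all the substantive geometric content is packaged into Theorem~\ref{thm_main} and the Jablonski structure result, and what remains is a short algebraic observation about semisimple subgroups of compact-by-solvable groups. The only point worth highlighting is the trivial-projection argument, which relies precisely on the simplicity and non-compact type of the factors $\slg(2,\RR)$; an analogous statement for compact simple factors would fail.
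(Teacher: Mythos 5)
Your argument has a genuine gap at its central step: the claim that an Einstein solvmanifold $(S,g_S)$ has $\Iso(S,g_S)_0=K\ltimes S$ with $K$ compact is false. Real hyperbolic space $\RR H^n$ is an Einstein solvmanifold (the group $\Aa\N$ from the Iwasawa decomposition of $\SO(n,1)$ with a suitable left-invariant metric), yet its identity isometry component is the simple group $\SO(n,1)_0$; the same happens for every irreducible symmetric space of non-compact type. What Jablonski's ``strongly solvable'' theorem actually gives --- and what the paper uses in the proof of Corollary \ref{cor_main1} --- is that \emph{every} transitive group of isometries of an Einstein solvmanifold contains a transitive solvable subgroup; it does not give a compact-by-solvable splitting of the isometry group, so your projection $p:K\ltimes S\to K$ does not exist and the argument that $\iota(\tilde\G)\subseteq S$ collapses. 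This is precisely the danger the paper flags right after the corollary: the statement is \emph{not} a consequence of Theorem \ref{thm_main} alone, because there do exist left-invariant metrics on $\Sl_*(2,\RR)$ that are isometric to solvmanifolds \cite{GJ15}, so being ``isometric to an Einstein solvmanifold'' by itself does not contradict left-invariance on a semisimple group.

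Your approach can be repaired, but by a different mechanism: since $\Sl_*(2,\RR)^k$ acts (simply) transitively by isometries and is closed in the isometry group (trivial, hence compact, isotropy), the correct form of strong solvability yields a \emph{transitive solvable subgroup of $\Sl_*(2,\RR)^k$ itself}; but every solvable subalgebra of $\slg(2,\RR)^k$ has dimension at most $2k<3k$ (project to each simple factor), a contradiction. You would also need to rule out $\lambda=0$ via \cite{AlkKml} together with Milnor's non-existence of flat left-invariant metrics on $\Sl(2,\RR)^k$, since Myers only excludes $\lambda>0$. The paper's own proof bypasses Theorem \ref{thm_main} entirely: the presentation $\Sl(2,\RR)^k/\{e\}$ is a Levi presentation (semisimple, trivial isotropy and radical, Iwasawa subgroups with orbit space $\SO(2)\simeq S^1$), the Einstein constant must be $-1$ after scaling by the flatness argument just mentioned, and then Theorem \ref{thm_Gsolv} forces the transitive group to be solvable --- impossible for a semisimple group. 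That route is both shorter and the one the intermediate results of the paper are designed for.
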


\begin{proof}
Since the group is semisimple and there is no isotropy, it is clear that the presentation is a Levi presentation. Moreover, by \cite{Mln} it is well-known that this group does not admit flat metrics.  Therefore, by non-compactness any left-invariant Einstein metric would satisfy $\ricci_g = -g$ up to scaling, contradicting Theorem \ref{thm_Gsolv}.
\end{proof}

Since the Einstein condition is local, the same statement is of course true for any Lie group locally isomorphic to $\Sl(2,\RR)^k$, such as its universal cover. It is interesting to remark that this result is in fact not a consequence of Theorem \ref{thm_main}, because there exist left-invariant metrics on $\Sl_*(2,\RR)$ which are isometric to a solvmanifold, see \cite{GJ15}. Also, let us point out that this also implies non-existence of left-invariant Ricci soliton metrics on these Lie groups, since by \cite[Thm.~1.3]{Jbl2015} they must be Einstein.

Finally, we prove another application of Theorem \ref{thm_main}, stated in the introduction:

\begin{proof}[Proof of Corollary \ref{cor_main1}]
Let $(\RR^n,g)$ be a homogeneous Ricci soliton with $\ricci_g + 
 \lca_X g= \lambda \cdot g $. For $\lambda \geq 0$, such solitons are 
 by \cite{Nab10}, \cite{PW} quotients of the Riemannian product of a compact homogeneous Einstein manifold and a flat Euclidean space. Since our underlying manifold is $\RR^n$,
  we may therefore assume $\lambda<0$: see \cite{Mln}. 
 By \cite{Jbl13b} there exists a presentation $(\G/\Hh,g)$ which makes $(\RR^n,g)$ an algebraic soliton: the Ricci endomorphism at $e\Hh$ is given by $\Ricci_g = \lambda \cdot \Id + {\rm D}$, with ${\rm D}$ the projection onto $\ggo/\hg \simeq T_{e\Hh} \G/\Hh$ of a derivation of $\ggo$. Notice that ${\rm D}$ is symmetric, and in particular a normal operator. Using Theorem 3.2 from \cite{HePtrWyl} we obtain a one-dimensional 
 homogeneous Einstein extension $(\RR^{n+1} = \hat \G / \Hh, \hat g)$
 with $\ricci_{\hat g} = \lambda \cdot \hat g$ (with  pairwise isometric
 $\G$-orbits, because  $\G$ is normal in $\hat \G$).
 
 Theorem \ref{thm_main} and Theorem 1.3 in \cite{Jab15} 
  imply now that 
 $(\RR^{n+1}, \hat g)$ is an Einstein solvmanifold, which
 is strongly solvable. As a consequence, by the very definition of strongly solvable, there exists a solvable subgroup $\hat \Ss$ of $\hat \G$ which acts transitively on $(\RR^{n+1}, \hat g)$. Applying 
 well-known properties of solvmanifolds 
 we may also assume that $\dim \hat \Ss = n+1$, see \cite{GrdWls}. 
 
 Let $\hat \ggo,\ggo,\hat\sg$ denote the Lie algebras of Killing fields
 of $(\RR^{n+1},\hat g)$ corresponding to the isometric group actions of $\hat \G, \G, \hat \Ss$, respectively, and set $\sg := \hat \sg \cap \ggo$. Since $\ggo$ has codimension one inside $\hat \ggo$, $\sg$ has codimension one inside $\hat \sg$. Moreover, $\sg$ spans the tangent space to the $\G$-orbits in $\hat \G/\Hh$, since otherwiser $\hat \Ss$ would not be transitive in $(\RR^{n+1}, \hat g)$. The corresponding connected Lie subgroup $\Ss$ of $\G$ acts transitively on $\G/\Hh$, thus $(\G/\Hh,g)$ is a solvmanifold and finally by \cite[Thm.~1.1]{Jbl2015} it is isometric to a solvsoliton, as claimed.  
\end{proof}

\begin{appendix}

\section{The space of invariant metrics}\label{app_homogeneous}

Let $\G/ \Hh$ be a homogeneous space. Fix a background $\G$-invariant Riemannian metric $\bar g$ and consider the canonical reductive decomposition $\ggo = \hg \oplus \mg$, where $\mg$ is the orthogonal complement of $\hg$ with respect to the Killing form of $\ggo$. The space $\mca^\G(\G/\Hh)$ of $\G$-invariant Riemannian metrics on $\G/ \Hh$ can be identified with the set of $\Ad(\Hh)-$invariant inner products on $\mg$. The latter is an orbit 
\[
	\GHm \cdot \bar g \subset \Sym^2(\mg),
\] 
where $\bar g$ denotes also the inner product induced  on $\mg \simeq T_{e\Hh} \G / \Hh$ by the background metric.
Here $\Sym_2(\mg)$ denotes the space of symmetric bilinear forms on $\mg$, $\Gl(\mg)$ acts on it by the usual change of basis action
\begin{equation}\label{eqn_Sym2action}
	(Q\cdot b)(\,\,\cdot \,,\,\cdot \,):=b(Q^{-1}\cdot \,,Q^{-1}\cdot \,), \qquad Q\in \Gm, \quad b\in \Sym^2(\mg).
\end{equation}
and $\GHm$ is the centralizer of $\Ad(\Hh)|_{\mg}$ in $\Gl(\mg)$. Thus, if $\Om \subset \Gm$ denotes the subgroup of $\bar g$-orthogonal endomorphisms, and $\OHm = \Om \cap \GHm$,  then the set of $\Ad(\Hh)-$invariant inner products on $\mg$ is also a homogeneous space
\[
	\mca^\G(\G/ \Hh) \simeq \GHm / \OHm.	
\]
Moreover, if $\Q \subset \GHm$ is a closed subgroup big enough so that $\GHm = \Q \OHm$ then it still acts transitively on $\GHm / \OHm$. In this way we also get a presentation
\[
	\mca^\G(\G/\Hh) \simeq \Q / \K_\Q, 
\]
where $\K_\Q =  \Q \cap \OHm$ is a maximal compact subgroup of $\Q$. 

\begin{lemma}\label{lem_lift}
Let $(g_t)_{t\in I} \subset \mca^\G(\G/\Hh) \simeq \Q / \K_\Q$ be a smooth family of metrics.
Then, there exists a smooth lift $(q_t)_{t\in I} \subset \Q$ 
such that $g_t = q_t \cdot \bar g$ for all $t\in I$.
\end{lemma}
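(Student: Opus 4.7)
The map $\pi:\Q \to \Q/\K_\Q$, $q \mapsto q\cdot \bar g$, is a smooth principal $\K_\Q$-bundle: indeed, $\K_\Q$ is a closed (in fact compact) subgroup of $\Q$, so the quotient carries a canonical smooth structure making $\pi$ into a submersion, and local triviality follows from the standard slice construction using a $\K_\Q$-invariant complement to $\mathfrak{k}_\Q = \Lie(\K_\Q)$ inside $\mathfrak{q}=\Lie(\Q)$. The lemma is then an instance of the general fact that smooth curves in the base of a principal bundle lift smoothly to the total space.

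My plan is as follows. First, since $\K_\Q$ is compact, averaging over $\K_\Q$ produces an $\Ad(\K_\Q)$-invariant decomposition $\mathfrak{q} = \mathfrak{k}_\Q \oplus \mathfrak{p}$. Left-translating $\mathfrak{p}$ around $\Q$ yields a principal connection on $\pi:\Q \to \Q/\K_\Q$, with horizontal distribution $\mathcal{H}_q := dL_q(\mathfrak{p})$ at each $q\in \Q$. Second, I would choose any point $q_{t_0}\in \pi^{-1}(g_{t_0})$ for some fixed $t_0\in I$ and form the horizontal lift $(q_t)_{t\in I}$ of the curve $(g_t)_{t\in I}$ starting at $q_{t_0}$. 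Locally on a trivializing neighborhood $\pi^{-1}(U) \simeq U\times \K_\Q$, the horizontal lifting equation becomes a smooth ODE of the form $\dot q_t = H_{q_t}(\dot g_t)$ where $H$ is the smooth horizontal lifting map; standard ODE theory gives a unique smooth local solution through $q_{t_0}$.

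Third, I must verify that the lift extends to all of $I$. Since $\pi$ is a principal $\K_\Q$-bundle and the horizontal vector fields are $\K_\Q$-equivariant, a local horizontal lift over a subinterval $J\subset I$ can fail to extend beyond $J$ only if it leaves every compact set in $\Q$; but the projection is prescribed by $g_t$, which lies in a compact subset of $\Q/\K_\Q$ over any compact $[a,b]\subset I$, so the lift stays in $\pi^{-1}$ of this compact set, which is a fiber bundle over a compact base with compact fibre $\K_\Q$ and hence compact itself. Thus $(q_t)$ extends smoothly to all of $I$, and by construction satisfies $\pi(q_t) = g_t$, i.e., $g_t = q_t\cdot \bar g$.

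There is no real obstacle to overcome; the argument is standard bundle theory. The only subtlety worth flagging is the completeness argument ensuring that the horizontal lift exists on the whole interval $I$ rather than just locally, which is handled by the compactness of $\K_\Q$ as above. Alternatively, one could argue more abstractly that the pullback principal $\K_\Q$-bundle on the contractible base $I$ is trivial, hence admits a global smooth section, yielding the required lift without any ODE argument; I would include this as a one-line remark.
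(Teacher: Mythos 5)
Your proposal is correct. Note that the paper states Lemma \ref{lem_lift} without proof, treating it as a standard fact about the homogeneous presentation $\mca^\G(\G/\Hh)\simeq \Q/\K_\Q$; your argument is precisely the standard justification one would supply. The two ingredients you use are both sound: $\K_\Q=\Q\cap\OHm$ is compact, hence closed, so $\pi:\Q\to\Q/\K_\Q$ is a principal $\K_\Q$-bundle, and averaging gives an $\Ad(\K_\Q)$-invariant complement $\pg$ whose left translates form a principal connection (the required right-equivariance $\mathcal{H}_{qk}=dR_k\mathcal{H}_q$ is exactly the $\Ad(\K_\Q)$-invariance of $\pg$). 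One small remark: the completeness step you flag as the "only subtlety" is even more automatic than your compactness argument suggests, since horizontal lifts of curves in principal bundles exist over the whole parameter interval for \emph{any} structure group (in a local trivialization the lifting ODE is right-invariant on the fibre group and cannot escape to infinity in finite time); your alternative argument via triviality of the pullback bundle over the contractible interval $I$ is equally valid and arguably the cleanest route.
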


The reductive decomposition induces an inclusion
\begin{equation}\label{eqn_Gminclusion}
	\Gm  \simeq \minimatrix{\Id_{\hg}}{}{}{\Gl(\mg)} \subset \Gg\,, 
\end{equation} 
according to which we set
\[
	\Aut^{\Hh}_{\mg}(\ggo) := \Aut(\ggo) \cap \GHm\, ,
\]
where $\Aut(\ggo)$ denotes the Lie group of automorphisms of the Lie algebra $\ggo$.


\begin{lemma}\label{lem_Adf}
Let $g$ be a $\G$-invariant metric on $\G/ \Hh$, set $o := e\Hh$ and let $f\in \G$ normalizing $\Hh$. Then, the scalar products $g_o$, $g_{f\cdot o}$ induced respectively on $\mg$  by $g$ after the identifications $\mg \simeq T_o \G/\Hh$, $\mg \simeq T_{f\cdot o} \G/ \Hh$
satisfy
\[
	g_{f \cdot o}(\, \cdot \, ,  \, \cdot \,) = g_o \big( {(\Ad f^{-1})|_\mg  \, \cdot \, , (\Ad f^{-1})|_\mg \cdot \, }\big),
\]
with $(\Ad f^{-1})|_\mg \in \Aut^{\Hh}_{\mg}(\ggo) $.
\end{lemma}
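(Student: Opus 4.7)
The plan is to reduce the identity to a direct flow computation, using the $\G$-invariance of $g$ to move the basepoint from $f\cdot o$ back to $o$.

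First I would set up the identifications used in the statement. At any $p\in\G/\Hh$ the map $\ggo\to T_p(\G/\Hh)$ sending $X$ to the action field $X^*_p:=\tfrac{d}{dt}\big|_{0}\exp(tX)\cdot p$ has kernel equal to the isotropy subalgebra at $p$. At $p=o$ this kernel is $\hg$, while at $p=f\cdot o$ it is $\Ad(f)\hg$. Since $f$ normalises $\Hh$, the two kernels coincide, so restriction to $\mg$ yields a linear isomorphism $\mg \simeq T_{f\cdot o}(\G/\Hh)$, matching the identification implicit in $g_{f\cdot o}$.

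Next, since $g$ is $\G$-invariant, left translation $L_{f^{-1}}$ is an isometry of $(\G/\Hh,g)$ carrying $f\cdot o$ to $o$. I would write
\[
g_{f\cdot o}(X,Y) \;=\; g\bigl(X^*_{f\cdot o},Y^*_{f\cdot o}\bigr) \;=\; g_o\bigl((dL_{f^{-1}})X^*_{f\cdot o},(dL_{f^{-1}})Y^*_{f\cdot o}\bigr),
\]
and then compute the pushforward directly from the definition:
\[
(dL_{f^{-1}})X^*_{f\cdot o} \;=\; \tfrac{d}{dt}\Big|_{0} f^{-1}\exp(tX)f\cdot o \;=\; \tfrac{d}{dt}\Big|_{0}\exp\bigl(t\,\Ad(f^{-1})X\bigr)\cdot o \;=\; \bigl(\Ad(f^{-1})X\bigr)^*_o.
\]
To close the loop I need $\Ad(f^{-1})X\in\mg$ whenever $X\in\mg$: since $\Ad(f^{-1})\in\Aut(\ggo)$ preserves the Killing form $\kf_\ggo$ and, by the normalising hypothesis, preserves $\hg$, it preserves $\mg=\hg^{\perp_{\kf_\ggo}}$ as well. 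Substituting gives the claimed formula $g_{f\cdot o}(X,Y)=g_o\bigl((\Ad f^{-1})|_\mg X,(\Ad f^{-1})|_\mg Y\bigr)$.

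For the last assertion $(\Ad f^{-1})|_\mg\in\Aut_\mg^\Hh(\ggo)$, I would note that $\Ad(f^{-1})$ is a genuine Lie algebra automorphism of $\ggo$ preserving the reductive splitting $\ggo=\hg\oplus\mg$, and that, since $\Hh$ is connected and $f$ normalises $\Hh$, conjugation by $\Ad(f^{-1})|_\mg$ stabilises the subgroup $\Ad(\Hh)|_\mg\subset\Gl(\mg)$, intertwining the isotropy representation with itself via the inner automorphism $h\mapsto f^{-1}hf$ of $\Hh$. I do not anticipate any serious obstacle here: the argument is essentially a one-line flow identity together with $\Ad$-invariance of the Killing form. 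The only point requiring care is the book-keeping between the two tangent-space identifications, which is handled by the kernel computation in the first step.
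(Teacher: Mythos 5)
Your proof is correct and follows essentially the same route as the paper's: the paper pushes forward from $o$ to $f\cdot o$ via $df$ using the Killing-field identity $(df)_o X_o = ((\Ad f)X)_{f\cdot o}$, while you pull back from $f\cdot o$ to $o$ via $dL_{f^{-1}}$ and derive the same identity directly from $f^{-1}\exp(tX)f=\exp(t\,\Ad(f^{-1})X)$. The bookkeeping of the two tangent-space identifications and the verification that $\Ad(f^{-1})$ preserves $\mg$ are both handled correctly.
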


\begin{proof}
Since the map $f$ is an isometry, for a Killing field $X\in \mg$ with
flow $\Phi_s$ the vector field $\tilde X$ given by 
$\tilde X_{f\cdot p} := df_p X_p$ is again a Killing
field, since its flow satisfies $\tilde \Phi_s = f \circ \Phi_s \circ f^{-1}$.
As a consequence $\tilde X = (\Ad f)\vert_{\mg} X \in \mg$. 
Setting $p:= f\cdot o$, we deduce
\begin{equation}\label{eqn_Killc}
   (d f)_o X_o = \big((\Ad f) X \big)_{p}\,. 
\end{equation}
Using that $f$ normalizes $\Hh$ we see that the isotropy subgroup at $p$ is also $\Hh$. In particular, we also have $\mg \simeq T_p \G/ \Hh$ under the usual identification $X \mapsto X_p$.
Hence, for $X\in \mg$ we have
\begin{align*}
	 g_o (X_o, X_o)& =  g_{p}\big(  (df)_o X_o, (df)_o X_o \big)\\
		& = g_{p} \big( (\Ad f) X_{p}, (\Ad f) X_{p}  \big) 
		= g_p \big( (\Ad f)|_\mg X_p, (\Ad f)|_\mg X_p \big),
\end{align*}
and the lemma follows.
\end{proof}

An analogous of the above lemma holds of course for any $\G$-invariant tensor.

\begin{corollary}\label{cor_shape}
Let $(g(t))_{t \in \RR}$ be a smooth curve of homogeneous metrics on
$\G/\Hh$ and $L_t$ be defined by 
$g_t' ( \,\cdot \, ,\, \cdot )
	=2\, g_t(L_t \,\cdot \, ,\, \cdot )$.
Suppose that there exists $f \in \G$ normalizing $\Hh$
and $T>0$ such that $g_{T+t}= (\Ad f)|_{\mgo} \cdot g_t$ for all
$t \in \RR$. Then
 \begin{eqnarray*}
     L_T &=& (\Ad f)|_{\mgo}  \circ  L_0 \circ (\Ad f)|_{\mgo}^{-1}\,.
 \end{eqnarray*}
\end{corollary}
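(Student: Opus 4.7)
\medskip

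\noindent\textbf{Proof plan for Corollary \ref{cor_shape}.} Set $A := (\Ad f)|_{\mgo}$, so the hypothesis reads $g_{T+t} = A \cdot g_t$ in $\Sym^2(\mgo)$. By the very definition of the $\Gl(\mgo)$-action \eqref{eqn_Sym2action}, this unfolds to the scalar identity
\[
    g_{T+t}(X,Y) \;=\; g_t\bigl(A^{-1}X,\,A^{-1}Y\bigr), \qquad X,Y\in\mgo,\; t\in\RR.
\]
The plan is then to differentiate this identity in $t$ and compare with the defining equation $g_s'(\cdot,\cdot) = 2\,g_s(L_s\cdot,\cdot)$ of the shape operator at the two instants $s=0$ and $s=T$.

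Differentiating the displayed equation in $t$ and evaluating at $t=0$ gives
\[
    g_T'(X,Y) \;=\; g_0'(A^{-1}X,\,A^{-1}Y) \;=\; 2\,g_0\bigl(L_0 A^{-1}X,\,A^{-1}Y\bigr),
\]
where I used the formula for $g_0'$ in the last step. On the other hand, applying the same formula at $t=T$ and then substituting the relation $g_T = A\cdot g_0$ yields
\[
    g_T'(X,Y) \;=\; 2\,g_T(L_T X,Y) \;=\; 2\,g_0\bigl(A^{-1}L_T X,\,A^{-1}Y\bigr).
\]

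Combining the two expressions and setting $X' := A^{-1}X$, $Y' := A^{-1}Y$ gives $g_0(A^{-1}L_T A X',Y') = g_0(L_0 X',Y')$ for all $X',Y'\in\mgo$. Non-degeneracy of $g_0$ yields $A^{-1}L_T A = L_0$, i.e.\ $L_T = A L_0 A^{-1}$, as claimed. The whole argument is thus a short unwinding of definitions; the only nontrivial ingredient is the identification of $g_{T+t}$ with the pull-back by $A$, which is exactly the content of Lemma \ref{lem_Adf} applied to the isometry $f$. There is no serious obstacle, so the write-up is a few lines.
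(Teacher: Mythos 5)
Your argument is correct and is essentially the paper's own proof: both unwind the relation $g_{T+t}=(\Ad f)|_{\mgo}\cdot g_t$, differentiate in $t$, and compare with the defining equation $g_s'=2\,g_s(L_s\cdot,\cdot)$ at $s=0$ and $s=T$. The paper merely phrases the same computation through the endomorphisms $G_t$ defined by $g_t=\bar g(G_t\cdot,\cdot)$ relative to a background metric, whereas you work directly with the bilinear forms; the content is identical.
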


\begin{proof}
We write $g_t (\cdot \,,\,\cdot \,)= \bar g(G_t\,\cdot \,,\,\cdot \,)$,
where $\bar g$ is a $\Go$-invariant background metric on $\Sigma_0$.
Then by Lemma \ref{lem_Adf} we have for all $t \in \RR$ that
$G_{T+t}=((\Ad f)|_{\mgo}^{-1})^T G_t (\Ad f)|_{\mgo}^{-1}$, the transpose
taken with respect to $\bar g$. Using $G_t'=2G_tL_t$ 
implies the claim.
\end{proof}

\begin{lemma}\label{lem_Sfactor}
Semisimple subgroups of a semisimple Lie group are closed.
\end{lemma}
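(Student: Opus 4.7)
The plan is to reduce the statement to a closedness result in $\Gl(\ggo)$ via the adjoint representation, and then invoke the classical fact that connected semisimple subgroups of real algebraic groups are algebraic, hence closed. Throughout I write $\Hh\leq\G$ for the connected semisimple Lie subgroup in question.

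First, since $\G$ is semisimple its center $Z(\G)$ is discrete, so the adjoint representation $\Ad:\G\to\Gl(\ggo)$ has discrete kernel, and its image coincides with the identity component of the real algebraic group $\Aut(\ggo)$, hence is closed in $\Gl(\ggo)$. It is therefore enough to show that $\Ad(\Hh)\leq\Gl(\ggo)$ is closed: if so, then $\Hh\cdot Z(\G)=\Ad^{-1}(\Ad(\Hh))$ is closed in $\G$, and since $Z(\G)$ is discrete and $\Hh$ is connected, $\Hh$ is the identity component of that closed set and hence itself closed.

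Second, in the linear setting, let $\Ss\leq\Gl(n,\RR)$ be a connected semisimple Lie subgroup and let $\Ss^{\mathrm{Zar}}$ be its Zariski closure, a real algebraic subgroup whose Lie algebra contains $\Lie(\Ss)=\ggs$. The key classical fact I would invoke is that a semisimple Lie subalgebra of $\glg(n,\RR)$ is algebraic in Chevalley's sense: each $X\in\ggs$ has its Jordan components inside $\ggs$, and $\ggs$ is already the Lie algebra of its algebraic hull. Consequently $\Lie(\Ss^{\mathrm{Zar}})=\ggs$, so $\Ss$ and the identity component of $\Ss^{\mathrm{Zar}}$ share the same Lie algebra; by connectedness they coincide, proving that $\Ss$ is closed.

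The main obstacle is the algebraicity input $\Lie(\Ss^{\mathrm{Zar}})=\ggs$, which is nontrivial and is usually invoked as a theorem of Chevalley (see e.g.\ Hilgert--Neeb or Borel). An entirely self-contained alternative would proceed via Mostow's theorem on the existence of a Cartan involution of $\Gl(n,\RR)$ leaving $\Ss$ stable, yielding a global decomposition $\Ss=\K_\Ss\cdot\exp(\pg_\Ss)$ and hence closedness directly from the polar decomposition of $\Gl(n,\RR)$.
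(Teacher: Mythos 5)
Your proof is correct, but it follows a genuinely different route from the paper's. The paper does not treat an arbitrary semisimple subgroup: its proof only handles the case actually needed in the text, namely the connected subgroup $\Ll_1$ corresponding to one semisimple ideal in a splitting $\lgo=\lgo_1\oplus\lgo_2$. There the argument is elementary and topological: the universal cover splits as $\tilde\Ll_1\times\tilde\Ll_2$ and the centerless quotient as $\check\Ll_1\times\check\Ll_2$; if $\Ll_1$ were not closed, its closure would meet $\Ll_2$ in positive dimension, yet this intersection projects to $\check\Ll_1\cap\check\Ll_2=\{e\}$ and so lies in the discrete center $Z(\Ll)$ --- a contradiction. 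Your argument instead reduces to the linear case via $\Ad:\G\to\Gl(\ggo)$ (discrete kernel $Z(\G)$ by semisimplicity, so $\Ad^{-1}(\Ad(\Hh))=\Hh\, Z(\G)$ and $\Hh$ is its identity component) and then invokes Chevalley's theorem that a semisimple subalgebra of $\glg(n,\RR)$ equals its algebraic hull, so that $\Ad(\Hh)$ is the Euclidean identity component of its Zariski closure, hence closed. What each approach buys: the paper's proof is self-contained and uses only the direct-product structure, but does not literally prove the lemma as stated for general semisimple subgroups; yours proves the full statement (any connected semisimple $\Hh\leq\G$), at the cost of importing a nontrivial classical black box (Chevalley's algebraicity, or alternatively Mostow's compatible Cartan involution). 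Both are valid; just be aware that the nontrivial input in your version is exactly the algebraicity claim $\Lie(\Ss^{\mathrm{Zar}})=\ggs$, which does require the derived-algebra argument (the derived algebra of the algebraic hull is algebraic and equals $[\ggs,\ggs]=\ggs$) and is not something one should wave through without citation.
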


\begin{proof}
Let $\Ll$ be a Lie group whose Lie algebra splits as a direct sum of semisimple ideals $\lgo = \lgo_1 \oplus \lgo_2$. Let $\Z(\Ll)$ be the center of $\Ll$ and consider the universal cover $\tilde \Ll$ and the centerless quotient $\check \Ll := \Ll / \Z(\Ll)$. Let $\Ll_i$ (resp. $\tilde \Ll_i, \check \Ll_i$) be the connected Lie subgroup of $\Ll$ (resp. $\tilde \Ll, \check \Ll$) with Lie algebra $\lgo_i$, $i=1,2$. 

It is enough to show that $\Ll_1$ is a closed subgroup of $\Ll$. For the universal cover this is clear,  as $\tilde \Ll$ is isomorphic to the direct product  $\tilde \Ll_1 \times \tilde \Ll_2$. Notice that $\check \Ll \simeq \tilde \Ll / \Z(\tilde \Ll)$, thus $\check \Ll_i \simeq \tilde \Ll_i / \Z(\tilde \Ll_1)$ and $\check\Ll$ is  isomorphic to the direct product of the centerless subgroups $\check \Ll_1 \times \check \Ll_2$. Assume on the contrary that $\Ll_1$ is not closed in $\Ll$. This means that its closure $\bar \Ll_1$ must have strictly larger dimension, and hence its intersection with $\Ll_2$ must have positive dimension. Let $\pi : \Ll \to \check \Ll$ denote the projection onto the quotient. Then $\pi(\Ll_i)$ is connected and with Lie algebra $\lgo_i$, hence equal to $\check \Ll_i$. On the other hand,
\[
	\pi(\bar \Ll_1 \cap \Ll_2) \subset \pi(\bar \Ll_1) \cap \pi(\Ll_2) \subset \overline{\pi(\Ll_1)}\cap \pi(\Ll_2) = \check \Ll_1 \cap \check \Ll_2 = \{e\}.
\]
From this, $\bar \Ll_1 \cap \Ll_2 \subset \Z(\Ll)$, and this is a contradiction since $\Z(\Ll)$ is discrete.
\end{proof}

\section{The endomorphism $\Beta$ associated to a homogeneous space}\label{app_beta}

Let $\G/\Hh$ be a homogeneous space endowed with a fixed, background $\G$-invariant Riemannian metric $\bar g$, and assume that $\Hh$ is  compact. Let $\ggo = \hg \oplus \mg$ be the canonical reductive decomposition, and extend $\bar g$ from $\mg$ to an $\Ad \Hh$-invariant scalar product on all of $\ggo$, also denoted by $\bar g$, and such that $\bar g(\hg,\mg) = 0$. In this section, the notions of symmetric and orthogonal endomorphisms of $\ggo$ are considered with respect to $\bar g$ only.

Given \emph{any} symmetric endomorphism $\Beta \in \End(\ggo)$ we now introduce the associated subgroups $\Qb, \Slb$ of $\Gl(\ggo)$ following \cite{GIT}, and we refer the reader to that article and \cite{BL17} for proofs of the results in this section.
Consider the adjoint map 
$\ad(\Beta) : \End(\ggo) \to \End(\ggo)$, $A \mapsto [\Beta, A],$ which is symmetric with respect to the scalar product $\tr AB^t$ on $\End(\ggo)$.  If $\End(\ggo)_r$ denotes the eigenspace of $\ad(\Beta)$ with eigenvalue $r\in \RR$ then $\End(\ggo) = \bigoplus_{r\in \RR} \End(\ggo)_r$, and we set
\[
    \ggo_\Beta := \End(\ggo)_0 = \ker (\ad(\Beta) ), \qquad \ug_\Beta := \bigoplus_{r> 0} \End(\ggo)_r, \qquad \qg_\Beta := \ggo_\Beta \oplus \ug_\Beta.
\]

\begin{definition}\label{def_groupsapp}
We denote respectively by 
\[
    \Gb := \{ g \in \G : g \Beta g^{-1} = \Beta \}, \quad \Ub := \exp(\ug_\Beta)\quad \textrm{and}
     \quad \Qb := \Gb \Ub
\] 
the centralizer, the unipotent subgroup and the parabolic subgroup associated with $\Beta$. 
\end{definition}

Of course their Lie algebras are respectively $\ggo_\Beta$, $\ug_\Beta$ and $\qg_\Beta$. The group $\Ub$ is nilpotent, and $\Gb$ is reductive with Cartan decomposition $\Gb = \Kb \exp(\pg_\Beta)$, where $\Kb = \Or(\ggo) \cap \Gb$ and $\pg_\Beta = \{A\in \ggo_\Beta : A= A^t \}$. Notice that $\Beta \in \pg_\Beta$, and that $\hg_\Beta := \{A\in \pg_\Beta : \tr A \Beta = 0 \}$ is a codimension-one Lie subalgebra in $\pg_\Beta$. We set
\[
	\Hb := \Kb \exp(\hg_\Beta), \qquad \Slb := \Hb \Ub, \qquad \Lie(\Slb):= \slg_\Beta.
\]
At Lie algebra level we have an orthogonal decomposition $\qgb = \RR \Beta \oplus \slgb$.

One of the most important properties $\Qb$ has is that 
\[
	\Gl(\ggo) = \Or(\ggo) \Qb.
\]
If in addition $\Beta$ preserves the decomposition $\ggo = \hg \oplus \mg$, then by \cite[(46)]{BL17} and Appendix \ref{app_homogeneous} the subgroup 
\begin{eqnarray}\label{eqn_QbHm}
  	\Q^\Hh_\Beta := \Qb \cap \GHm  \, \subset \, \Gm
\end{eqnarray}
acts transitively on the space of $\Ad(\Hh)$-invariant scalar products on $\mg$. The latter is of course in one-to-one correspondence with the space $\mca^\G(\G/\Hh)$ of $\G$-invariant metrics. Thus,
\begin{equation}\label{eqn_QbHtrans}
	\mca^\G(\G/\Hh) = \Qb^\Hh \cdot \bar g.
\end{equation}

\begin{theorem}\cite{GIT,BL17}\label{thm_beta}
Given the homogeneous space $\G/\Hh$, there exists a $\bar g$-symmetric endomorphism $\Beta \in \End(\ggo)$ associated to it, normalized so that $\tr \Beta = -1$, and satisfying:
\begin{enumerate}[1.]
	\item \emph{(Positivity)} The endomorphism $\Beta^+ := \Beta / \Vert \Beta \Vert^2 + \Id_\ggo$ is positive semi-definite, and its kernel is the orthogonal complement of the nilradical of $\ggo$. In particular, $\Beta^+$ preserves $\mg$. 
	\item \emph{(Automorphism group constraint)} We have that $\Aut(\ggo) \leq \Slb$.
	\item \emph{(Ricci curvature estimate)} For any $\G$-invariant metric $g = q \cdot \bar g$ on $\G/\Hh$, $q\in \Qb^\Hh$, its Ricci endomorphism satisfies
	\[
		\tr \Ricci_g q \Beta^+ q^{-1}  + g(\mcv_g, \mcv_g) \geq 0,
	\] 
	with equality if and only if $q \Beta^+ q^{-1} \in \Der(\ggo)$. Here $\mcv_g \in \mg$ denotes the mean curvature vector of $(\G/\Hh, g)$, and by $\Beta^+$ we mean  $\Beta^+|_\mg$, see condition 1.
\end{enumerate}
\end{theorem}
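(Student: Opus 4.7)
The plan is to deduce Theorem \ref{thm_beta} from the real GIT machinery developed in \cite{GIT,BL17} applied to the representation $V=\lamg$ of $\Gg$, whose points are skew bilinear brackets on $\ggo$. Viewing the Lie bracket $\mu=[\,\cdot\,,\,\cdot\,]$ of $\ggo$ as a point of $V$, one defines $\Beta$ as the label of the Kirwan--Ness stratum containing $\mu$: concretely, up to $\Og$-conjugation, $-\Beta/\Vert\Beta\Vert^2$ is the unique minimum-norm element of the closure of the moment-map image $\mm(\Gg\cdot\mu)$, where $\mm:V\setminus\{0\}\to\Sym(\ggo)$ denotes the moment map of the $\Gg$-action with respect to $\bar g$. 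The normalisation $\tr\Beta=-1$ then fixes the scale.

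For item (1), the spectral properties of $\Beta$ follow from the structure of the stratification: by construction $\mu$ lies in the sum of eigenspaces of $\ad\Beta$ acting on $V$ for eigenvalues $\geq\Vert\Beta\Vert^2$, which translates into a lower bound on the eigenvalues of $\Beta$ acting on $\ggo$ itself and therefore $\Beta^+\geq 0$. The identification $\ker\Beta^+=\ngo^\perp$ uses the characterisation of the nilradical as the maximally unstable subspace, i.e.\ the sum of eigenspaces of $\Beta$ with strictly negative eigenvalues; here Engel's theorem is what relates nilpotency of an ideal to negative $\Beta$-weights. For item (2), any $\phi\in\Aut(\ggo)$ fixes $\mu$ and hence preserves its stratum; since the subgroup of $\Gg$ that stabilises a stratum and preserves the normalisation $\tr\Beta=-1$ is precisely $\Slb$, we obtain $\Aut(\ggo)\subseteq\Slb$.

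The crucial item (3) rests on Lauret's formula expressing $\Ricci_g$ of a homogeneous metric $g=q\cdot\bar g$ in terms of the moment map evaluated at the twisted bracket $\mu_g=q^{-1}\cdot\mu$, together with a correction involving the mean curvature vector $\mcv_g$. After substituting and tracing against $q\Beta^+q^{-1}$, the stated inequality reduces to the moment-map bound that $\langle\mm(q^{-1}\mu),\Beta^+\rangle$ is non-negative modulo the $\mcv_g$-term. This bound is then a direct consequence of the Ness--Kirwan minimum property of $\Beta$ along the $\Qb$-orbit of $\mu$: since $q\in\Qb^\Hh$ preserves the stratum, $\mm(q^{-1}\mu)$ stays in the half-space determined by $\Beta$, and equality holds precisely when the infinitesimal action of $q\Beta^+q^{-1}$ on $\mu$ vanishes, i.e.\ when $q\Beta^+q^{-1}\in\Der(\ggo)$. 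The main obstacle is accurately tracking the trace term that produces the mean curvature contribution $g(\mcv_g,\mcv_g)$ and reconciling the shift between the Lie-bracket moment map on $V$ and the Ricci endomorphism seen through the reductive decomposition $\ggo=\hg\oplus\mg$; all of these ingredients, however, are already worked out in \cite{GIT,BL17}, so the proof amounts to an organised citation of the relevant lemmas from those sources.
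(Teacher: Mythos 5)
Your proposal follows essentially the same route as the paper: $\Beta$ is the Kirwan--Ness stratum label of the Lie bracket, items (1) and (2) are quoted from \cite{GIT,BL17}, and item (3) is obtained by splitting $\Ricci_g$ into the moment-map part orthogonal to $\Der(\mu)$ plus the mean-curvature correction, whose trace against $\Beta^+$ cancels $g(\mcv_g,\mcv_g)$ precisely because $\ad_\mu \mcv_\mu \in \Der(\mu)\subset \slgb \perp \Beta$. The only caveat is that your justifications of (1) and (2) are heuristic rather than proofs (for instance, the full stratum is $\Gl(\ggo)$-invariant, so ``stabilising the stratum'' alone does not single out $\Slb$ --- one needs that $\Aut(\ggo)$ stabilises $\mu$ itself and hence kills the $\RR\Beta$-component), but since both you and the paper ultimately defer these points to the cited lemmas, this is not a genuine gap.
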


\begin{proof}
Let $\Beta$ be the stratum label corresponding to $\ggo$, see \cite[$\S$5]{BL17}.
Recall that for a symmetric endomorphism, the kernel is the orthogonal complement of the image. Thus, property 1.~follows from Lemma 5.1 in \cite{BL17}. Property 2.~is simply 	Corollary 4.11 in \cite{BL17}. Regarding 3.~, in the notation of \cite{BL17} we write $\Ricci_g = q \Ricci_\mu q^{-1}$, where $\mu = q^{-1} \cdot \lb$ and $\lb$ is the Lie bracket of $\ggo$. We may further decompose
\[
	\Ricci_\mu = \Ricci^*_\mu - S(\ad_\mu \mcv_\mu),
\]
with $\Ricci_\mu^* \perp \Der(\mu)$, and notice that $\mcv_\mu = q^{-1} \mcv_g$. Thus.
\[
	\tr \Ricci_g q \Beta^+ q^{-1}  + g(\mcv_g, \mcv_g) = \tr \Ricci_\mu^* \Beta^+ - \tr (\ad_\mu \mcv_\mu)\Beta^+ + \bar g (\mcv_\mu, \mcv_\mu).
\]
Since $\ad_\mu \mcv_\mu \in \Der(\mu) \subset \slg_\Beta \perp \Beta$, the second term equals $-\tr \ad_\mu \mcv_\mu = -\bar g(\mcv_\mu, \mcv_\mu)$ by definition of $\mcv_\mu$. Thus, the right-hand-side equals $\tr \Ricci_\mu^* \Beta^+$, which is non-negative by Lemma 6.2 in \cite{BL17}. Finally, the equality condition follows from $\Der(\mu) = q^{-1} \Der(\ggo) q$.
\end{proof}



\begin{lemma}\label{lem_LQbeta}
Let $S \in \End(\ggo)$ be a $\bar g$-symmetric endomorphism, and $Q \in \qgb$ be such that $S + Q \in \sog(\ggo, \bar g)$. Then, $\tr S [Q, \Beta] \geq 0$, with equality if and only if $[Q, \Beta] = 0$.
\end{lemma}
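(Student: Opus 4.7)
The plan is to expand everything according to the eigenspace decomposition of $\operatorname{ad}(\Beta)$ acting on $\End(\ggoo)$, using the bilinear form $(A,B) := \tr AB$. The key observation is that $\operatorname{ad}(\Beta)$ is antisymmetric with respect to $(\cdot,\cdot)$: for $A,B \in \End(\ggoo)$,
\[
    ([\Beta,A],B) = \tr(\Beta A B - A \Beta B) = -(A,[\Beta,B]),
\]
so eigenspaces $\End(\ggoo)_r,\End(\ggoo)_s$ are $(\cdot,\cdot)$-orthogonal unless $r+s=0$. Moreover, since $\Beta$ is $\bar g$-symmetric, taking transpose of $[\Beta,A_r]=rA_r$ gives $[\Beta,A_r^t]=-rA_r^t$, so $(\cdot)^t$ swaps $\End(\ggoo)_r$ and $\End(\ggoo)_{-r}$.

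First I would decompose $Q = \sum_{r\geq 0} Q_r$ with $Q_r \in \End(\ggoo)_r$, which is allowed because $Q \in \qgb = \ggoo_\Beta \oplus \ug_\Beta$. Then $Q^t = \sum_{r\geq 0} Q_r^t$ with $Q_r^t \in \End(\ggoo)_{-r}$. The condition $S+Q \in \sogg$ combined with $S^t = S$ yields $S = -\tfrac{1}{2}(Q+Q^t)$; projecting onto eigenspaces of $\operatorname{ad}(\Beta)$, we get $S_s = -\tfrac{1}{2}Q_s$ for $s>0$, $S_s = -\tfrac{1}{2}Q_{-s}^t$ for $s<0$, and $S_0 = -\tfrac{1}{2}(Q_0+Q_0^t)$.

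Next I would compute, using $[Q,\Beta] = -\operatorname{ad}(\Beta)Q = -\sum_{r\geq 0} r Q_r$ and the pairing property:
\[
    \tr S [Q,\Beta] = -\sum_{r\geq 0} r\,\tr(S_{-r} Q_r) = -\sum_{r>0} r\, \tr\!\bigl(-\tfrac{1}{2} Q_r^t\bigr) Q_r = \tfrac{1}{2} \sum_{r>0} r\, \tr(Q_r^t Q_r).
\]
Since each term $r\,\tr(Q_r^t Q_r) = r\|Q_r\|^2$ is non-negative, this gives $\tr S[Q,\Beta]\geq 0$. Equality forces $Q_r=0$ for every $r>0$, i.e.\ $Q\in\ggoo_\Beta = \ker \operatorname{ad}(\Beta)$, which is exactly $[Q,\Beta]=0$.

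I do not expect any serious obstacle: the whole argument is bookkeeping with the weight decomposition under $\operatorname{ad}(\Beta)$. The only thing to be careful about is the interplay of the transpose with the grading, which is why $\Beta$ being $\bar g$-symmetric matters, and the observation that the $r=0$ contribution drops out of $\tr S[Q,\Beta]$, so that only the strictly positive weights (which are exactly the obstruction to $[Q,\Beta]=0$) contribute.
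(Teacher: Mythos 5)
Your proof is correct and follows essentially the same route as the paper's: both decompose $S$ and $Q$ into $\ad(\Beta)$-eigencomponents, use that transposition swaps the $r$ and $-r$ weight spaces, extract $Q_r=-2S_r$ (equivalently $S=-\tfrac12(Q+Q^t)$) from $S+Q\in\sogg$, and reduce $\tr S[Q,\Beta]$ to a manifestly non-negative sum over positive weights. The only difference is cosmetic: you express the final sum as $\tfrac12\sum_{r>0} r\Vert Q_r\Vert^2$ while the paper writes $2\sum_{\lambda>0}\lambda\Vert S_\lambda\Vert^2$, which agree since $Q_r=-2S_r$.
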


\begin{proof}
 Since the map $E \mapsto -E^T$ is an involutive automorphism of the Lie algebra $\End(\ggo)$, the $\ad(\Beta)$-eigenspaces introduced above satisfy in addition $\End(\ggo)_\lambda^T = \End(\ggo)_{-\lambda}$ for all $\lambda \geq 0$. From this, it is clear that an endomorphism
 \[
 	E := E_0 + \sum_{\lambda>0} (E_\lambda + E_{-\lambda}), \qquad E_\lambda \in \End(\ggo)_\lambda \, \, \forall \lambda,
 \] 
 is symmetric if and only if $E_{-\lambda} = E_\lambda^T$ for all $\lambda \geq 0$, and it is skew-symmetric if and only if $E_{-\lambda} = - E_\lambda^T$ for all $\lambda \geq 0$.

By definition of $\qgb$, we may write $Q = \sum_{\lambda \geq 0} Q_\lambda$, with $Q_\lambda \in \End(\ggo)_\lambda$. Since $S$ is symmetric we may write it as $S= S_0 + \sum_{\lambda>0} (S_\lambda + S_\lambda^T)$. In view of the above observation, the condition $S+Q\in \sog(\ggo,\bar g)$ implies $Q_\lambda = - 2 S_\lambda$, for all $\lambda > 0$. Hence,
\[
	\tr S [Q , \Beta] = - \tr[\Beta, Q] S = - \sum_{\lambda >0} \lambda \tr Q_\lambda S = -\sum_{\lambda > 0} \lambda \la Q_\lambda, S \ra = 2 \, \sum_{\lambda>0} \lambda \, \Vert S_\lambda \Vert^2 \geq 0.
\]
Equality occurs if and only if $S_\lambda = 0$ for all $\lambda > 0$, that is, $[S, \Beta] = 0$. Moreover, since $Q_\lambda = - 2 S_\lambda$ for $\lambda >0$, this is also equivalent to $[Q,\Beta] = 0$.
\end{proof}

\end{appendix}

\bibliography{../bib/ramlaf2}
\bibliographystyle{amsalpha}

\end{document}